\renewcommand{\arraystretch}{1.5}
\newcommand{\supp}{\mathrm{supp}}
\newcommand{\Aut}{\mathrm{Aut}}
\newcommand{\proj}{\mathrm{proj}}
\newcommand{\0}{{\bf 0}}
\newcommand{\1}{{\bf 1}}
\theoremstyle{plain}
\newtheorem{theorem}{Theorem}[section]
\newtheorem{assumption*}{Hypothesis}
\newtheorem{theorem*}{Theorem}
\newtheorem{corollary*}{Corollary}
\newtheorem{corollary}[theorem]{Corollary}
\newtheorem{lemma}[theorem]{Lemma}
\newtheorem{proposition}[theorem]{Proposition}
\newtheorem*{claim*}{Claim}
\theoremstyle{definition}
\newtheorem{definition}[theorem]{Definition}
\newtheorem*{notation}{Notation}
\newtheorem{remark}[theorem]{Remark}
\renewcommand{\phi}{\varphi}
\renewcommand{\epsilon}{\varepsilon}
\newcommand\la{\langle}
\newcommand\ra{\rangle}
\newcommand\lla{\la\!\la}
\newcommand\rra{\ra\!\ra}
\setlist[enumerate,1]{label={\upshape\arabic*.}}
\setlist[enumerate,2]{label={\textup{(}\alph*\textup{)}}}
\title{Miyamoto groups of code algebras}
\author{Alonso Castillo-Ramirez\footnote{Departamento de Matematicas, Centro Universitario de Ciencias Exactas e Ingenierias, Universidad de Guadalajara, Mexico, email: alonso.castillor@academicos.udg.mx} \and Justin M\textsuperscript{c}Inroy\footnote{School of Mathematics, University of Bristol, Fry Building, Woodland Road, Bristol, BS8 1UG, UK, and the Heilbronn Institute for Mathematical Research, Bristol, UK, email: justin.mcinroy@bristol.ac.uk}}
\begin{document}
\maketitle

\begin{abstract}
A code algebra $A_C$ is a nonassociative commutative algebra defined via a binary linear code $C$.  In a previous paper, we classified when code algebras are $\mathbb{Z}_2$-graded axial (decomposition) algebras generated by small idempotents.  In this paper, for each algebra in our classification, we obtain the Miyamoto group associated to the grading.  We also show that the code algebra structure can be recovered from the axial decomposition algebra structure. \\ 

\textbf{MSC classes:} 20B25, 17A99, 17D99, 94B05, 17B69.
\end{abstract}

\section{Introduction}

Code algebras were introduced in \cite{codealgebras} as a new class of commutative nonassociative algebras defined from binary linear codes. They contain a family of pairwise orthogonal idempotents, called \emph{toral idempotents}, indexed by the length of the code, and have a nice Peirce decomposition relative to this family. Their definition was inspired by an axiomatic approach to code Vertex Operator Algebras \cite{DGH, M2, codesubVOA}, and, in particular, their connections with axial (decomposition) algebras were explored.

Strengthening the relevance of this axiomatisation, code algebras were shown in \cite{CE19} to have a striking resemblance to the coordinate algebras associated with the optimal short $\text{SL}_2^n$-structure of the simple Lie algebras of types $E_7$, $E_8$, and $F_4$.

In this paper, we continue to explore the links of code algebras with axial decomposition algebras \cite{Axial1}.  These are a relatively new class of commutative nonassociative algebras generated by semisimple idempotents called \emph{axes}. We have partial control over the multiplication by requiring that the eigenvectors for the adjoint action of an axis multiply according to a so-called \emph{fusion law}.  This gives us the key property that, when the fusion law is graded, we may naturally associate automorphisms, called \emph{Miyamoto automorphisms}, to each axis.  These in turn generate the \emph{Miyamoto group} which is a subgroup of the automorphism group.

Let $C \subseteq \mathbb{F}_2^n$ be a binary code of length $n$. A code algebra $A_C$ has a basis $\{t_i : i = 1, \dots, n \} \cup \{e^\alpha : \alpha \in C^* \}$, where $C^* := C  \setminus \{ \0, \1 \}$, and multiplication which mimics the code structure (see Definition \ref{CodeAlgebra} for details).  We call the $t_i$ \emph{toral idempotents} and the $e^\alpha$ \emph{codeword elements}.

In a code algebra $A_C$, the toral idempotents are not enough to generate the algebra. However, in \cite{codealgebras}, we give a construction, called the \emph{$s$-map}, to obtain idempotents of  $A_C$ from a subcode $D$ of $C$.  For the smallest possible subcode, $D = \langle \alpha \rangle$ where $\alpha \in C$, the $s$-map idempotents are called \emph{small idempotents}.  They are of the form
\[
e_{\alpha, \pm} := \lambda t_\alpha \pm \mu e^\alpha
\]
where $t_\alpha = \sum_{i \in \supp(\alpha)} t_i$ and $\lambda, \mu \in \mathbb{F}$.

In general, it is difficult to analyse the eigenvalues and eigenvectors, let alone give the fusion law for $s$-map idempotents. However, in \cite{CM19}, we do this for small idempotents. We showed that if $C$ is a projective code and $S$ is a set of codewords which generate $C$, then the small idempotents with respect to $S$ generate the algebra $A_C$.  We give the parts and evaluation map for the small idempotents explicitly and describe the fusion law in general.  Hence we show when $A_C$ is an axial decomposition algebra with respect to the small idempotents.  The main result in \cite{CM19} classifies when the fusion law for the small idempotents is $\mathbb{Z}_2$-graded under the assumption of the Axis Hypothesis (see page \pageref{axishyp}).  In particular, all $\alpha \in S$ have the same weight $|\alpha|$.

In contrast with \cite{CM19}, where we used the language of axial algebras, in this paper we use the slightly more general language of axial decomposition algebras: this allows us to treat more clearly the cases where different parts have the same eigenvalue. In the first part of this paper, we show that you can recover the code algebra structure from the axial decomposition algebra structure.  We say that a set $X$ of small idempotents is \emph{pair-closed} if whenever $e_{\alpha, +} \in X$, then $e_{\alpha, -} \in X$ and vice versa, for $\alpha \in S$.

\begin{theorem*}
Suppose that $A$ is a code algebra satisfying the Axis Hypothesis for some projective code $C = \la S \ra$ and $X$ be the pair-closed set $X$ of small idempotents.  Then, we can recover the code $C$, up to permutation equivalence, and also the special basis $\{t_i : i = 1, \dots, n \} \cup \{e^\beta : \beta \in  C^*\}$.
\end{theorem*}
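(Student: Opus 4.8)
The plan is to show that the algebra $A$, together with its set $X$ of small idempotents, carries enough combinatorial data to reconstruct first the toral idempotents $t_i$, then the codeword elements $e^\beta$, and finally the code $C$ as a subspace of $\mathbb{F}_2^n$ up to relabelling the coordinates. First I would use the classification from \cite{CM19}: under the Axis Hypothesis all $\alpha \in S$ have a common weight $w = |\alpha|$, and each pair $\{e_{\alpha, +}, e_{\alpha, -}\}$ in $X$ recovers the ``axis datum'' $\lambda t_\alpha \pm \mu e^\alpha$. From such a pair I can form $e_{\alpha, +} + e_{\alpha, -} = 2\lambda t_\alpha$ and $e_{\alpha, +} - e_{\alpha, -} = 2\mu e^\alpha$, so that up to the known scalars $\lambda, \mu$ (which are determined by $w$ and the code-algebra parameters fixed by the Axis Hypothesis) the subspaces $\langle t_\alpha\rangle$ and $\langle e^\alpha\rangle$ are intrinsic to the pair. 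Thus $X$ hands us, canonically, the vectors $t_\alpha = \sum_{i \in \supp(\alpha)} t_i$ for all $\alpha \in S$, and the codeword elements $e^\alpha$ for $\alpha \in S$.

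Next I would recover the individual toral idempotents $t_i$. The span $T := \langle t_\alpha : \alpha \in S\rangle$ is a subalgebra of $A$ isomorphic to $\mathbb{F}^n$ with the $t_i$ as its primitive (i.e. minimal) idempotents; since $S$ generates $C$ and hence spans a subspace whose coordinate projections separate all $n$ positions, the $t_\alpha$ span the whole of $\langle t_1, \dots, t_n\rangle$. So the $t_i$ are recovered abstractly as the primitive idempotents of $T$ — equivalently, as the connected components of the ``support graph'' whose vertices are the $n$ coordinates and whose hyperedges are the supports $\supp(\alpha)$, read off from linear dependence relations among the $t_\alpha$. This step simultaneously recovers $n$ and, for each $\alpha \in S$, its support $\supp(\alpha) \subseteq \{1, \dots, n\}$ as a subset of the recovered index set; these supports are exactly the codewords of $S$ viewed in $\mathbb{F}_2^n$, so $C = \langle S\rangle$ is recovered as a subspace, canonically up to the permutation of $\{1,\dots,n\}$ coming from the fact that the $t_i$ are only labelled up to reordering.

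For the remaining codeword elements $e^\beta$ with $\beta \in C^* \setminus S$, I would use the multiplication of $A$: write $\beta$ as a sum (in $\mathbb{F}_2$) of elements of $S$ and use the code-algebra product rules (Definition \ref{CodeAlgebra}) relating $e^\alpha e^{\alpha'}$ to $e^{\alpha + \alpha'}$ when $\supp(\alpha) \cap \supp(\alpha') = \emptyset$, together with the action of the $t_i$, to produce a nonzero scalar multiple of $e^\beta$ inside $A$; the projectivity of $C$ guarantees the relevant support-disjointness and nondegeneracy conditions needed for these products to be nonzero and for the scalar to be controlled. Proceeding along a spanning sequence for $C$ one obtains every $e^\beta$, so the full special basis $\{t_i\} \cup \{e^\beta : \beta \in C^*\}$ is in hand.

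The main obstacle, and the step that needs genuine care, is the recovery of the $t_i$ and the handling of scalar normalisations throughout: a priori the pair $\{e_{\alpha,+}, e_{\alpha,-}\}$ only pins down $t_\alpha$ and $e^\alpha$ up to the scalars $\lambda, \mu$, and in the degenerate cases (where distinct Peirce parts share an eigenvalue, which is precisely why we pass to axial decomposition algebras here) one must verify that the decomposition still isolates the ``toral'' direction $t_\alpha$ from the ``codeword'' direction $e^\alpha$ intrinsically. I expect this to follow from the explicit parts and evaluation maps given in \cite{CM19} — the $1$-eigenspace and the other Peirce components have different dimensions or different multiplication behaviour that distinguishes $t_\alpha$ from $e^\alpha$ — but it is where the argument has to engage with the fine structure rather than just the combinatorics of supports. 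Once that is settled, the rest is bookkeeping with the code-algebra product and the observation that nothing beyond a coordinate permutation is ever lost.
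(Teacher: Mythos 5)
There is a genuine gap, and it is the central one: you take the partition of $X$ into pairs $\{e_{\alpha,+},e_{\alpha,-}\}$ as given data (``each pair $\{e_{\alpha,+},e_{\alpha,-}\}$ in $X$ recovers the axis datum''), but $X$ is handed to you only as an unstructured set of axes of an axial decomposition algebra --- \emph{pair-closed} is a property of the set, not a labelling. Recovering which two axes belong to the same codeword is precisely the hard part of the theorem, and it is where the paper spends almost all of its effort. The natural test --- a genuine pair generates a $2$-dimensional subalgebra, since $e_{\alpha,+}e_{\alpha,-}=\lambda(2\lambda-1)t_\alpha$, while $e_{\alpha,\pm}$ together with an axis from a codeword $\beta\neq\alpha,\alpha^c$ generates something of dimension $>2$ because the product picks up a nonzero $e^{\alpha+\beta}$ term --- breaks down exactly when $\lambda=\tfrac12$ (i.e.\ $a=1/|\alpha|$), $\1\in C$ and $|\alpha|=n/2$: then $e_{\alpha,+}e_{\alpha,-}=0$ and also $e_{\alpha,\pm}e_{\alpha^c,\pm}=0$, so the true pair and the complementary pair are both mutually orthogonal and indistinguishable by subalgebra dimension. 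Resolving that case requires a further analysis of the eigenspaces (separating the $\lambda$-part from possible $p^\epsilon$-parts sharing the eigenvalue $\tfrac12$, and the $0$- and $(\lambda-\tfrac12)$-parts from possible $q^\iota$-parts in the $0$-eigenspace), which your proposal does not anticipate; the ``main obstacle'' you flag at the end (isolating $t_\alpha$ from $e^\alpha$ inside a known pair) is a different and much easier issue, since $e_{\alpha,+}+e_{\alpha,-}$ and $e_{\alpha,+}-e_{\alpha,-}$ do that immediately once the pair is in hand.

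A secondary point: your recovery of the individual $t_i$ from the $t_\alpha$ is not quite right as stated. The linear span $\langle t_\alpha:\alpha\in S\rangle$ need not contain the $t_i$ (this depends on the characteristic of $\mathbb{F}$), and it is not in general a subalgebra. The correct mechanism is multiplicative: $t_\alpha t_\beta=t_{\alpha\cap\beta}$, so supports intersect under multiplication, and projectivity of $C$ (minimum dual weight at least $3$) guarantees that every singleton $\{i\}$ arises as an intersection of supports of codewords in $S$; hence the subalgebra $\lla t_\alpha:\alpha\in S\rra$ equals $\lla t_1,\dots,t_n\rra$ and the $t_i$ are its unique set of pairwise annihilating idempotents. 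Your remaining steps (reading off $\supp(\alpha)$, hence $C$ up to coordinate permutation, and producing the remaining $e^\beta$ up to scalars as products $e^{\alpha_1}\cdots e^{\alpha_k}$) match the paper's argument and are fine.
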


In particular, since we know the special basis, we can also recover the parts of the fusion law, even if two different parts happen to have the same eigenvalue.

From the main classification result \cite[Theorem 6.1]{CM19}, we may split the code algebras with a $\mathbb{Z}_2$-graded fusion law up into three cases depending on the weight of $\alpha \in S$ (for a precise statement see Theorem \ref{Z2grading} in Section \ref{sec:codealgebras}):
\begin{enumerate}
\item $|\alpha| = 1$, $C = \mathbb{F}_2^n$, and
\begin{enumerate}
\item $n = 2$, $a=-1$.
\item $n=3$.
\end{enumerate}
\item  $|\alpha| = 2$ and $C = \bigoplus_{i=1}^r C_i$ is the direct sum of even weight codes all of the same length $m \geq 3$.

\item $|\alpha| > 2$
\end{enumerate}

In the second part of the paper, we give an explicit description of the Miyamoto automorphism associated to a small idempotent $e_{\alpha, \pm}$ and calculate the Miyamoto groups.

\begin{theorem*}
Suppose $C=\la S \ra$ is a projective code, $A_C$ is a code algebra satisfying the Axis Hypothesis, and $X$ is the set of pair-closed small idempotents so that $A$ is an axial decomposition algebra with a $\mathbb{Z}_2$-graded fusion law.  Then, the Miyamoto group $G$ of $A$ is
\begin{enumerate}
\item \begin{enumerate}
\item $G = S_3 \times S_3$.
\item $G = 2^2$.
\end{enumerate}
 \item $G = \begin{cases}
2^{r(m-1)} & \mbox{if $m$ is odd,}\\
2^{r(m-2)} & \mbox{if $m$ is even.}
\end{cases}$

\item $G$ is an elementary abelian $2$-group of order at most $2^{|S|}$.
\end{enumerate}
\end{theorem*}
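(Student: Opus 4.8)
The argument splits along the trichotomy of Theorem~\ref{Z2grading}, but the first step is common to all cases. Since the fusion law is $\mathbb{Z}_2$-graded, each axis $a = e_{\alpha,\pm}\in X$ carries a Miyamoto automorphism $\tau_a$: the linear involution fixing the $0$-graded part of the decomposition $A = A_0(a)\oplus A_1(a)$ and negating $A_1(a)$. Using decomposition algebras rather than axial algebras makes this well defined even when two parts with a common eigenvalue lie in different grades. By definition $G = \langle\,\tau_a : a\in X\,\rangle = \langle\,\tau_{\alpha,+},\tau_{\alpha,-} : \alpha\in S\,\rangle$, so the plan is to take the explicit parts of the small idempotents from \cite{CM19} (equivalently, from the proof of Theorem~\ref{Z2grading}), read off a basis of $A_1(e_{\alpha,\pm})$, and thereby write each $\tau_{\alpha,\pm}$ down as an explicit linear map on the special basis $\{t_i\}\cup\{e^\beta : \beta\in C^*\}$. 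I expect the outcome to be that $\tau_{\alpha,\pm}$ fixes every toral idempotent $t_i$ and acts on the $e^\beta$ monomially (up to sign, possibly permuting $e^\beta$ with other elements of its $\langle\alpha\rangle$-coset), with the precise shape differing according to whether $|\alpha| = 1$ or $|\alpha|\geq 2$.

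For $|\alpha|\geq 2$ (cases 2 and 3) I anticipate that $\tau_{\alpha,+}$ and $\tau_{\alpha,-}$ coincide --- call the common map $\tau_\alpha$ --- and act as the sign automorphism $e^\beta\mapsto(-1)^{\langle\alpha,\beta\rangle}e^\beta$; one checks directly that this is an automorphism of $A_C$, the only compatibility to verify being with products $e^\beta e^\gamma$ that contribute to the toral part, and when $\1\in C$ this is precisely what forces $|\alpha|$ even. Granting this, the $\tau_\alpha$ ($\alpha\in S$) pairwise commute, so $G$ is an elementary abelian $2$-group generated by $|S|$ involutions --- giving the bound $|G|\leq 2^{|S|}$ of case 3 --- and in fact $G$ sits inside the group of diagonal automorphisms (those fixing each $t_i$ and rescaling each $e^\beta$ by $\pm 1$) as the subgroup generated by the $\tau_\alpha$. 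In case 2, where $C = \bigoplus_{i=1}^r C_i$ with each $C_i$ the even weight code of length $m$ and each $\alpha\in S$ a weight-$2$ word supported in a single block, this subgroup is a direct product over the blocks (the contributions of distinct, coordinate-disjoint blocks being independent), and within a block the characters of the weight-$2$ words span the even-coordinate functionals; restricting these to $C_i$ introduces exactly one relation when the all-ones word of the block is itself an even combination, i.e.\ when $m$ is even. Hence a block contributes $m-1$ to $\dim_{\mathbb{F}_2}G$ when $m$ is odd and $m-2$ when $m$ is even, and summing over the $r$ blocks gives $|G| = 2^{r(m-1)}$ or $2^{r(m-2)}$. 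The substantive part of case 2 is the independence half of this count: ruling out relations among the $\tau_\alpha$ other than those coming from the $C_i^\perp$.

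Cases 1(a) and 1(b) are finite computations, in the $4$-dimensional algebra $A_{\mathbb{F}_2^2}$ with structure parameter $a = -1$ and in the $9$-dimensional algebra $A_{\mathbb{F}_2^3}$. Now $|\alpha| = 1$, so $\1\in C$ but $\langle\alpha,\1\rangle = 1$, which is exactly the obstruction to $\tau_{\alpha,\pm}$ being a sign map; instead $A_1(e_{\alpha,\pm})$ genuinely mixes toral and codeword elements, and $\tau_{\alpha,+}$, $\tau_{\alpha,-}$ need neither agree nor commute. Having the explicit maps in hand, in case 1(a) one checks that $\langle\tau_{\alpha,+},\tau_{\alpha,-}\rangle\cong S_3$ for each of the two generators $\alpha\in S$ --- the product $\tau_{\alpha,+}\tau_{\alpha,-}$ has order $3$, and this is where the value $a = -1$ enters --- that the two copies of $S_3$ centralise each other and intersect trivially, and hence $G\cong S_3\times S_3$; in case 1(b) the analogous computation with the three pairs $\tau_{e_i,\pm}$ shows the six involutions generate a Klein four-group, so $G\cong 2^2$. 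The main obstacle throughout is the first step: pinning down $\tau_{\alpha,\pm}$ correctly from the parts of \cite{CM19} in each regime, and in particular distinguishing the clean sign-automorphism behaviour at $|\alpha|\geq 2$ from the genuinely mixing behaviour at $|\alpha| = 1$ --- after which cases 2 and 3 reduce to the linear-algebra count above and case 1 to a bounded calculation.
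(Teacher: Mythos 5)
Your proposal follows essentially the same route as the paper: compute each Miyamoto involution explicitly on the special basis from the negatively graded parts, observe that for $|\alpha|\geq 2$ it fixes every $t_i$ and scales each $e^\beta$ by $\pm 1$ with $\tau_{e_{\alpha,+}}=\tau_{e_{\alpha,-}}$ (giving commuting involutions and the bound $2^{|S|}$ in case 3), carry out the kernel computation $C_i/(C_i\cap C_i^\perp)$ block by block in case 2, and finish cases 1(a), 1(b) by direct calculation in the small algebras. This is correct and matches the paper's argument, including the identification of case 1(a) as the one genuinely non-diagonal situation where $\langle\tau_{\alpha,+},\tau_{\alpha,-}\rangle\cong S_3$.

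Two of your anticipated explicit descriptions are off, though neither derails the plan since your stated method is to read the involutions off from the actual parts. First, for $|\alpha|>2$ the sign of $\tau_\alpha$ on $e^\beta$ is not $(-1)^{\langle\alpha,\beta\rangle}$ in general: by the description of $A_-$ in Theorem \ref{Z2grading}, $e^\beta$ is negated precisely when $|\alpha\cap\beta|\in wt(D)\setminus wt(D_+)$ for the given codimension-one subcode $D_+$ of $D=\proj_\alpha(C)$, and this functional need not be weight mod $2$ (e.g.\ it cannot be when $|\alpha|$ is odd, since $\1\in D_+$). Only in case 2, where $D=\mathbb{F}_2^2$ and $D_+=\{\0,\1\}$, does the sign reduce to $(-1)^{\langle\alpha,\beta\rangle}$ — which is what your counting argument actually uses, so case 2 is unaffected, and case 3 needs only diagonality, not the formula. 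Second, your claim that for $|\alpha|=1$ the involutions "genuinely mix toral and codeword elements" and that $\tau_{\alpha,+}\neq\tau_{\alpha,-}$ holds only in case 1(a); in case 1(b) the negative part is spanned by the $w^\pm_\beta=\theta^\pm_\beta e^\beta+e^{\alpha+\beta}$, so $\tau_{\alpha,+}=\tau_{\alpha,-}$ acts diagonally (fixing all $t_i$, $e^\alpha$, $e^{\alpha^c}$ and negating the remaining $e^\beta$), whence the relation $\tau_{\alpha_i}\tau_{\alpha_j}=\tau_{\alpha_k}$ and $G\cong 2^2$. Your finite computation would uncover both corrections, so I regard these as misforecasts rather than gaps.
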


From \cite{CM19}, in case $(2)$, where $|\alpha| = 2$, the algebra is in fact $\mathbb{Z}_2 \times \mathbb{Z}_2$-graded when the structure parameters are chosen in a sufficiently `nice' way (see Section \ref{sec:Z2} for details).

\begin{theorem*}
Suppose $|\alpha|=2$ and $C = \bigoplus_{i=1}^r C_i$ is the direct sum of even weight codes all of the same length $m \geq 3$.  When $A_C$ is an axial decomposition algebra with a $\mathbb{Z}_2 \times \mathbb{Z}_2$-graded fusion law, then $A_C$ has Miyamoto group
\[
G = \begin{cases}
\prod_{i = 1}^r 2^{m-1}:S_m & \mbox{if $m$ is odd}\\
\prod_{i = 1}^r 2^{m-2}:S_m & \mbox{if $m$ is even}
\end{cases}
\]
\end{theorem*}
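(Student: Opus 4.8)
The plan is to make both the Miyamoto automorphisms and the groups completely explicit: first set up two families of automorphisms of $A_C$, then identify the Miyamoto automorphism of each small axis with a member of these families using the part structure of the $\mathbb{Z}_2\times\mathbb{Z}_2$-graded fusion law from \cite{CM19} and Section~\ref{sec:Z2}, and finally compute the group they generate.

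\textbf{Two families of automorphisms.} For $\gamma\in\mathbb{F}_2^n$, let $\sigma_\gamma$ be the linear map of $A_C$ fixing every $t_i$ and sending $e^\beta\mapsto(-1)^{\la\gamma,\beta\ra}e^\beta$. A check against the code-algebra product (Definition~\ref{CodeAlgebra}) shows $\sigma_\gamma\in\Aut(A_C)$, that $\gamma\mapsto\sigma_\gamma$ is a homomorphism, and that its kernel is $C^\perp$; hence $\{\sigma_\gamma:\gamma\in C\}\cong C/(C\cap C^\perp)=\bigoplus_{i=1}^r C_i/(C_i\cap C_i^\perp)$. Since $C_i^\perp$ is the length-$m$ repetition code, which lies in $C_i$ precisely when $m$ is even, this group has order $2^{r(m-1)}$ for $m$ odd and $2^{r(m-2)}$ for $m$ even; write $V_i$ for the $i$-th direct factor. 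Secondly, a transposition $p=(j\ k)$ of two coordinates in the same block preserves $C$ and (by the description of the case~(2) algebras in \cite{CM19}, whose structure constants are invariant under permuting coordinates within a block) the multiplication of $A_C$, so $p\in\Aut(A_C)$; moreover $p\,\sigma_\gamma\,p^{-1}=\sigma_{p\gamma}$, and if $\alpha=e_j+e_k$ then $p$ commutes with $\sigma_\alpha$ and fixes both axes $e_{\alpha,\pm}$.

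\textbf{The Miyamoto automorphisms.} Fix $\alpha\in S$ with $\supp\alpha=\{j,k\}$, set $p=(j\ k)$, write $\tau_{a,\chi}$ for the Miyamoto automorphism of an axis $a$ and a character $\chi$ of $T=\mathbb{Z}_2\times\mathbb{Z}_2$, and name the characters of $T$ so that $\chi_{11}$ induces the $\mathbb{Z}_2$-grading of \cite{CM19}. Reading the parts off from \cite{CM19}, I would sort the basis of $A_C$ by $|\supp\beta\cap\{j,k\}|$: the $e^\beta$ with $|\supp\beta\cap\{j,k\}|=1$ occur in pairs $\{e^\beta,e^{p\beta}\}$ (here $\alpha+\beta=p\beta$), with eigenvectors $e^\beta\pm e^{p\beta}$ lying in the parts graded $(0,1)$ and $(1,0)$; the $e^\beta$ with $|\supp\beta\cap\{j,k\}|\in\{0,2\}$ occur in pairs $\{e^\beta,e^{\beta+\alpha}\}$ whose eigenvectors lie in the identity part; the $2$-dimensional subspace $\la t_j+t_k,e^\alpha\ra$ (containing $e_{\alpha,\pm}$) and all $t_\ell$, $\ell\neq j,k$, lie in the identity part; and $t_j-t_k$ is a $\lambda$-eigenvector in the part graded $(1,1)$. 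Then $\tau_{e_{\alpha,+},\chi_{11}}$ negates exactly the parts graded $(0,1)$ and $(1,0)$, hence negates exactly the $e^\beta$ with $|\supp\beta\cap\{j,k\}|=1$ and fixes the rest, so $\tau_{e_{\alpha,+},\chi_{11}}=\sigma_\alpha$; and comparing how $\chi_{10},\chi_{01}$ act on the parts with the action of $p$ (which interchanges $t_j,t_k$, fixes the other $t_\ell$, interchanges $e^\beta\leftrightarrow e^{p\beta}$ when $|\supp\beta\cap\{j,k\}|=1$, and fixes the remaining $e^\beta$) yields $\tau_{e_{\alpha,+},\chi_{10}}=p$ and $\tau_{e_{\alpha,+},\chi_{01}}=p\,\sigma_\alpha$; the same three automorphisms come from $e_{\alpha,-}$. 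Thus the pair $e_{\alpha,\pm}$ contributes $\la p,\sigma_\alpha\ra\cong 2^2$. I expect this identification to be the main obstacle: one must extract the $\mathbb{Z}_2\times\mathbb{Z}_2$-graded parts from \cite{CM19} precisely enough to see that the eigenvectors in the ``overlap one'' pairs are the symmetric combinations $e^\beta\pm e^{p\beta}$ — so that $\tau_{e_{\alpha,+},\chi_{10}}$ really is the coordinate transposition $p$ rather than $p$ followed by a rescaling — which is where the within-block symmetry of the structure constants enters, and one must check that this is not disturbed by codewords whose support meets several blocks.

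\textbf{The Miyamoto group.} By definition $G=\la\tau_{a,\chi}:a\in X\ra=\la p_{jk},\ \sigma_\alpha:\alpha=e_j+e_k\in S\ra$. A weight-two codeword of $\bigoplus_i C_i$ lies wholly in one block, so $S$ is the disjoint union of the $S\cap C_i$, and $C_i=\la S\cap C_i\ra$ is the even-weight code of the graph $\Gamma_i$ on the $m$ coordinates of block $i$ with edge set $S\cap C_i$; as the even-weight code of length $m$ is indecomposable, $\Gamma_i$ is connected, so the transpositions $p_{jk}$ along its edges generate the full symmetric group $S_m$ on block $i$, while $\la\sigma_\alpha:\alpha\in S\cap C_i\ra=\{\sigma_\gamma:\gamma\in C_i\}=V_i$. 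Generators attached to distinct blocks act on disjoint coordinate sets, hence commute, so $G=\prod_{i=1}^r G_i$ with $G_i=\la V_i,S_m\ra$. Finally $V_i\trianglelefteq G_i$ because $p\,\sigma_\gamma\,p^{-1}=\sigma_{p\gamma}\in V_i$, and $V_i\cap S_m=1$ since a nontrivial element of $V_i$ fixes every toral idempotent whereas a nontrivial coordinate permutation does not; hence $G_i=V_i:S_m$, which is $2^{m-1}:S_m$ for $m$ odd and $2^{m-2}:S_m$ for $m$ even. Therefore $G=\prod_{i=1}^r G_i$, as claimed. (When only the $\mathbb{Z}_2$-grading is used, only the $\tau_{e_{\alpha,\pm},\chi_{11}}=\sigma_\alpha$ remain, generating $\prod_i V_i$, recovering the $\mathbb{Z}_2$-graded result above.)
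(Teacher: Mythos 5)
Your proof is correct and follows essentially the same route as the paper: identify the three Miyamoto automorphisms attached to $e_{\alpha,\pm}$ with the sign map $\sigma_\alpha$, the coordinate transposition $p=(j\,k)$, and their product (using that $\theta_\beta^\pm=\pm1$ so the graded parts are spanned by $e^\beta\pm e^{\alpha+\beta}$), then show the transpositions generate $S_m$ per block, the sign maps generate a normal elementary abelian complement-free subgroup of order $2^{m-1}$ or $2^{m-2}$, and the blocks commute. The only cosmetic differences are that you compute the order of the normal $2$-subgroup directly as $C_i/(C_i\cap C_i^\perp)$ where the paper quotes its Corollary \ref{Nmod} (the same computation), and that the paper deduces regularity of the structure constants from $\tau_\alpha(\chi_+)$ being an automorphism rather than assuming it to show $p\in\Aut(A_C)$.
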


The structure of the paper is as follows.  We define axial decomposition algebras and describe some of their properties in Section \ref{sec:background}.  We also recall some properties of binary linear codes and recap the definition and main results on code algebras. In Section \ref{sec:params}, the recovery of the code algebra structure from the axial decomposition algebra structure is proved.  Finally, in Section \ref{sec:auts}, we explicitly describe the Miyamoto automorphisms and identify the Miyamoto groups in all cases.

\medskip
We thank the referee for several insightful comments.

\section{Background}\label{sec:background}

We begin by briefly describing axial decomposition algebras and how to obtain automorphisms from a grading.  We then give the definition of a code algebra and recall some results.

\subsection{Axial algebras}\label{sec:axial}
We review the basic definitions related to axial decomposition algebras \cite{decomp}, which are a more general version of axial algebras, see \cite{Axial1, axialstructure}.  Our definition will be a hybrid one.

\begin{definition}
A pair $\mathcal{F} := (\mathcal{F}, \star)$ is a \emph{fusion law} if $\mathcal{F}$ is a non-empty set and $\star \colon \mathcal{F} \times \mathcal{F} \to 2^{\mathcal{F}}$ is a symmetric map, where $2^{\mathcal{F}}$ denotes the power set of $\mathcal{F}$. 
\end{definition}

We assume that our fusion laws have a \emph{unit}, that is, an element $1$ such that $1\star x \subseteq \{x\}$ for all $x \in \mathcal{F}$.  We extend the operation $\star$ to arbitrary subsets $U, V \subseteq \mathcal{F}$ by $U \star V = \bigcup_{\lambda \in U, \mu \in V} \lambda \star \mu$.

Let $A$ be a commutative non-associative (i.e.\ not-necessarily-associative) algebra over $\mathbb{F}$. Let $X$ be a set of elements of $A$.  We write $\la X \ra$ for the vector space spanned by the elements in $X$ and $\lla X \rra$ for the subalgebra generated by $X$.

\begin{definition}\label{axialalgebra}
Let $(\mathcal{F}, \star)$ be a fusion law and $A$ a commutative non-associative algebra over $\mathbb{F}$. An element $a \in A$ is an \emph{$\mathcal{F}$-axis} if the following hold:
\begin{enumerate}
\item $a$ is an \emph{idempotent} (i.e.\ $a^2 = a$),
\item There exists a decomposition of the algebra
\[
A = \bigoplus_{{x} \in \mathcal{F}} A_{x}
\]
such that the algebra multiplication satisfies the fusion law.  That is,
\[
A_x A_y \subseteq A_{x \star y} := \bigoplus_{z \in x \star y} A_z
\]
for all $x,y \in \mathcal{F}$.
\end{enumerate}
If additionally there exists a map $\phi \colon \mathcal{F} \to \mathbb{F}$, which we call an \emph{evaluation map}, such that
\[
a v = \phi(x) v
\]
for all $v \in A_x$, then $a$ is an \emph{${(\mathcal{F}, \phi)}$-axis}.  Furthermore, if $A_{\phi^{-1}(1)} = \langle a \rangle$, then we say $a$ is \emph{primitive}.
\end{definition}

In this paper, we only ever consider axes with evaluation maps, that is $(\mathcal{F}, \phi)$-axes. The above definition is slightly more general than the definition from axial algebras. In the latter, the evaluation map $\phi$ is injective and so we just label the elements of the fusion law by eigenvalues. Our definition allows us to split eigenspaces into different parts and treat them separately.

For $x \in \mathcal{F}$, we call $A_x$ the \emph{$x$-part} of $A$ with respect to $a$. We allow a part $A_x$ to be empty.  Note that we use similar notation $A_\lambda$ for the $\lambda$-eigenspace of $a$. If we want to stress that the part, or eigenspace, is with respect to an axis $a$, we write $A_x(a)$, or $A_\lambda(a)$, but normally we will just write $A_x$.

\begin{definition}
A pair $A = (A, X)$ is an \emph{$\mathcal{F}$-decomposition algebra} if $A$ is a commutative non-associative algebra and $X$ is a set of $\mathcal{F}$-axes which generate $A$.  If $X$ is a set of $(\mathcal{F}, \phi)$-axes, then $A$ is an \emph{$(\mathcal{F}, \phi)$-axial decomposition algebra}.
\end{definition}

When the fusion law is clear from context we drop the $\mathcal{F}$ and $\phi$, and simply use the term \emph{axial decomposition algebra}.

The definition given above is more general than that of an axial algebra, since we allow splitting of eigenspaces via the evaluation map. However, it is not as general as the definition of an axial decomposition algebra introduced in \cite{decomp}. We require our algebra to be commutative, our axes to be idempotents and, most importantly, our axes to generate the algebra. This ensures that, for a given algebra, we cannot arbitrarily add or delete axes.  It also ensures that the Miyamoto group, which we are about to define, is closely associated with the algebra.

\begin{definition}
The fusion law $\mathcal{F}$ is \emph{$T$-graded}, where $T$ is a finite abelian group, if there exist a partition $\{ \mathcal{F}_t : t \in T \}$ of $\mathcal{F}$ such that for all $s,t \in T$,
\[
\mathcal{F}_s \star \mathcal{F}_t \subseteq \mathcal{F}_{st}
\]
\end{definition}
We allow the possibility that some part $\mathcal{F}_t$ is the empty set. Let $A$ be an algebra and $a \in A$ an $(\mathcal{F}, \phi)$-axis (we do not require that $A$ be an axial decomposition algebra).  If $\mathcal{F}$ is \emph{$T$-graded}, then the axis $a$ defines a \emph{$T$-grading} on $A$ where the $t$-graded subspace $A_t$ of $A$ is
\[
A_t = \bigoplus_{x \in \mathcal{F}_t } A_{x}(a)
\]
When $\mathcal{F}$ is $T$-graded we may define some automorphisms of the algebra in the following way.  Let $T^*$ denote the set of linear characters of $T$ (i.e., the set of all homomorphisms from $T$ to $\mathbb{F}^\times$). For an axis $a$ and $\chi\in T^*$, consider the map $\tau_a(\chi)\colon A\to A$ defined by the linear extension of
\[
u \mapsto \chi(t) u \qquad \mbox{for } u \in A_t(a).
\]
Since $A$ is $T$-graded, this map $\tau_a(\chi)$ is an automorphism of $A$, which we call a \emph{Miyamoto automorphism}. Furthermore, the map sending $\chi$ to $\tau_a(\chi)$ is a homomorphism from $T^*$ to $\Aut(A)$.

The subgroup $T_a := \langle \tau_a(\chi) : \chi \in T^* \rangle$ is called the \emph{axial subgroup} corresponding to $a$.
\begin{definition}
Let $X$ be a set of $(\mathcal{F}, \phi)$-axes.  The \emph{Miyamoto group with respect to $X$} is
\[
G(X) := \langle T_a : a \in X \rangle  \leq \Aut(A)
\]
\end{definition}

We are particularly interested in $\mathbb{Z}_2$-graded fusion laws. In this case, we identify $\mathbb{Z}_2$ with the group $\{ +, - \}$ equipped with the usual multiplication of signs.  When $\mathrm{char}(\mathbb{F}) \neq 2$, the sign character $\chi_{-1}$, where $\pm \mapsto \pm 1$, is the only non-trivial character.  We write $\tau_a := \tau_a(\chi_{-1})$ and call it the \emph{Miyamoto involution associated to $a$}.

A set of axes $X$ is called \emph{closed} if it is closed with respect to its Miyamoto group.  That is, $X = X^{G(X)}$. One can show that given any set of axes $X$, there is a unique smallest closed set $\bar{X} \supseteq X$, called the \emph{closure} of $X$, such that $\bar{X}$ is closed.  By \cite[Lemma 3.5]{axialstructure}, $\bar{X} = X^{G(X)}$ and $G(\bar{X}) = G(X)$.

\subsection{Codes}

A \emph{binary linear code} $C$ of \emph{length} $n$ and \emph{rank} $k$ is a $k$-dimensional subspace of $\mathbb{F}_2^n$ and we call elements of $C$ \emph{codewords}.  We will write $\0$ for the all zeroes codeword $(0, \dots, 0)$ and $\1$ for the all ones codeword $(1, \dots, 1)$.  Note that a code $C$ always contains $\0$, but does not necessarily contain $\1$.  If $\1 \in C$, then each codeword $\alpha \in C$ has a \emph{complement} $\alpha^c := \alpha +\1$.  Conversely, if $\alpha \in C$ has a complement, that is there exists a vector $\alpha^c \in C$ such that $\alpha+\alpha^c = \1$, then $\1 \in C$ and all codewords have complements.

We denote the \emph{support} of $\alpha \in \mathbb{F}_2^n$ by $\supp(\alpha) := \{ i = 1, \dots, n : \alpha_i = 1\}$ and say it has \emph{Hamming weight} $|\alpha| := |\supp(\alpha)|$.  We denote the set of weights in $C$ by $wt(C) := \{ |\alpha| : \alpha \in C \}$.  For two codewords $\alpha, \beta \in C$, we use the notation $\alpha \cap \beta := \supp(\alpha) \cap \supp(\beta)$.

Two codes $C$ and $D$ of length $n$ are \emph{permutation equivalent} if there exists a permutation $\sigma \in S_n$ such that $D = C^\sigma$, where $\sigma$ acts naturally on codewords by permuting their support.

The \emph{dual code} $C^\perp$ of a code $C$ is the set of $v \in \mathbb{F}_2^n$ such that $(v,C) = 0$ where $(\cdot, \cdot)$ is the usual dot product.  A binary linear code $C$ is \emph{projective} if the minimum weight of a codeword in its dual $C^\perp$ is at least three.

For a subset $S$ of $\{1, \dots, n\}$, we write $\proj_S\colon C \to \mathbb{F}_2^n$ for the usual projection map onto the subspace of $\mathbb{F}_2^n$ spanned by the standard basis vectors $e_i = (0, \dots, 0,1,0,\dots, 0)$ with $i \in S$.  Note that $\proj_S(C)$ is also a binary linear code.  In more code theoretic language, it is the code \emph{punctured at $[n] - S$}, where $[n]:=\{ 1,2, \dots, n \}$.  For $\alpha \in C$, we will abuse our notation and write $\proj_\alpha(C)$ for $\proj_{\supp(\alpha)}(C)$.

\subsection{Code algebras}\label{sec:codealgebras}

We write $C^* := C \setminus \{ \0, \1 \}$.  Throughout this section and for the rest of the paper, we will always assume that the field $\mathbb{F}$ is large enough to contain any square roots we need (we will indicate in the text which quadratic equations these come from).  In particular, this will only ever be a finite number and hence taking a finite field extension will suffice.

\begin{notation}
Note that throughout this paper, we will often write conditions involving $\1$, or $\alpha^c$.  We do not assume that $\1 \in C$, or complements exist, just that if they do, then the conditions must hold.
\end{notation}

\begin{definition}\label{CodeAlgebra}
Let $C \subseteq \mathbb{F}_2^n$ be a binary linear code of length $n$, $\mathbb{F}$ a field and $\Lambda \subseteq \mathbb{F}$ be a collection of structure parameters
\[
\Lambda := \left\{ a_{i,\alpha}, b_{\alpha,\beta}, c_{i,\alpha} \in \mathbb{F}  : i \in \supp(\alpha), \alpha, \beta \in C^*, \beta \neq \alpha, \alpha^c\right\}.
\]
The \emph{code algebra} $A_C(\Lambda)$ is the commutative algebra over $\mathbb{F}$ with basis
\[
\{ t_i : i = 1, \dots, n \} \cup \{ e^{\alpha} : \alpha \in C^* \},
\]
and multiplication given by
\begin{align*}
t_i \cdot t_j & = \delta_{i,j} t_i \\
t_i \cdot e^\alpha & = \begin{cases} 
a_{i,\alpha} \, e^\alpha & \text{if } \alpha_i = 1 \\
\mathrlap0\phantom{ \sum \limits_{i \in \supp(\alpha) }c_{i,\alpha} t_i} & \text{if } \alpha_i =0
\end{cases} \\
e^\alpha \cdot e^\beta & = \begin{cases}
b_{\alpha, \beta}\, e^{\alpha + \beta} & \text{if } \alpha \neq \beta, \beta^c \\
 \sum \limits_{i \in \supp(\alpha) }c_{i,\alpha} t_i & \text{if } \alpha = \beta  \\
0 & \text{if } \alpha = \beta^c
\end{cases}
\end{align*}
where $\delta_{i,j}$ is the Kronecker delta.
\end{definition}

A code algebra is \emph{non-degenerate} if all of its structure parameters are non-zero. We call the basis elements $t_i$ \emph{toral elements} and the $e^\alpha$ \emph{codeword elements}.  For $\alpha \in C^*$, we use the notation $t_\alpha$ for $\sum_{i \in \supp(\alpha)} t_i$.

Given a subcode $D$ of $C$, the $s$-map construction can be used for defining idempotents.  This was first given in \cite{codealgebras} and subsequently revised in \cite[Proposition 2.2]{CM19}.  In this paper, we will be interested in the so-called \emph{small idempotents} coming from this $s$-map construction. Fix $\alpha \in C^*$ and assume that $a_{\alpha} := a_{i,\alpha} = a_{j,\alpha}$ and $c_\alpha := c_{i,\alpha} = c_{j,\alpha}$, for all $ i,j \in \supp(\alpha)$.  Then for the subcode $D := \langle \alpha \rangle$, the $s$-map construction gives two idempotents
\[
e_{\alpha, \pm} := \lambda_{\alpha} t_\alpha \pm \mu_\alpha e^\alpha
\]
where $\lambda_{\alpha} := \frac{1}{2 a_{\alpha} |\alpha|}$ and $\mu_\alpha^2 = \frac{\lambda_{\alpha} - \lambda_{\alpha}^2}{c_\alpha}$.  In particular, we assume that the roots $\pm \mu_\alpha$ exist in $\mathbb{F}$.

The idempotents are particularly interesting when they lead to automorphisms of the algebra.  As described in Section \ref{sec:axial}, if the idempotents are in fact axes for some graded fusion law then they have naturally associated Miyamoto automorphisms.  The situation where the code algebra is an axial decomposition algebra and the fusion law is $\mathbb{Z}_2$-graded was characterised in \cite{CM19}. We shall briefly recap the assumptions we need for this below, referring the reader to \cite{CM19} for the full details.  Note that Definition \ref{intregular} and the Field Hypothesis below are new, but they just better organise the assumptions.

\begin{definition}
Given $\beta \in C^*$, we define the \emph{weight partition of $\beta$ with respect to $\alpha$} to be the unordered pair of integers
\[
p(\beta) :=\left( | \alpha \cap \beta|, | \alpha \cap (\alpha + \beta)| \right) = (\vert \alpha \cap \beta \vert, \vert \alpha \cap \beta^c \vert).\]
Note that $p(\beta) = p( \alpha + \beta) = p( \beta^c)$. For any pair of integers $p:= (a,b)$ such that $a+b = \vert \alpha \vert$, define
\[ C_\alpha(p) := \{ \beta \in C^*\setminus \{\alpha, \alpha^c\} : p(\beta) = p \}; \]
in other words, this is the set of all $\beta$ which give the weight partition $p$. We define
\[
P_\alpha := \{ p(\beta) : \beta \in C^* \setminus \{ \alpha, \alpha^c\} \}
\]
to be the set of all weight partitions with respect to $\alpha$.
\end{definition}

Since the different parts for an axis $e_{\alpha, \pm}$ will correspond to weight partitions $p \in P_\alpha$, we must restrict the structure constants on the weight partitions.

\begin{definition}\label{intregular}
Let $S$ be a set of codewords in $C$. A non-degenerate code algebra $A_C$ is \emph{$S$-intersection regular} if for all $\alpha, \alpha' \in S$, we have $P_\alpha = P_{\alpha'}$ and 
\begin{align*}
a  &:= a_{i, \beta} &&\mbox{for all } i \in \supp(\beta), \beta \in C^*\\
b_{\alpha, \beta} &= b_{\alpha, \gamma} &&\mbox{for all } \beta \in C_{\alpha}(p), \gamma \in C_{\alpha'}(p), p \in P_\alpha \\
b_{\alpha^c, \beta} &= b_{\alpha'^c, \gamma} &&\mbox{for all } \beta \in C_{\alpha}(p), \gamma \in C_{\alpha'}(p), p \in P_\alpha \\
c_\beta &:= c_{i, \beta} &&\mbox{for all } i \in \supp(\beta), \beta \in C^*
\end{align*}
\end{definition}

In particular, this implies that $|\alpha| = |\alpha'|$ for all $\alpha, \alpha' \in S$, since their set of weight partitions are the same.  Moreover, since $\lambda_\alpha = \frac{1}{2a_\alpha|\alpha|}$ does not depend on $\alpha$, for the rest of the paper, we will just write $\lambda$ for $\lambda_\alpha$.

From now on, suppose that $S$ is a set of codewords of $C$, $A_C$ is a non-degenerate $S$-intersection regular code algebra and $\alpha \in S$.

Since we want our idempotent to be an axis with an evaluation map, we need to take the field large enough for the eigenvalues and eigenvectors to exist.  To do this, we need to further define some scalars which will be the coefficients of the eigenvalues and eigenvectors. For $\beta \in C^* \setminus \{ \alpha, \alpha^c \}$, we define
\[
\xi_\beta := \frac{\lambda a}{2\mu_\alpha b_{\alpha, \beta}}(|\alpha| - 2|\alpha \cap \beta|) = \frac{1 }{4 \mu_\alpha b_{\alpha, \beta}} \left(1 - \frac{2 \vert \alpha \cap \beta\vert}{\vert \alpha \vert } \right)
\]
and let $\theta^\pm_\beta$ be the two roots of
\[
x^2 + 2\xi_\beta x -1 = 0.
\]
Note that since $A_C$ is $S$-intersection regular, the above definitions of $\xi_\beta$ and $\theta^\pm_\beta$ are independent of the choice of $\alpha$.  For the small idempotent $e_{\alpha, -}$, we can put $-\mu_\alpha$ instead of $\mu_\alpha$ in the above and see that $\xi_\beta$ and $\theta^\pm_\beta$ are both multiplied by $-1$.  Since this does not alter the Field Hypothesis below and will cancel later, we do not include it in the notation here.

\begin{assumption*}[Field Hypothesis]\label{fieldassumption}
Assume $\mathbb{F}$ is a field of characteristic $p \neq 2$ and not dividing $|\alpha|$, where $\alpha \in S$ \textup{(}allowing $p=0$\textup{)}, and such that $\xi_\beta^2 \neq -1$ for all $\beta \in C^* \setminus \{ \alpha, \alpha^c \}$.
\end{assumption*}

Since $\xi_\beta$ does not depend on $\alpha$, the above hypothesis is well-defined.  Note that the condition $\xi_\beta^2 \neq -1$ is equivalent to the two roots $\theta^\pm_\beta$ of the quadratic above being distinct.

The following will be our running assumption throughout the paper.

\begin{assumption*}[Axis Hypothesis]\label{axishyp}
Assume $A_C$ is an $S$-intersection regular code algebra over a field $\mathbb{F}$ satisfying the Field Hypothesis and such that $a \neq \frac{1}{2|\alpha|}, \frac{1}{3|\alpha|}$.
\end{assumption*}

Note that if $a=\frac{1}{2|\alpha|}$, then $\lambda = 1$ and $\mu = 0$ and so $e_{\alpha, \pm}$ is just a sum of toral idempotents.  If $a=\frac{1}{3|\alpha|}$, then \cite[Lemma 3.7]{CM19} implies that $e_{\alpha, \pm}$ is not an $(\mathcal{F}, \phi)$-axis for any evaluation map $\phi$ as its adjoint action is not semisimple.  Since we are interested in new axes with evaluations, we rule out both these cases in the Axis Hypothesis.

Finally, we define
\[
\nu^\pm_p := \tfrac{1}{4} + \mu_\alpha b_{\alpha, \beta} (\theta^\pm_\beta + \xi_\beta)
\]
for $p \in P_\alpha$, $\beta \in C_\alpha(p)$. Notice that this does not depend on $\alpha \in S$ and is the same for $e_{\alpha,+}$ and $e_{\alpha, -}$. We may now state the following.

\begin{theorem}[{\cite[Proposition 3.5, Theorem 4.1]{CM19}}]
Suppose $A_C$ is a code algebra satisfying the Axis Hypothesis with respect to $S = \{\alpha \}$.  Then, $e = e_{\alpha, +}$ is an axis with fusion law $\mathcal{F}$ given in Table $\ref{tab:small}$, parts given explicitly in Table $\ref{tab:esp}$ and evaluation map $\phi$ given by
\begin{align*}
x &\mapsto x && \mbox{where } x = 1, 0, \lambda, \lambda-\tfrac{1}{2}\\
p^\epsilon &\mapsto \nu^\epsilon_p && \mbox{where } \epsilon = \pm
\end{align*}
Moreover, it is a primitive axis if $\phi(p^\pm) = \nu^\pm_p \neq 1$ for all $p \in P_\alpha$.
\end{theorem}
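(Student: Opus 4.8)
The plan is to read the decomposition off directly from the block structure of $\ad_e$ on the standard basis of $A_C$ and then check the axis axioms by explicit computation; since the eigenvalues, eigenvectors and fusion law are all recorded in Tables \ref{tab:small} and \ref{tab:esp}, there is little to invent, only to verify. First I would confirm that $e = e_{\alpha,+}$ is an idempotent: expanding $e^2 = \lambda^2 t_\alpha^2 + 2\lambda\mu_\alpha\, t_\alpha e^\alpha + \mu_\alpha^2 (e^\alpha)^2$ and using $t_\alpha^2 = t_\alpha$, $t_\alpha e^\alpha = a|\alpha|\, e^\alpha$ and $(e^\alpha)^2 = c_\alpha t_\alpha$, the equation $e^2 = e$ collapses to exactly the two defining relations $\lambda = \tfrac{1}{2a|\alpha|}$ and $\mu_\alpha^2 = \tfrac{\lambda - \lambda^2}{c_\alpha}$.

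Next I would produce the eigenspace decomposition. The operator $\ad_e$ preserves the direct sum
\[
A_C = \la t_i : i \notin \supp(\alpha)\ra \;\oplus\; \bigl(\la t_i : i \in \supp(\alpha)\ra \oplus \la e^\alpha\ra\bigr) \;\oplus\; \bigoplus_{\{\beta,\,\alpha+\beta\}} \la e^\beta, e^{\alpha+\beta}\ra,
\]
the last sum running over the pairs with $\beta \in C^* \setminus \{\alpha, \alpha^c\}$ (each such pair spans a genuine $2$-space since $\alpha \neq \0$). On the first summand $\ad_e$ is zero. On the middle summand the vectors $t_i - t_j$ span an $(|\alpha|-1)$-dimensional $\lambda$-eigenspace, while on the plane $\la t_\alpha, e^\alpha\ra$ the matrix of $\ad_e$ has trace $\lambda + \tfrac12$ with eigenvalues $1$ (eigenvector $e$) and $\lambda - \tfrac12$; here the hypothesis $a \neq \tfrac{1}{3|\alpha|}$ is exactly what keeps these two eigenvalues distinct, hence $\ad_e$ semisimple on this plane. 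On a plane $\la e^\beta, e^{\alpha+\beta}\ra$, using $t_i e^\beta = a\, e^\beta$ for $i \in \supp(\alpha)\cap\supp(\beta)$, $e^\alpha e^\beta = b_{\alpha,\beta} e^{\alpha+\beta}$, the identity $|\alpha\cap(\alpha+\beta)| = |\alpha| - |\alpha\cap\beta|$, and $S$-intersection regularity (which forces $b_{\alpha,\beta} = b_{\alpha,\alpha+\beta}$ because $p(\beta) = p(\alpha+\beta)$), the matrix of $\ad_e$ is the symmetric one with diagonal entries $\lambda a|\alpha\cap\beta|$, $\lambda a|\alpha\cap\beta^c|$ and off-diagonal entry $\mu_\alpha b_{\alpha,\beta}$; its two eigenvectors are the $p^\pm$-parts listed in Table \ref{tab:esp}, and a short manipulation with $\tfrac{1}{\theta^\pm_\beta} = \theta^\pm_\beta + 2\xi_\beta$ (valid since $\theta^\pm_\beta$ solve $x^2 + 2\xi_\beta x - 1 = 0$) rewrites the corresponding eigenvalues as $\tfrac14 + \mu_\alpha b_{\alpha,\beta}(\theta^\pm_\beta + \xi_\beta) = \nu^\pm_p$. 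The Field Hypothesis $\xi_\beta^2 \neq -1$ guarantees $\theta^+_\beta \neq \theta^-_\beta$, so $\ad_e$ is semisimple on this plane as well. Collecting the blocks gives the parts of Table \ref{tab:esp} and the evaluation map as stated.

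For the fusion law I would verify, for every pair of parts $A_x$, $A_y$, that the products of the listed basis eigenvectors of $A_x$ and $A_y$ lie in $A_{x\star y}$ as prescribed by Table \ref{tab:small}. The toral--toral and toral--codeword products are immediate from $t_it_j = \delta_{ij}t_i$ and the rule for $t_i\cdot e^\beta$. The bulk is the codeword--codeword products: a product of an eigenvector in $\la e^\beta, e^{\alpha+\beta}\ra$ with one in $\la e^\gamma, e^{\alpha+\gamma}\ra$ is expanded via $e^\beta e^\gamma = b_{\beta,\gamma}e^{\beta+\gamma}$ (treating the degenerate cases $\beta+\gamma \in \{\0,\1,\alpha,\alpha^c\}$ separately), the outcome lies in the planes $\la e^{\beta+\gamma}, e^{\alpha+\beta+\gamma}\ra$ or in the toral part, and one checks that the coefficients of $e^{\beta+\gamma}$ and $e^{\alpha+\beta+\gamma}$ are the $\theta$-combinations needed to sit in the claimed parts, the weight partition $p(\beta+\gamma)$ dictating which part occurs and $S$-intersection regularity making the relevant structure constants agree within a partition. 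I expect this case analysis, organised by the value of $\beta+\gamma$, to be the main obstacle: it is finite but lengthy, and its delicacy is that the parts $p^+$ and $p^-$ (as well as $\lambda$, $\lambda - \tfrac12$ and $0$) may coincide as eigenvalues, which is precisely why the fusion law must be stated on $\mathcal{F}$ rather than on eigenvalues.

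Finally, for primitivity I observe that the $1$-eigenspace of $\ad_e$ is the direct sum of $\la e\ra$ with those parts $A_{p^\pm}$ for which $\nu^\pm_p = 1$: the parts of eigenvalue $0$, $\lambda$ and $\lambda - \tfrac12$ contribute nothing, since $\lambda \neq 1$ and $\lambda - \tfrac12 \neq 1$ follow from $a \neq \tfrac{1}{2|\alpha|}$ and $a \neq \tfrac{1}{3|\alpha|}$. Hence $A_{\phi^{-1}(1)} = \la e\ra$ precisely when $\nu^\pm_p \neq 1$ for all $p \in P_\alpha$, which is the stated criterion.
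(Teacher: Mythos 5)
This theorem is quoted from \cite[Proposition 3.5, Theorem 4.1]{CM19}; the present paper supplies no proof of it, so there is no in-paper argument to compare against, and your outline is the natural direct verification that the cited source carries out. Your computations are correct where you make them explicit: the idempotency calculation, the $\ad_e$-invariant block decomposition, the $2\times 2$ matrix on $\la e^\beta, e^{\alpha+\beta}\ra$ with the quadratic $x^2+2\xi_\beta x-1=0$ producing $w^\pm_\beta$ and the eigenvalues $\nu^\pm_p$, the role of $a\neq\frac{1}{3|\alpha|}$ and of $\xi_\beta^2\neq-1$ in semisimplicity, and the primitivity criterion. Two small points. First, your stated direct sum omits the basis vector $e^{\alpha^c}$ (present whenever $\1\in C$): it lies in none of your three summands, but it belongs to the $0$-part, since $t_ie^{\alpha^c}=0$ for $i\in\supp(\alpha)$ and $e^\alpha e^{\alpha^c}=0$; Table \ref{tab:esp} lists it explicitly, so your decomposition needs this extra summand to exhaust $A_C$. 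Second, the fusion-law verification --- which is the genuinely long part of the proof in \cite{CM19} --- remains a sketch in your write-up, though the organisation you propose (expanding products of the $w^\pm$ via $e^\beta e^\gamma=b_{\beta,\gamma}e^{\beta+\gamma}$, splitting off the degenerate values of $\beta+\gamma$, and letting $p(\beta+\gamma)$ together with $S$-intersection regularity dictate the target part) is the correct one, and you rightly identify why the law must be stated on the label set $\mathcal{F}$ rather than on eigenvalues.
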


In the above theorem, the parts in Table $\ref{tab:esp}$ are for $e_{\alpha, +}$; for the analogous result for $e_{\alpha,-}$ simply replace $\mu$ by $-\mu$.

Note that in the fusion law, $p^+$ and $p^-$, for $p \in P_\alpha$, are separate elements, but since they behave in a very similar way, we abuse notation and join their columns in Table \ref{tab:small} in a single colonnade $p^\pm$.

\begin{table}[!htb]
\setlength{\tabcolsep}{4pt}
\renewcommand{\arraystretch}{1.5}
\centering
\begin{tabular}{c|ccccccc}
 & $1$ & $0$ & $\lambda$ & $\lambda-\frac{1}{2}$ & $p_1^\pm$ &  $\dots$ & $p_k^\pm$ \\ \hline
$1$ & $1$ &  & $\lambda$ & $\lambda-\frac{1}{2}$ & $p_1^\pm$ &  $\dots$ & $p_k^\pm$ \\
$0$ &  & $0$ &  &  & $p_1^\pm$ &  $\dots$ & $p_k^\pm$ \\
$\lambda$ & $\lambda$ &  & $1, \lambda, \lambda-\frac{1}{2}$ &  & $p_1^+, p_1^-$  & \dots & $p_k^+, p_k^-$ \\
$\lambda-\frac{1}{2}$ & $\lambda-\frac{1}{2}$&  &  & $1, \lambda-\frac{1}{2}$ & $p_1^+, p_1^-$ & \dots & $p_k^+, p_k^-$ \\
$p_1^\pm$ & $p_1^\pm$ & $p_1^\pm$  & $p_1^+, p_1^-$ & $p_1^+, p_1^-$  & $X_1$ && $N(p_1, p_k)$ \\
$\vdots$ & $\vdots$ &$\vdots$ &$\vdots$ &$\vdots$ & & $\ddots$ &\\
$p_k^\pm$ & $p_k^\pm$ & $p_k^\pm$  & $p_k^+, p_k^-$ & $p_k^+, p_k^-$ & $N(p_k, p_1)$ & & $X_k$
\end{tabular}
\vspace{5pt}

where
\[
N(p,q) := \{ p(\beta + \gamma)^+, p(\beta + \gamma)^- : \beta \in C_\alpha'(p), \gamma \in C_\alpha'(q), \gamma \neq \beta, \alpha+\beta, \beta^c, \alpha+\beta^c \}
\]

\vspace{5pt}
and $X_i$ represents the table

\vspace{5pt}
\begin{tabular}{c|cc}
& $p_i^+$ & $p_i^-$ \\
\hline
$p_i^+$ & $1,0,\lambda, \lambda-\frac{1}{2}, N(p_i, p_i)$ & $1,0,\lambda, \lambda-\frac{1}{2}, N(p_i, p_i)$ \\
$p_i^-$ & $1,0,\lambda, \lambda-\frac{1}{2}, N(p_i, p_i)$ & $1,0,\lambda, \lambda-\frac{1}{2}, N(p_i, p_i)$
\end{tabular}

\caption{Fusion law $\mathcal{F}$ for small idempotents}\label{tab:small}
\end{table}

\begin{table}[!htb]
\setlength{\tabcolsep}{4pt}
\renewcommand{\arraystretch}{1.5}
\centering
\begin{tabular}{l|l@{\hskip 15pt}l}
Part & Basis \\
\hline
$1$ & $e = \lambda t_\alpha + \mu_\alpha e^\alpha$  \\
\hline
\multirow{2}{4em}{$0$} &  $t_i$ & for $i \not \in \supp(\alpha)$ \\
& $e^{\alpha^c}$ \\
\hline
$\lambda$ & $t_j - t_k$ & for $k \in \supp(\alpha)$, $k \neq j := \min(\supp(\alpha))$ \\
\hline
$\lambda - \tfrac{1}{2}$ & $2\mu_\alpha c_\alpha t_\alpha - e^\alpha$ \\
\hline
$p^\pm$ & $w^\pm_\beta = \theta^\pm_\beta e^\beta + e^{\alpha + \beta}$ & for $\beta \in C_\alpha'(p)$, $p \in P_\alpha$
\end{tabular}
\caption{Parts for small idempotents}\label{tab:esp}
\end{table}

Observe that the fusion law depends only on the weight partition $P_\alpha$ of $\alpha$ (even though the parts depend on the value of $\mu_\alpha$ and therefore on $c_\alpha$).  However, the evaluation map and the eigenvalues do depend on $c_\alpha$.

In \cite[Theorem 5.1]{CM19}, we implicitly assumed that $c_{\alpha} = c_{\alpha'}$ for all $\alpha, \alpha' \in S$. However, two evaluation maps can be the same for different values of $c_\alpha$, so in the theorem below we just assume that all the evaluation maps are the same.  In the next section, we show that, in all but one small case where $C \neq \mathbb{F}_2$, we must indeed have $c = c_{\alpha}$, for all $\alpha\in S$, for the fusion law to be $\mathbb{Z}_2$-graded.

\begin{theorem}[{\cite[Theorem 5.1]{CM19}}]\label{codeisaxialalg}
Suppose $C = \la S \ra$ is a projective code and $A_C$ is a code algebra satisfying the Axis Hypothesis for $S$.  Let $X$ be the set of small idempotents with respect to $S$.  Then $A_C(\Lambda)$ is an $\mathcal{F}$-decomposition algebra with fusion law $\mathcal{F}$ given in Table $\ref{tab:small}$.

If in addition all the evaluation maps $\phi_e$, for $e \in X$, are the same, then $A_C(\Lambda)$ is an $(\mathcal{F}, \phi)$-axial decomposition algebra.
\end{theorem}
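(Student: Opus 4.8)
The plan is to prove Theorem \ref{codeisaxialalg} by leveraging the single-axis result already stated just before the theorem (from \cite[Proposition 3.5, Theorem 4.1]{CM19}), and the structural fact that the fusion law depends only on the weight partition set $P_\alpha$, which is constant across $S$ by $S$-intersection regularity. First I would observe that, fixing $\alpha \in S$, the cited single-axis theorem already gives that $e_{\alpha,+}$ (and symmetrically $e_{\alpha,-}$, replacing $\mu_\alpha$ by $-\mu_\alpha$) is an $\mathcal{F}$-axis with the parts listed in Table \ref{tab:esp} and the stated evaluation map. So each element of $X$ is individually an $\mathcal{F}$-axis, and the only thing that requires work is that $X$ \emph{generates} $A_C$ as an algebra --- this is the content of an $\mathcal{F}$-decomposition algebra that goes beyond having a bunch of axes lying around.

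The generation step is where the hypotheses ``$C = \la S \ra$ projective'' enter, and it is the main obstacle. I would argue it in stages. From the parts in Table \ref{tab:esp} applied to a single $e_{\alpha,\pm}$ with $\alpha \in S$: the part $1$ contains $\lambda t_\alpha + \mu_\alpha e^\alpha$, and the part $\lambda - \tfrac12$ contains $2\mu_\alpha c_\alpha t_\alpha - e^\alpha$; since $\lambda, \mu_\alpha, c_\alpha \neq 0$ (non-degeneracy and the Axis Hypothesis ensuring $\lambda \neq 1$, hence $\mu_\alpha \neq 0$), these two vectors are linearly independent and span the same space as $\{t_\alpha, e^\alpha\}$. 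Thus $\lla e_{\alpha,+}, e_{\alpha,-}\rra \ni t_\alpha$ and $e^\alpha$ for every $\alpha \in S$. Next, projectivity of $C$ is used to recover the individual toral idempotents $t_i$: I would use that $C = \la S \ra$ projective means the supports of codewords in $S$ (together with the $\lambda$-part vectors $t_j - t_k$ for $j,k \in \supp(\alpha)$) pin down each $t_i$; more precisely, projectivity is exactly the condition that guarantees no two coordinates $i \neq j$ lie in precisely the same set of codeword supports, so a suitable combination of the $t_\alpha$'s and the differences $t_j - t_k$ isolates each $t_i$. Having all $t_i$ and, for $\alpha \in S$, all $e^\alpha$, I then generate the remaining $e^\beta$ for $\beta \in C^* \setminus S$: since $S$ spans $C$, write $\beta = \alpha_1 + \dots + \alpha_\ell$ with $\alpha_j \in S$, and use the multiplication $e^{\gamma} \cdot e^{\delta} = b_{\gamma,\delta} e^{\gamma+\delta}$ inductively, where at each step the coefficient $b_{\gamma,\delta}$ is nonzero by non-degeneracy (and one checks the degenerate cases $\gamma = \delta$ or $\gamma = \delta^c$ can be routed around by reordering the sum, using that $\beta \neq \0, \1$). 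This shows $\lla X \rra = A_C$, giving the $\mathcal{F}$-decomposition algebra statement.

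For the second assertion, once all evaluation maps $\phi_e$ ($e \in X$) coincide as maps $\mathcal{F} \to \mathbb{F}$, the definition of an $(\mathcal{F}, \phi)$-axial decomposition algebra is satisfied essentially by definition: $X$ is a generating set of $(\mathcal{F}, \phi)$-axes for this common $\phi$. The only subtlety worth a sentence is that the common value of $\phi$ on the parts $1, 0, \lambda, \lambda - \tfrac12$ is forced (these are $1, 0, \lambda, \lambda - \tfrac12$ regardless), while the values $\nu^\pm_p$ on $p^\pm$ are the same for $e_{\alpha,+}$ and $e_{\alpha,-}$ and, by $S$-intersection regularity, independent of $\alpha$ --- so the hypothesis ``all $\phi_e$ equal'' is really only a constraint relating different values of $c_\alpha$ across $\alpha \in S$, and under it the single map $\phi$ works uniformly.

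I expect the generation argument, specifically the use of projectivity to extract the individual $t_i$ from the $t_\alpha$'s and the $\lambda$-parts, to be the delicate point; everything else is bookkeeping with the explicit parts in Table \ref{tab:esp} and the multiplication rules in Definition \ref{CodeAlgebra}. One should double-check that the excluded values of $a$ in the Axis Hypothesis are exactly what is needed for $\mu_\alpha \neq 0$ and for the $\lambda - \tfrac12$ part vector to be independent of the part-$1$ vector, so that the span recovery of $\{t_\alpha, e^\alpha\}$ genuinely goes through.
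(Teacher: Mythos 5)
Your proposal cannot be checked against an in-paper argument: Theorem \ref{codeisaxialalg} is recalled from \cite[Theorem 5.1]{CM19} and this paper gives no proof of it. Judged on its own terms, your outline is sound and its key step (generation) closely parallels the argument the paper \emph{does} give later, in the Proposition of Section \ref{sec:params}: recover $t_\alpha$ and $e^\alpha$ from each pair of small idempotents, use projectivity to isolate the individual $t_i$, and build the remaining $e^\beta$ by multiplying codeword elements, with the $\gamma=\delta$ and $\gamma=\delta^c$ degeneracies avoided by reordering (your observation that at most one summand $\alpha_j$ can satisfy $\beta=\alpha_j^c$ makes this rigorous). The second assertion is indeed definitional once the $\phi_e$ agree, and you correctly locate where the Axis Hypothesis enters ($a\neq\frac{1}{2|\alpha|}$ forces $\mu_\alpha\neq 0$).

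Two small corrections. First, you repeatedly reach for \emph{part} vectors --- $2\mu_\alpha c_\alpha t_\alpha - e^\alpha$ and the $\lambda$-part vectors $t_j-t_k$ --- to show elements lie in $\lla X\rra$. Parts are eigenspace summands of the ambient algebra relative to an axis; they are not a priori contained in the subalgebra generated by $X$, so this is circular as written. The fix is immediate and is what the paper uses: $e_{\alpha,+}+e_{\alpha,-}=2\lambda t_\alpha$ and $e_{\alpha,+}-e_{\alpha,-}=2\mu_\alpha e^\alpha$ already put $t_\alpha$ and $e^\alpha$ in $\lla X\rra$ (this is where you genuinely need both signs to belong to $X$). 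Second, in the projectivity step, intersections of supports of codewords in $S$ alone need not isolate a single coordinate (e.g.\ $S=\{110,011\}$ in $\mathbb{F}_2^3$ is projective but $\bigcap$ over the $\alpha\in S$ containing coordinate $1$ is $\{1,2\}$); you must use that $\lla t_\alpha:\alpha\in S\rra$ is closed under both products ($t_\alpha t_\beta=t_{\alpha\cap\beta}$) and linear combinations, so it contains $t_J$ for every set $J$ in the Boolean ring generated by the supports. Projectivity (no weight-one or weight-two dual words) says exactly that the atoms of this ring are the singletons $\{i\}$, which is the statement you want.
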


In this paper we are interested in calculating the associated Miyamoto groups.  These are non-trivial if and only if the fusion law is graded. Recall that the weight set of a code $D$ is the set of weights $wt(D) = \{ |\beta| : \beta \in D \}$.

\begin{theorem}[{\cite[Theorem 6.1]{CM19}}]\label{Z2grading}
Assume the Hypotheses of Theorem $\ref{codeisaxialalg}$ and that the evaluation maps are the same. Let $\alpha \in S$ and define $D = D(\alpha) := \proj_\alpha(C)$. Then the fusion law of $e_{\alpha, \pm} \in X$ is $\mathbb{Z}_2$-graded if and only if
\begin{enumerate}
\item $|\alpha| = 1$, $C = \mathbb{F}_2^n$, and
\begin{enumerate}
\item $n = 2$, $a=-1$.
\item $n=3$.
\end{enumerate}

\item  $|\alpha| = 2$ and $C = \bigoplus C_i$ is the direct sum of even weight codes all of the same length $m \geq 3$.

\item $|\alpha| > 2$ where $D = D(\alpha)$ is a projective code, $\1 \in D$ and there exists a codimension one linear subcode $D_+$ of $D$ such that $ \1 \in D_+$ and $D_+$ is the union of weight sets of $D$.
\end{enumerate}
In cases $(2)$ and $(3)$, we have
\begin{align*}
A_+ &= A_1 \oplus A_0 \oplus A_\lambda \oplus A_{\lambda - \frac{1}{2}} \oplus
\bigoplus_{m \in wt(D_+)} A_{(m, |\alpha|- m)^\pm} \\
A_- &= \bigoplus_{m \in wt(D) - wt(D_+)} A_{(m, |\alpha|- m)^\pm}
\end{align*}
where in case $(2)$, $D = D(\alpha) = \mathbb{F}_2^2$ and $D_+ = \{ \0, \1 \}$, for all $\alpha \in S$.
\end{theorem}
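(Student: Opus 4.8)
The plan is to read the $\mathbb{Z}_2$-grading off the fusion law of Table \ref{tab:small} as a purely combinatorial condition, and then translate it into the structure of the projected code $D = \proj_\alpha(C)$. A $\mathbb{Z}_2$-grading is a partition $\mathcal{F} = \mathcal{F}_+ \sqcup \mathcal{F}_-$ with $\mathcal{F}_s \star \mathcal{F}_t \subseteq \mathcal{F}_{st}$; equivalently, $\mathcal{F}_+$ is a unital $\star$-closed subset with $\mathcal{F}_+ \star \mathcal{F}_- \subseteq \mathcal{F}_-$ and $\mathcal{F}_- \star \mathcal{F}_- \subseteq \mathcal{F}_+$. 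First I would pin down the forced elements. Each of $0$, $\lambda$, $\lambda - \tfrac12$ appears in its own square in Table \ref{tab:small} (e.g.\ $\lambda \star \lambda \ni \lambda$), so none can lie in $\mathcal{F}_-$; hence $\{1, 0, \lambda, \lambda - \tfrac12\} \subseteq \mathcal{F}_+$ in every grading. Next, the rows $\lambda$ and $\lambda - \tfrac12$ send each $p^{\pm}$ to the \emph{full} set $\{p^+, p^-\}$, so $p^+$ and $p^-$ must always receive the same sign. Thus a $\mathbb{Z}_2$-grading is exactly a function $\sigma\colon P_\alpha \to \mathbb{Z}_2$, where $\sigma(p)$ is the common sign of $p^{\pm}$.

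The remaining constraints come from the products $p^\pm \star q^\pm$, which by Table \ref{tab:small} land in $N(p,q)$ (and, on the diagonal, additionally in the fixed set $\{1,0,\lambda,\lambda-\tfrac12\} \subseteq \mathcal{F}_+$). Unwinding the definition of $N(p,q)$, the grading condition becomes
\[
\sigma\bigl(p(\beta+\gamma)\bigr) = \sigma\bigl(p(\beta)\bigr)\,\sigma\bigl(p(\gamma)\bigr)
\]
for all admissible $\beta \in C_\alpha(p)$, $\gamma \in C_\alpha(q)$ subject to the exclusions in $N$. Since $p(\beta)$ depends only on $|\proj_\alpha(\beta)|$, I would push everything through $\pi := \proj_\alpha$: weight partitions correspond to weights of $D = \pi(C)$ under the folding $m \leftrightarrow |\alpha|-m$, and $\pi(\beta+\gamma) = \pi(\beta)+\pi(\gamma)$. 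Setting $D_+ := \{ d \in D : \sigma(|d|) = +\}$, the displayed identity says precisely that $D_+$ is closed under addition and is a union of weight classes of $D$; with $\0 \in D_+$ this makes $D_+$ a linear subcode, a union of weight sets, of index at most two. The one subtlety is that $p(\beta)=p(\beta^c)$ forces $\sigma(|d|)=\sigma(|\alpha|-|d|)$ automatically; so if $\1 \in D$, applying multiplicativity to $d$ and $\1$ gives $\sigma(|\alpha|-|d|)=\sigma(\1)\sigma(|d|)$, and comparing with the folding yields $\sigma(\1)=+$, i.e.\ $\1 \in D_+$. This is the origin of the condition $\1 \in D_+$ in case $(3)$.

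For the converse and the explicit cases I would split on $|\alpha|$. When $|\alpha|>2$, projectivity of $C$ ensures $D$ is projective and that enough admissible pairs $\beta,\gamma$ exist so that \emph{every} sum $\pi(\beta)+\pi(\gamma)$ is realised; this upgrades "$D_+$ closed under the realised sums" to "$D_+$ a genuine codimension-one linear subcode", giving case $(3)$, and conversely any such $D_+$ defines a valid $\sigma$. When $|\alpha|=2$, $D$ has length two, so $P_\alpha \subseteq \{(1,1),(0,2)\}$ and the only nontrivial option is $D=\mathbb{F}_2^2$ with $D_+ = \{\0,\1\}$ the even-weight subcode; requiring this simultaneously for every $\alpha \in S$ forces the global splitting $C = \bigoplus_i C_i$ into even-weight codes of equal length, which is case $(2)$. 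When $|\alpha|=1$ we have $|P_\alpha|\le 1$, so a nontrivial grading can only occur when the diagonal set $N(p,p)$ is empty (no admissible product codewords), allowing $p^\pm \in \mathcal{F}_-$; a short direct check of which projective $C=\mathbb{F}_2^n$ satisfy this under the Axis Hypothesis isolates exactly $n=2$ with $a=-1$ and $n=3$, which is case $(1)$. Finally, the formulas for $A_\pm$ in cases $(2)$ and $(3)$ follow by reading Table \ref{tab:esp}: the parts $A_1, A_0, A_\lambda, A_{\lambda-\frac12}$ together with those $A_{p^\pm}$ having $\sigma(p)=+$ assemble into $A_+$, the rest into $A_-$, matching $wt(D_+)$ against $wt(D)-wt(D_+)$.

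I expect the main obstacle to be the realisability step inside the case analysis: showing that the grading identity on the \emph{realised} sums $\pi(\beta)+\pi(\gamma)$ is strong enough to force $D_+$ to be linear of codimension one. This requires projectivity to supply, for each pair of target weights, admissible codewords $\beta,\gamma$ avoiding the exclusions $\gamma \neq \beta, \alpha+\beta, \beta^c, \alpha+\beta^c$ whose projections add to the required element; and it is exactly here that the small-$|\alpha|$ degeneracies (empty $N(p,p)$, and the forced even-weight splitting) must be separated out and checked by hand rather than through the generic argument.
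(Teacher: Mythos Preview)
First, note that this paper does not itself prove Theorem~\ref{Z2grading}; it is quoted verbatim from \cite[Theorem 6.1]{CM19} as background, so there is no in-paper proof to compare against. That said, your outline follows the natural strategy and is essentially sound for cases~(2) and~(3), but it has a genuine gap in case~(1), specifically~(1a).

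You read Table~\ref{tab:small} literally and argue that $(\lambda-\tfrac12)\star(\lambda-\tfrac12)\ni\lambda-\tfrac12$, forcing $\lambda-\tfrac12\in\mathcal F_+$ in every grading. But Table~\ref{tab:small} records only the \emph{generic} fusion law, an upper bound valid for all admissible structure constants; for specific parameter values the true fusion law can be strictly smaller, and it is the true fusion law whose grading is at issue. In case~(1a) one has $|\alpha|=1$, $n=2$, $C=\mathbb F_2^2$, so $C^*=\{\alpha,\alpha^c\}$ and hence $P_\alpha=\emptyset$: there are no $p^\pm$ parts at all, and $A_\lambda=0$ as well. The crucial hypothesis is $a=-1$, giving $\lambda=-\tfrac12$; a direct computation of the square of the $(\lambda-\tfrac12)$-eigenvector $v=2\mu c\,t_\alpha - e^\alpha$ then yields
\[
v^2 \;=\; (4\mu^2 c^2 + c)\,t_\alpha + 4\mu c\,e^\alpha \;=\; -2c\,t_\alpha + 4\mu c\,e^\alpha \;=\; 4c\,(\lambda t_\alpha + \mu e^\alpha)\;=\;4c\,e,
\]
using $\mu^2=-\tfrac{3}{4c}$. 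Thus in the actual fusion law $(\lambda-\tfrac12)\star(\lambda-\tfrac12)=\{1\}$, and the $\mathbb Z_2$-grading is $\mathcal F_+=\{1,0\}$, $\mathcal F_-=\{\lambda-\tfrac12\}$ (cf.\ Section~\ref{sec:auts}). Your account of case~(1), phrased as ``allowing $p^\pm\in\mathcal F_-$ when $N(p,p)$ is empty'', therefore cannot produce~(1a): there are no $p$'s, and the negatively graded element is $\lambda-\tfrac12$, directly contradicting your forcing step. The specific value $a=-1$ appears in the statement precisely because it is what collapses the diagonal entry; your argument never engages with the value of $a$.

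The fix is to work throughout with the actual fusion law determined by the algebra products for the given structure constants, not the generic table, and to analyse the $|\alpha|=1$ cases by computing which diagonal entries of Table~\ref{tab:small} genuinely shrink. For $|\alpha|\ge 2$ your reduction to a sign function $\sigma$ on weight partitions and the identification of $D_+$ is the right picture, modulo verifying that the relevant entries of Table~\ref{tab:small} are sharp there.
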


Note that in the second case, provided we make some further assumptions on the structure parameters, the fusion law is in fact $\mathbb{Z}_2 \times \mathbb{Z}_2$-graded, which leads to further Miyamoto automorphisms.  This situation will be described later where we calculate the associated Miyamoto group.


\section{Structure parameters}\label{sec:params}

Suppose that $A$ is an axial decomposition algebra with a $\mathbb{Z}_2$-graded fusion law which we know to also be a code algebra coming from a projective code $C$ and the axes $X$ are a set of small idempotents.  Can we recover the code algebra structure?  That is, we do not assume that the code, or the special basis of the toral and codeword elements are known.

We say that a set $X$ of small idempotents is \emph{pair-closed} if whenever $\lambda t_\alpha + \mu_\alpha e^\alpha \in X$, then $\lambda t_\alpha - \mu_\alpha e^\alpha \in X$ and vice versa.

\begin{theorem}\label{recover}
Suppose that $A$ is a code algebra satisfying the Axis Hypothesis for some projective code $C = \la S \ra$ and $X$ be the pair-closed set of small idempotents with respect to $S$.  Then, we can recover the code $C$, up to permutation equivalence, and also the special basis $\{t_i : i = 1, \dots, n \} \cup \{e^\alpha : \alpha \in C^*\}$.
\end{theorem}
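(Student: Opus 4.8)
The plan is to reconstruct the special basis from intrinsic algebraic data attached to the axes in $X$, and then read off the code $C$ from the multiplicative relations between the basis elements. Since $X$ is pair-closed, for each $\alpha \in S$ we have both $e_{\alpha,+} = \lambda t_\alpha + \mu_\alpha e^\alpha$ and $e_{\alpha,-} = \lambda t_\alpha - \mu_\alpha e^\alpha$ in $X$. The first observation is that $e_{\alpha,+} + e_{\alpha,-} = 2\lambda t_\alpha$ and $e_{\alpha,+} - e_{\alpha,-} = 2\mu_\alpha e^\alpha$, so once we can \emph{pair up} the elements of $X$ correctly (i.e.\ recognise, purely from the algebra, which pair $\{e_{\alpha,+}, e_{\alpha,-}\}$ belongs to a common $\alpha$), we recover every $t_\alpha$ and every $e^\alpha$, for $\alpha \in S$, up to the known scalars $\lambda, \mu_\alpha$. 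Since $\lambda$ is the same for all $\alpha$ (as noted after Definition \ref{intregular}), and $\mu_\alpha$ only rescales $e^\alpha$, this is harmless for reconstructing the code structure.

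First I would recover the pairing. Given $e \in X$, its $1$-eigenspace is $\langle e \rangle$ and its $(\lambda-\tfrac12)$-part is spanned by $2\mu_\alpha c_\alpha t_\alpha - e^\alpha$ (Table \ref{tab:esp}); together with $e$ itself these span $\langle t_\alpha, e^\alpha\rangle$, a canonically-defined $2$-dimensional subspace $V_e$ of $A$ attached to $e$. Two axes $e, e' \in X$ form a pair exactly when $V_e = V_{e'}$ and $e \neq e'$: if $e = e_{\alpha,\epsilon}$ then $V_e = \langle t_\alpha, e^\alpha\rangle$ depends only on $\alpha$, so $e_{\alpha,+}$ and $e_{\alpha,-}$ share the same $V$, whereas for $\alpha \neq \alpha'$ (in $S$, hence distinct supports since $C$ is projective) the subspaces $\langle t_\alpha, e^\alpha\rangle$ and $\langle t_{\alpha'}, e^{\alpha'}\rangle$ are distinct — this needs a short check using that $t_\alpha, t_{\alpha'}$ are distinct sums of orthogonal toral idempotents and the $e^\alpha$ are linearly independent from the $t_i$'s. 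So the pairing, and hence $\{t_\alpha : \alpha \in S\}$ and $\{e^\alpha : \alpha \in S\}$, are recovered.

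Next, from $\{t_\alpha : \alpha \in S\}$ I would recover the individual toral idempotents $t_1,\dots,t_n$: the subalgebra generated by the $t_\alpha$ is the span $T$ of all $t_i$ with $i \in \bigcup_{\alpha\in S}\supp(\alpha) = \supp(C) = [n]$ (the last equality because $C$ is projective, so no coordinate is identically zero and $C = \langle S\rangle$ covers all coordinates), and $T$ is spanned by its primitive idempotents, which are exactly the $t_i$. The partition of $[n]$ is thus recovered up to the relabelling permutation — this is the source of the "up to permutation equivalence" in the statement. Each $\alpha \in S$ is then recovered as a subset of $[n]$ via $\supp(\alpha) = \{i : t_i t_\alpha \neq 0\}$, giving $S$, hence $C = \langle S\rangle \subseteq \mathbb{F}_2^n$, up to permutation equivalence. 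Finally, to recover the remaining codeword elements $e^\beta$ for $\beta \in C^* \setminus S$ (and $e^{\alpha^c}$): each such $\beta$ is a sum $\beta = \alpha_{(1)} + \cdots + \alpha_{(r)}$ of elements of $S$, and the corresponding products $e^{\alpha_{(1)}} \cdots e^{\alpha_{(r)}}$ (taken in a suitable order) are nonzero scalar multiples of $e^\beta$ by the $b$-multiplication rule, since $A_C$ is non-degenerate; picking one such expression for each $\beta$ fixes the basis vector $e^\beta$ up to scalar. (The ambiguity $e^\beta$ vs.\ $e^{\beta^c}$ is genuine but does not affect the code $C$ or a choice of special basis.)

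The main obstacle I expect is the pairing step: one must be certain that the $2$-dimensional invariant $V_e$ genuinely separates distinct $\alpha \in S$ and genuinely identifies $e_{\alpha,+}$ with $e_{\alpha,-}$, which requires ruling out accidental coincidences $\langle t_\alpha, e^\alpha\rangle = \langle t_{\alpha'}, e^{\alpha'}\rangle$ for $\alpha \neq \alpha'$. Projectivity of $C$ (so distinct codewords have distinct supports among the relevant coordinates) and the explicit form of Table \ref{tab:esp} should make this clean, but it is the place where the hypotheses really get used; a fallback is to instead characterise the pairing by the property that $e_{\alpha,+}$ and $e_{\alpha,-}$ have the same $0$-part, $\lambda$-part and $(\lambda-\tfrac12)$-part (all visibly independent of the sign of $\mu_\alpha$ in Table \ref{tab:esp}) while differing as idempotents — this is an even more robust invariant. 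A secondary point to handle with a little care is that two axes could share an eigenvalue $\nu^\pm_p$, so one should phrase everything in terms of \emph{parts} of the decomposition rather than eigenspaces; but since we will have recovered the special basis, Table \ref{tab:esp} lets us write down the parts explicitly, which is exactly the remark following the theorem statement.
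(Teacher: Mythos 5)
Your overall architecture --- pair up the axes, extract $t_\alpha$ and $e^\alpha$ by summing and subtracting, recover the individual $t_i$ from products of the $t_\alpha$ using projectivity, and build the remaining $e^\beta$ as products of the $e^{\alpha_i}$ --- is the same as the paper's, and your second and third stages match the paper's reduction almost verbatim. The gap is in the pairing step, which you correctly flag as the main obstacle but do not actually resolve, and it is precisely where the paper has to work hardest. Your invariant $V_e$ is built from the $(\lambda-\tfrac12)$-\emph{part} of $e$, but the given data is only the algebra together with the idempotents in $X$: from this you can compute eigenspaces, not parts. The corollary immediately following the theorem makes clear that distinguishing the parts is a \emph{consequence} of the theorem, not an input, so invoking the $(\lambda-\tfrac12)$-part here is circular. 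The circularity bites in a genuine case: the Axis Hypothesis does not exclude $a=\tfrac{1}{|\alpha|}$, i.e.\ $\lambda=\tfrac12$, and then $\phi(\lambda-\tfrac12)=0=\phi(0)$, so the $(\lambda-\tfrac12)$-part sits inside the $0$-eigenspace together with the $0$-part and possibly some $p^\epsilon$-parts, and there is no a priori way to extract the line $\la 2\mu_\alpha c_\alpha t_\alpha - e^\alpha\ra$ (a similar collision occurs whenever some $\nu^\epsilon_p$ equals $\lambda-\tfrac12$). Your fallback characterisation also fails: by Table \ref{tab:esp} the $(\lambda-\tfrac12)$-part of $e_{\alpha,-}$ is $\la -2\mu_\alpha c_\alpha t_\alpha - e^\alpha\ra$, a \emph{different} line from that of $e_{\alpha,+}$, so this part is not independent of the sign of $\mu_\alpha$ (only the $0$- and $\lambda$-parts are).

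For comparison, the paper pairs axes by a purely multiplicative criterion: for $e,f\in X$ it inspects $\lla e,f\rra$. If $f=e_{\alpha,-}$ then $e_{\alpha,+}e_{\alpha,-}=\lambda(2\lambda-1)t_\alpha$ and the subalgebra is $2$-dimensional, whereas if $f$ comes from $\beta\neq\alpha,\alpha^c$ the product picks up a nonzero $e^{\alpha+\beta}$ term and the subalgebra has dimension strictly greater than two. This settles everything except the residual case $\1\in C$, $|\alpha|=\tfrac n2$, $a=\tfrac1{|\alpha|}$ (so $\lambda=\tfrac12$), where $e_{\alpha,-}$ and $e_{\alpha^c,\pm}$ are all orthogonal to $e_{\alpha,+}$; disposing of that case takes three further lemmas of delicate eigenspace-and-fusion analysis. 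To complete your argument you would need to supply work of comparable depth for exactly these degenerate parameter values, since that is where both your invariant and the naive multiplicative one break down.
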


Note that in an axial decomposition algebra we can easily calculate the eigenspaces given any axis.  However, if the evaluation map $\phi$ is not injective, then we may not be able to calculate the parts.  In particular, it may be the case that, for distinct $x, y \in \mathcal{F}$, $\phi(x) = \phi(y)$ and $x \star z = y \star z$ for all $z \in \mathcal{F}$.  Then, the $x$- and $y$-parts would be indistinguishable from the algebra structure.

\begin{corollary}
Suppose that $A$ is a code algebra satisfying the Axis Hypothesis for some projective code $C = \la S \ra$ and $X$ is the pair-closed set of small idempotents with respect to $S$. Then the parts $x \in \mathcal{F}$ are distinguishable.
\end{corollary}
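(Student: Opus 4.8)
The plan is to deduce this directly from Theorem \ref{recover}. Once the code $C$ and the special basis $\{t_i : i = 1, \dots, n\} \cup \{e^\beta : \beta \in C^*\}$ have been recovered, every part of the fusion law for every axis in $X$ is given by an explicit formula in this basis, by the Theorem recalled before Table \ref{tab:small} (i.e.\ \cite[Proposition 3.5, Theorem 4.1]{CM19}) together with Table \ref{tab:esp}; there is then nothing left to separate, even when two parts share an eigenvalue.

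First I would identify, for each axis $e \in X$, the codeword $\alpha \in S$ to which it belongs. Since $X$ is pair-closed, $e$ lies in a pair $\{e_{\alpha, +}, e_{\alpha, -}\}$, and $e_{\alpha, +} + e_{\alpha, -} = 2\lambda t_\alpha$, $e_{\alpha, +} - e_{\alpha, -} = \pm 2\mu_\alpha e^\alpha$. As the $t_i$ belong to the recovered basis, reading off which of them occur in $t_\alpha = \sum_{i \in \supp(\alpha)} t_i$ gives $\supp(\alpha)$, hence $\alpha$ (up to the permutation ambiguity already present in Theorem \ref{recover}), and hence $S$. The auxiliary scalars needed below are then also determined by the recovered data: $\lambda$ from $t_\alpha$, the parameter $c_\alpha$ from $e^\alpha \cdot e^\alpha = c_\alpha t_\alpha$, then $\mu_\alpha$ from $\mu_\alpha^2 = (\lambda - \lambda^2)/c_\alpha$, and $\xi_\beta$, $\theta^\pm_\beta$ from their defining equations; also the weight partition sets $C_\alpha(p)$ and $P_\alpha$ are read off from $C$ and $\alpha$.

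With $\alpha$ and these scalars in hand, Table \ref{tab:esp} now exhibits each part $A_x(e_{\alpha, \pm})$, $x \in \mathcal{F}$, as the span of explicit vectors in the special basis: $e$ itself for the part $1$; the $t_i$ with $i \notin \supp(\alpha)$ together with $e^{\alpha^c}$ for the part $0$; the differences $t_j - t_k$ for the part $\lambda$; the vector $2\mu_\alpha c_\alpha t_\alpha - e^\alpha$ for the part $\lambda - \tfrac12$; and the vectors $w^\pm_\beta = \theta^\pm_\beta e^\beta + e^{\alpha+\beta}$ with $\beta$ ranging over the appropriate weight-partition set for the part $p^\pm$. Consequently every part is computable as a concrete subspace of $A$, and distinct $x, y \in \mathcal{F}$ plainly yield distinct subspaces; this is exactly the claim. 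The only subtlety is the one flagged in the Remark preceding the corollary — that abstractly one cannot split two parts with equal eigenvalue and identical fusion behaviour — and it is precisely this that Theorem \ref{recover} removes, so no genuine obstacle remains. The one point deserving a line of care is that $p^+$ and $p^-$ really are different parts: this holds because the Field Hypothesis forces $\xi_\beta^2 \neq -1$, hence $\theta^+_\beta \neq \theta^-_\beta$, so $\la w^+_\beta \ra \neq \la w^-_\beta \ra$.
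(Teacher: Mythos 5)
Your argument is correct and follows exactly the route the paper takes: the paper's proof is the one-line observation that Theorem \ref{recover} yields the special basis and hence the parts. You have simply spelled out the details that the paper leaves implicit (recovering $\alpha$, the scalars, and reading the parts off Table \ref{tab:esp}, including the check via the Field Hypothesis that $p^+$ and $p^-$ are genuinely distinct), all of which is sound.
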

\begin{proof}
By Theorem \ref{recover}, we know the special basis and hence the parts.
\end{proof}

We prove Theorem \ref{recover} via a series of lemmas.
\medskip

First we reduce the problem to partitioning the set $X$ into pairs of idempotents that come from the same codeword.

Suppose that we have such a partition; our pairs are $\{\lambda t_\alpha \pm \mu_\alpha e^\alpha \}$ with $\alpha \in S$ for some set $S$.  We stress that the $\alpha$ in $S$ are just formal labels as the code is not yet known.  We show that we can indeed recover the code $C$ from this.

\begin{proposition}
Suppose that we can partition the set $X$ of axes into pairs $\{\lambda t_\alpha \pm \mu_\alpha e^\alpha \}$ with $\alpha \in S$ for some set $S$.  Then, we can recover the code $C$, up to permutation equivalence, and we know the special basis $\{t_i : i = 1, \dots, n \} \cup \{e^\beta : \beta \in C\}$ \textup{(}up to scaling for the $e^\beta$\textup{)}.
\end{proposition}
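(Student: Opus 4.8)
The plan is to work from the sum-pair structure of $X$: for each $\alpha \in S$, given the pair $\{e_{\alpha,+}, e_{\alpha,-}\}$, I would first recover $t_\alpha = \sum_{i \in \supp(\alpha)} t_i$ and $e^\alpha$ (up to scaling) by noting that $e_{\alpha,+} + e_{\alpha,-} = 2\lambda t_\alpha$ and $e_{\alpha,+} - e_{\alpha,-} = 2\mu_\alpha e^\alpha$. This immediately gives us all the $t_\alpha$ and all the $e^\alpha$ for $\alpha \in S$; since $\lambda$ is a fixed nonzero scalar and $\mu_\alpha \neq 0$ (non-degeneracy), these are genuine recoveries, with the $e^\alpha$ determined up to scalars. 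The subalgebra these generate is all of $A_C$ by Theorem \ref{codeisaxialalg}, so in principle everything is present; the work is to pin down the individual toral idempotents $t_i$ and the remaining codeword elements $e^\beta$, $\beta \in C^* \setminus S$.

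Next I would recover the individual $t_i$. The idempotents $t_i$ with $i \in \supp(\alpha)$ are exactly the primitive idempotents in the associative subalgebra $\langle t_\alpha \rangle_{\text{alg}}$ — equivalently, $t_\alpha$ acting on $A_C$ has the $t_i$'s as the obvious orthogonal idempotent summands; one can detect them because $t_\alpha$ is a sum of $|\alpha|$ pairwise orthogonal primitive idempotents, and the decomposition into primitives is unique. Doing this for every $\alpha \in S$ and using that $C = \langle S \rangle$ is projective (so the supports of the $\alpha \in S$ jointly separate the coordinates, or at least cover $[n]$ in a way controlled by projectivity), I would assemble the full set $\{t_1, \dots, t_n\}$, thereby also recovering $n$ and, from which $t_\alpha$ is a sum of which $t_i$, the support of each $\alpha \in S$. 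From the supports of a generating set $S$ of $C$, together with knowledge of which formal sums $\alpha + \beta$ occur (read off from the multiplication $e^\alpha e^\beta = b_{\alpha,\beta} e^{\alpha+\beta}$), one reconstructs $C$ as an abstract subspace of $\mathbb{F}_2^n$, hence up to permutation equivalence.

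Finally, to get the remaining $e^\beta$ for $\beta \in C^* \setminus S$: write $\beta = \alpha_1 + \cdots + \alpha_k$ with $\alpha_j \in S$ and use the products $e^{\alpha_1} \cdots e^{\alpha_k}$, which equal a nonzero scalar multiple of $e^\beta$ whenever no partial sum is $\0$ or $\1$; projectivity and the fact that we already know all $t_i$ and all supports let us choose such an expression and identify the resulting vector as (a scalar multiple of) $e^\beta$ by checking its eigenvalue behaviour under the $t_i$ (it is an eigenvector for exactly the $t_i$ with $i \in \supp(\beta)$). This determines the full special basis up to scaling of the codeword elements, which is all that is claimed.

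The main obstacle I expect is the bookkeeping that makes the reconstruction of $C$ as a \emph{coordinatised} code (not just an abstract vector space) well-defined up to permutation equivalence: one must check that the labelling of coordinates coming from the recovered $\{t_i\}$, and the assignment of supports to the formal labels $\alpha \in S$, are consistent and that projectivity genuinely forces enough coordinates to be visible — in particular that $\bigcup_{\alpha \in S} \supp(\alpha) = [n]$, which needs $C^\perp$ to have no weight-one words. The scaling ambiguity in the $e^\beta$ is harmless and expected, matching the statement's parenthetical remark.
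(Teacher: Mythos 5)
Your overall strategy matches the paper's: sum and difference each pair to recover $t_\alpha$ and $e^\alpha$ (the latter up to scaling), recover the individual $t_i$, read off the supports and the additive structure of $C$ from the multiplication, and obtain the remaining $e^\beta$ as scalar multiples of products $e^{\alpha_1}\cdots e^{\alpha_k}$. Your first, third and fourth steps are essentially the argument in the paper.

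The gap is in your mechanism for recovering the individual $t_i$. The associative subalgebra generated by the single idempotent $t_\alpha$ is just $\mathbb{F}t_\alpha$, which is one-dimensional and contains none of the $t_i$ individually when $|\alpha|>1$, so there are no ``primitive idempotent summands of $t_\alpha$'' to be found there. Nor can you appeal to uniqueness of a decomposition of $t_\alpha$ into pairwise orthogonal idempotents inside the whole non-associative algebra $A_C$: such decompositions are not unique. For instance, when $\lambda=\tfrac12$ (i.e.\ $a=\tfrac{1}{|\alpha|}$, which the Axis Hypothesis does not exclude) one has $t_\alpha=e_{\alpha,+}+e_{\alpha,-}$ with $e_{\alpha,+}e_{\alpha,-}=0$, a second orthogonal decomposition of $t_\alpha$ having nothing to do with the $t_i$. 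The step you are missing is to multiply \emph{different} toral sums against each other: $t_\alpha t_\beta=t_{\alpha\cap\beta}$, so multiplication corresponds to intersecting supports, and projectivity of $C=\la S\ra$ (distinct, nonzero columns of a generator matrix with rows from $S$) guarantees that the associative subalgebra $\lla t_\alpha : \alpha\in S\rra$ is all of $\lla t_1,\dots,t_n\rra\cong\mathbb{F}^n$, inside which $\{t_1,\dots,t_n\}$ really is the unique set of pairwise annihilating idempotents. You gesture at the supports ``jointly separating the coordinates,'' which is the right ingredient, but your proposal never actually forms a product $t_\alpha t_\beta$ for $\alpha\neq\beta$, and that cross-multiplication is what makes the individual $t_i$ (and hence $n$, the supports of the $\alpha\in S$, and the coordinatisation of $C$ up to permutation) recoverable.
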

\begin{proof}
Given each pair $e_{\alpha, \pm} = \lambda t_\alpha \pm \mu_\alpha e^\alpha$, we sum them to recover $t_\alpha$ and subtract them to recover $e^\alpha$ (up to scalars).  In particular, we know which is the toral and which is the codeword element.  Since $X$ generates the algebra, $Y := \{ t_\alpha, e^\alpha : \alpha \in S \}$ also generates the algebra.

We claim that $S$ generates the code.  Indeed, since $Y$ generates the algebra, we may write $e^\beta$ as a sum of products in $Y$.  However, it is clear from Definition \ref{CodeAlgebra} that we may do this just using codeword elements and we only need one summand.  That is $e^\beta$ is some scalar multiple of $e^{\alpha_1} \dots e^{\alpha_k}$ for some $\alpha_1, \dots \alpha_k \in S$.  Hence $\beta = \alpha_1 + \dots +\alpha_k$ and $S$ generates the code.

Let $T = \{ t_i : i = 1, \dots, n \}$ be the set of toral elements.  Since $S$ generates the code and the code is projective, we may multiply the $t_\alpha$ to get any $t \in T$.  Explicitly, we know that the code we are trying to reconstruct is projective.  Hence, for any $i \in 1, \dots, n$, there exist a subset of codewords $Y \subseteq S$ such that $\{i \} = \bigcap_{\beta \in Y} \supp(\beta)$.  Note that $t_\alpha t_\beta = t_{\alpha \cap \beta}$ and so multiplication of the $t_\alpha$ corresponds to intersection of the corresponding codewords.  Hence $\lla t_\alpha : \alpha \in S \rra = \lla T \rra$.  The unique set of pairwise annihilating idempotents in $\lla T \rra$ is $T$.  Note however, that since $S$ is just a set of formal labels, we can only recover $T$ up to a permutation of $\{1, \dots, n\}$.

By above, we can construct each $e^\beta$ (up to scaling) and so we can recover the special basis.  Therefore we know the code $C$, as its additive structure is the multiplicative structure of the set $\{ e^\beta : \beta \in C \}$.  Note however, as argued above with the $t_i$, we only know $C$ up to a permutation action on the columns.
\end{proof}

We now show that we can indeed find the pairs.

\begin{lemma}\label{1stpair}
Given $\lambda t_\alpha + \mu_\alpha e^\alpha \in X$, we can identify $\lambda t_\alpha - \mu_\alpha e^\alpha \in X$ provided we are not in the case where $\1 \in C$, $|\alpha| = \frac{n}{2}$, $a = \frac{2}{n} = \frac{1}{|\alpha|}$, $\lambda = \frac{1}{2}$ and $\lambda t_{\alpha^c} - \mu_{\alpha^c} e^{\alpha^c} \in X$.
\end{lemma}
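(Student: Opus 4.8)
The plan is to start from an element $e = \lambda t_\alpha + \mu_\alpha e^\alpha \in X$ and produce intrinsic (algebra-theoretic) data that single out its partner $e' = \lambda t_\alpha - \mu_\alpha e^\alpha$. Since $X$ is pair-closed, $e'$ is guaranteed to lie in $X$; the only question is how to recognise it. The natural first step is to compare $e$ against every other $f \in X$ and look at the product $ef$ inside $A$. For a generic pair $e = e_{\alpha,+}$, $f = e_{\beta,\pm}$ with $\beta \neq \alpha$, the element $f$ decomposes across several eigenspaces of $\mathrm{ad}_e$ (governed by the fusion law in Table~\ref{tab:small}), whereas for $f = e'$ we should get something much more rigid because $e$ and $e'$ share the same toral support. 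So the key computation is: express $e' $ in terms of the eigenspace decomposition of $\mathrm{ad}_e$, using the explicit parts from Table~\ref{tab:esp}.

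**Key steps.**

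First I would compute $\mathrm{ad}_e(e')$ directly from the multiplication in Definition~\ref{CodeAlgebra}. Writing $e = \lambda t_\alpha + \mu_\alpha e^\alpha$ and $e' = \lambda t_\alpha - \mu_\alpha e^\alpha$, one has $e e' = \lambda^2 t_\alpha - \mu_\alpha^2 (e^\alpha)^2 = \lambda^2 t_\alpha - \mu_\alpha^2 c_\alpha t_\alpha = (\lambda^2 - \mu_\alpha^2 c_\alpha) t_\alpha$, and since $\mu_\alpha^2 c_\alpha = \lambda - \lambda^2$ this is $(2\lambda^2 - \lambda) t_\alpha$. So $e e'$ is a scalar multiple of $t_\alpha$, which lies in the span of the $1$-part and the $\lambda - \tfrac12$-part of $e$ (indeed $t_\alpha = \frac{1}{\lambda}(e + \text{stuff})$... more precisely $t_\alpha$ and $e^\alpha$ together span $A_1(e) \oplus A_{\lambda-1/2}(e)$). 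The upshot is that $e'$ lies entirely in $A_1(e) \oplus A_{\lambda - 1/2}(e) \oplus A_0(e)$: write $e' = 2e - t_\alpha \cdot(\ldots)$ — concretely $e' = \lambda t_\alpha - \mu_\alpha e^\alpha$, and both $t_\alpha = \frac{1}{\lambda}(\text{projection to } A_1) + (\text{projection to } A_{\lambda-1/2})$, so $e'$ has \emph{zero component} in the parts $A_\lambda(e)$ and $A_{p^\pm}(e)$ for all $p \in P_\alpha$. This is the distinguishing feature: I would prove that $e'$ is the \emph{unique} element of $X \setminus \{e\}$ whose projection onto $A_\lambda(e) \oplus \bigoplus_{p} A_{p^\pm}(e)$ vanishes — equivalently, the unique $f \in X\setminus\{e\}$ with $f \in A_1(e) \oplus A_{\lambda-1/2}(e) \oplus A_0(e)$.

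To establish uniqueness I would take an arbitrary $f = e_{\beta, \pm} = \lambda t_\beta \pm \mu_\beta e^\beta \in X$ with $f$ supported only in the $1$-, $0$-, and $(\lambda-\tfrac12)$-parts of $e$, and show $\beta = \alpha$ or $\beta = \alpha^c$. The toral part $t_\beta$ decomposes across $A_0(e)$ (the $t_i$ with $i \notin \supp(\alpha)$), $A_\lambda(e)$ (the differences $t_j - t_k$ with $j,k \in \supp(\alpha)$), $A_1(e)$ and $A_{\lambda-1/2}(e)$ (the component along $t_\alpha$). For $t_\beta$ to have no $A_\lambda(e)$-component, $\supp(\beta)$ must meet $\supp(\alpha)$ in either all of it or none of it — i.e. $\supp(\alpha) \subseteq \supp(\beta)$ or $\supp(\alpha) \cap \supp(\beta) = \emptyset$. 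Simultaneously $e^\beta$ must have no component in any $A_{p^\pm}(e)$; from Table~\ref{tab:esp} the codeword element $e^\beta$ (for $\beta \neq \alpha, \alpha^c$) spans together with $e^{\alpha+\beta}$ a space meeting the $p(\beta)^\pm$-parts nontrivially unless $e^\beta \in A_0(e)$, which happens only for $\beta = \alpha^c$. Combining the weight constraint (equal weights $|\alpha| = |\beta|$ from $S$-intersection regularity, so $\supp(\alpha) \subseteq \supp(\beta)$ forces $\supp(\alpha) = \supp(\beta)$) with the codeword constraint leaves only $\beta = \alpha$ (excluded) or $\beta = \alpha^c$. This is exactly the exceptional case in the statement: when $\alpha^c \in C$, $e_{\alpha^c,-} = \lambda t_{\alpha^c} - \mu_{\alpha^c} e^{\alpha^c}$ might also lie in those three parts of $e$, and a short check shows this collision happens precisely when $\supp(\alpha^c)$ also contributes nothing to $A_\lambda(e)$ — automatic — and the scalar along $t_\alpha$ works out, which pins down $a = \frac{1}{|\alpha|}$, $|\alpha| = n/2$, $\lambda = \tfrac12$.

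**Main obstacle.**

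The hard part will be the bookkeeping in the uniqueness argument: carefully tracking which linear combinations of $t_i$'s and $e^\gamma$'s land in which part of $\mathrm{ad}_e$, since the parts $A_0(e)$, $A_\lambda(e)$, $A_{\lambda-1/2}(e)$ all involve toral elements and one must cleanly separate the $t_\alpha$-direction (inside $A_1 \oplus A_{\lambda-1/2}$) from the other toral directions. In particular I expect the delicate point is ruling out $f = e_{\alpha^c, +}$ while being forced to keep $f = e_{\alpha^c,-}$ open in exactly the degenerate parameter regime listed — this requires computing the precise $A_1(e)$- and $A_{\lambda-1/2}(e)$-components of $\lambda t_{\alpha^c} \pm \mu_{\alpha^c}e^{\alpha^c}$ and checking when the "$+$" choice fails the membership test but the "$-$" choice passes it, which is where the conditions $a = 2/n$ and $\lambda = 1/2$ emerge.
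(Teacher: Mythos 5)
There is a genuine gap in your uniqueness step, and it sits exactly at the point where the exceptional case of the lemma is supposed to emerge. Your proposed criterion is that $e_{\alpha,-}$ is the unique $f \in X\setminus\{e_{\alpha,+}\}$ lying in $A_1(e) \oplus A_0(e) \oplus A_{\lambda-\frac12}(e)$. But by Table \ref{tab:esp}, $A_0(e)$ is spanned by the $t_i$ with $i \notin \supp(\alpha)$ together with $e^{\alpha^c}$, so $e_{\alpha^c,\pm} = \lambda t_{\alpha^c} \pm \mu_{\alpha^c}e^{\alpha^c}$ lies \emph{entirely inside $A_0(e)$ for every choice of the structure parameters}. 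Your claim that the collision with $e_{\alpha^c,-}$ ``happens precisely when \dots the scalar along $t_\alpha$ works out, which pins down $a = \frac{1}{|\alpha|}$, $\lambda = \frac12$'' is vacuous: $\supp(\alpha^c)\cap\supp(\alpha) = \emptyset$, so $t_{\alpha^c}$ has no $t_\alpha$-component at all and no condition on $a$ or $\lambda$ ever arises. Consequently your test only proves the weaker statement that the partner can be identified unless $\alpha^c \in S$ (equivalently $|\alpha| = n/2$ and the complementary idempotents lie in $X$); it cannot recover the extra hypothesis $\lambda = \frac12$ in the lemma. A secondary problem is that your criterion is phrased in terms of the \emph{parts} $A_x(e)$, which are defined via the special basis — precisely what Theorem \ref{recover} is trying to reconstruct. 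At this stage only the eigenspaces of $\ad_e$ are intrinsically available, and since $\phi$ need not be injective (e.g.\ $\phi(\lambda - \frac12) = 0$ when $\lambda = \frac12$) the parts are not yet known to be computable; using them here is circular.

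The paper avoids both issues by using only manifestly intrinsic data: for $f \in X$ it computes $ef$ and $\dim \lla e, f\rra$. One finds $e_{\alpha,+}e_{\alpha,-} = \lambda(2\lambda-1)t_\alpha$, so the correct pair generates a $2$-dimensional subalgebra with \emph{nonzero} product when $\lambda \neq \frac12$; for $\beta \neq \alpha, \alpha^c$ the product contains a nonzero $e^{\alpha+\beta}$ term, forcing $\dim\lla e_{\alpha,+}, e_{\beta,\pm}\rra > 2$; and $e_{\alpha,+}e_{\alpha^c,\pm} = 0$ always. Thus the product $ef$ being a nonzero element of a $2$-dimensional subalgebra singles out the partner, and the ambiguity arises only when $\lambda = \frac12$ makes the true pair orthogonal as well — which is exactly where the conditions $a = \frac{2}{n}$, $\lambda = \frac12$ come from. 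If you want to salvage your computation, the observation you already made, namely $e_{\alpha,+}e_{\alpha,-} = (2\lambda^2 - \lambda)t_\alpha$, is the right tool: use the value of the product rather than membership in a sum of parts.
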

\begin{proof}
We consider subalgebras generated by pairs of axes and study properties of such a subalgebra.  First consider the case that our pair is $\{\lambda t_\alpha \pm \mu_\alpha e^\alpha \}$ as wanted.  Note that
\begin{align*}
(\lambda t_\alpha + \mu_\alpha e^\alpha)(\lambda t_\alpha - \mu_\alpha e^\alpha) &= (\lambda^2 - \mu_\alpha^2 c_\alpha)t_\alpha\\
&= \lambda(2\lambda -1) t_\alpha
\end{align*}
which is non-zero when $\lambda \neq \frac{1}{2}$ or equivalently $a \neq \frac{1}{|\alpha|}$.  So in all cases $\lla \lambda t_\alpha \pm \mu_\alpha e^\alpha \rra$ is $2$-dimensional, but in particular the two axes are not mutually orthogonal unless $\lambda = \frac{1}{2}$.

Suppose that our possible pair is $\{ \lambda t_\alpha + \mu_\alpha e^\alpha, \lambda t_\beta + \mu_\beta e^\beta \}$ for some $\beta \in C^* - \{ \alpha, \alpha^c\}$.  Then,
\[
(\lambda t_\alpha + \mu_\alpha e^\alpha)(\lambda t_\beta + \mu_\beta e^\beta) = \lambda^2 t_{\alpha \cap \beta} + \lambda a|\alpha \cap \beta|(\mu_\alpha e^\alpha + \mu_\beta e^\beta) + \mu_\alpha \mu_\beta b_{\alpha, \beta} e^{\alpha + \beta}
\]
In particular, since $\mu_\alpha \mu_\beta b_{\alpha, \beta} \neq 0$ and $\alpha + \beta \neq \alpha, \beta$, the above is not in the span of $\{ \lambda t_\alpha + \mu_\alpha e^\alpha, \lambda t_\beta + \mu_\beta e^\beta \}$.  Hence, the dimension of $\lla \lambda t_\alpha + \mu_\alpha e^\alpha, \lambda t_\beta + \mu_\beta e^\beta \rra$ is strictly bigger than two.

Finally, we are left with the case where the pair is $\{ \lambda t_\alpha + \mu_\alpha e^\alpha, \lambda t_{\alpha^c} + \mu_{\alpha^c} e^{\alpha^c} \}$. (This can only happen $\lambda t_{\alpha^c} + \mu_{\alpha^c} e^{\alpha^c} \in X$ which implies that $|\alpha^c| = |\alpha| = \frac{n}{2}$ and $\1 \in C$.) The pair are mutually orthogonal idempotents and hence generate a subalgebra of dimension two.  Hence, unless $\1 \in C$, $|\alpha| = \frac{n}{2}$ and $\lambda = \frac{1}{2}$, we can correctly pair partition $X$ into pairs $\{\lambda t_\alpha \pm \mu_\alpha e^\alpha \}$.
\end{proof}

So, we can reduce to the case where $\1 \in C$, $|\alpha| = \frac{n}{2}$, $a = \frac{2}{n} = \frac{1}{|\alpha|}$ and $\lambda = \frac{1}{2}$.  Given $e_+ := \lambda t_\alpha + \mu_\alpha e^\alpha$ it remains to distinguish between $e_- := \lambda t_\alpha - \mu_\alpha e^\alpha$ and $e^c_\pm := \lambda t_{\alpha^c} \pm \mu_{\alpha^c} e^{\alpha^c}$ to form the correct pair.

We consider the parts of $e_+$.  First note that the $\lambda$-part is spanned by $t_j-t_k$ for $j,k \in \supp(\alpha)$.  So $e_+$ and $e_-$ share a $\lambda$-part, whilst the $\lambda$-parts of $e_+$ and $e^c_\pm$ are disjoint. Hence if the $\lambda$-part of an axis is distinguished, then we are done.

If the $\frac{1}{2}$-eigenspace just consists of the $\lambda$-part, then it is clearly distinguished.  So, assume that there is some other part which is also in the $\frac{1}{2}$-eigenspace.  Since it cannot be $1$, $0$, or $\lambda-\frac{1}{2}$, it must be $p^\epsilon$ for some $p \in P_\alpha$, $\epsilon = \pm$.

Next we consider the $0$-eigenspace.  Since $\lambda = \frac{1}{2}$, $\phi(\lambda-\frac{1}{2}) = 0$ and so necessarily both the $0$- and $(\lambda-\frac{1}{2})$-parts are in the $0$-eigenspace.

\begin{lemma}
If we can distinguish the sum of the $0$- and $(\lambda-\frac{1}{2})$-parts from the remainder of the the $0$-eigenspace, then the $\lambda$-part is distinguished
\end{lemma}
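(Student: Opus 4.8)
The plan is to identify the $\lambda$-part of $e_+ = \lambda t_\alpha + \mu_\alpha e^\alpha$ with an annihilator, so that knowing the subspace $W := A_0 \oplus A_{\lambda - \frac12}$ — which, since $\lambda = \frac12$ forces $\phi(0) = \phi(\lambda - \frac12) = 0$, is precisely the hypothesised distinguishable part of the $0$-eigenspace — lets us recover it. Concretely, I would prove
\[
A_\lambda(e_+) = \{\, v \in A_{1/2}(e_+) : vw = 0 \text{ for all } w \in W \,\}.
\]
Once this identity is established the $\lambda$-part is distinguished, since the eigenspace $A_{1/2}(e_+)$ is directly computable and $W$ is known by hypothesis; and, as already noted, this completes the pairing: $e_+$ and $e_-$ share the $\lambda$-part $\la t_j - t_k : j,k \in \supp(\alpha) \ra$, whereas $e^c_\pm = \lambda t_{\alpha^c} \pm \mu_{\alpha^c} e^{\alpha^c}$ has $\lambda$-part $\la t_j - t_k : j,k \in \supp(\alpha^c) \ra$, a distinct nonzero subspace because $\supp(\alpha)$ and $\supp(\alpha^c)$ are disjoint and each has size $|\alpha| \geq 2$ in this reduced case (the possibility $|\alpha| = 1 = \frac{n}{2}$ would force $C = \mathbb{F}_2^2$ with $a = 1$, which is not among the graded cases of Theorem \ref{Z2grading}).

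The inclusion $\subseteq$ is read straight off Table \ref{tab:small}: $\lambda \star 0 = \lambda \star (\lambda - \frac12) = \emptyset$, so $A_\lambda(e_+)$ annihilates $W$. For $\supseteq$, I would decompose $A_{1/2}(e_+) = A_\lambda \oplus \bigoplus_{p^\epsilon : \nu^\epsilon_p = 1/2} A_{p^\epsilon}$, using that the $1$-, $0$- and $(\lambda - \frac12)$-parts have evaluations $1$, $0$, $0$, none equal to $\frac12$. By Table \ref{tab:esp} every part $A_{p^\epsilon}$ appearing here is spanned by codeword-element vectors $w^\epsilon_\beta = \theta^\epsilon_\beta e^\beta + e^{\alpha + \beta}$ with $\beta \in C^* \setminus \{\alpha, \alpha^c\}$, hence lies in $V := \la e^\beta : \beta \in C^* \setminus \{\alpha, \alpha^c\} \ra$. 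So any $v$ in the right-hand side can be written $v = v_\lambda + u$ with $v_\lambda \in A_\lambda$ and $u \in V$; since $v_\lambda$ already annihilates $W$ by the previous paragraph, so does $u$, and in particular $u \cdot e^{\alpha^c} = 0$, because $e^{\alpha^c} \in A_0 \subseteq W$.

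It then suffices to show that multiplication by $e^{\alpha^c}$ is injective on $V$, which forces $u = 0$ and hence $v = v_\lambda \in A_\lambda$. This is the single point requiring a (short) argument: for $\beta \in C^* \setminus \{\alpha, \alpha^c\}$ we have $\beta \neq \alpha^c$ and $\beta \neq (\alpha^c)^c = \alpha$, so by Definition \ref{CodeAlgebra} and non-degeneracy $e^\beta \cdot e^{\alpha^c} = b_{\beta, \alpha^c}\, e^{\beta + \alpha^c}$ is a nonzero scalar times a basis vector; and the translation $\beta \mapsto \beta + \alpha^c$ is a bijection of $C$ interchanging $\0 \leftrightarrow \alpha^c$ and $\1 \leftrightarrow \alpha$, hence restricting to a bijection of $C^* \setminus \{\alpha, \alpha^c\}$. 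Thus multiplication by $e^{\alpha^c}$ permutes the basis $\{ e^\beta : \beta \in C^* \setminus \{\alpha, \alpha^c\} \}$ of $V$ up to nonzero scalars, so it is invertible on $V$, as required.

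I do not expect a genuine obstacle here; the real care is in the bookkeeping. One must be sure that the complement of $A_\lambda$ in $A_{1/2}(e_+)$ consists purely of codeword-element combinations — so that it meets $\la t_i : i = 1, \dots, n \ra$ trivially and the injectivity of multiplication by $e^{\alpha^c}$ actually bites — and that $e^{\alpha^c}$ genuinely lies inside $W$ rather than only inside the full $0$-eigenspace (this is why the argument uses "$v$ annihilates all of $W$" rather than picking out $e^{\alpha^c}$ directly, which we cannot do from the hypothesis). Both facts are immediate from Table \ref{tab:esp} once stated carefully, so the proof is a brief structural computation rather than anything delicate.
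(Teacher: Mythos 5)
Your proof is correct, and it rests on the same basic mechanism as the paper's: inside the $\frac{1}{2}$-eigenspace, the $\lambda$-part is exactly the annihilator of the known subspace $W = A_0 \oplus A_{\lambda-\frac{1}{2}}$, the inclusion $A_\lambda W = 0$ coming from $\lambda \star 0 = \lambda \star (\lambda - \frac{1}{2}) = \emptyset$. Where you genuinely differ is in the converse half. The paper argues via the fusion law, citing \cite[Lemmas 4.6 and 4.11]{CM19} for $0 \star p^\epsilon = p^\epsilon \neq \emptyset$ and $(\lambda - \frac{1}{2}) \star p^\epsilon = p^\epsilon \neq \emptyset$ (using $\1 \in C$ and $a = \frac{1}{|\alpha|}$); strictly speaking, nonemptiness of a fusion product only says the part as a whole does not annihilate $W$, so one still needs the underlying product computations to rule out individual vectors in $\bigoplus A_{p^\epsilon}$ killing $W$. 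You close that gap explicitly and self-containedly: multiplication by $e^{\alpha^c} \in A_0 \subseteq W$ permutes the basis $\{e^\beta : \beta \in C^*\setminus\{\alpha,\alpha^c\}\}$ up to the nonzero scalars $b_{\beta,\alpha^c}$, because $\beta \mapsto \beta + \alpha^c$ is a bijection of $C^*\setminus\{\alpha,\alpha^c\}$, hence is injective on the span $V$ containing all the $p^\epsilon$-parts. This buys a cleaner and arguably tighter argument at the cost of invoking non-degeneracy and the explicit multiplication table rather than the already-established fusion law; your side remarks (that the test must be against all of $W$ since $e^{\alpha^c}$ is not individually identifiable, and that $|\alpha| \geq 2$ is needed for the $\lambda$-part to be nonzero) are both correct and are points the paper passes over silently.
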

\begin{proof}
Note that $0 \star \lambda = \emptyset$ and $(\lambda-\frac{1}{2}) \star \lambda = \emptyset$.  However, since $\1 \in C$, an easy calculation (\cite[Lemma 4.6]{CM19}) shows that $\emptyset \neq 0 \star p^\epsilon = p^\epsilon$.  Similarly, since $a = \frac{1}{|\alpha|}$, $\emptyset \neq (\lambda - \frac{1}{2}) \star p^\epsilon = p^\epsilon$ (\cite[Lemma 4.11]{CM19}).  Hence, the $\lambda$-part is distinguished from the $p^\epsilon$-part in the $\frac{1}{2}$-eigenspace.
\end{proof}

It remains to show the following.

\begin{lemma}
We can distinguish the sum of the $0$- and $(\lambda-\frac{1}{2})$-parts in the $0$-eigenspace.
\end{lemma}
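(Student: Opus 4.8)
We are in the special situation where $\1 \in C$, $|\alpha| = \frac{n}{2}$, $a = \frac{1}{|\alpha|}$, $\lambda = \frac{1}{2}$, and we want to distinguish, inside the $0$-eigenspace of the axis $e_+ = \frac{1}{2} t_\alpha + \mu_\alpha e^\alpha$, the subspace $A_0 \oplus A_{\lambda - \frac{1}{2}}$ from the remaining parts (which, since $\phi(\lambda - \tfrac12) = 0$, are those $p^\epsilon$ with $\nu^\epsilon_p = 0$). The plan is to characterise $A_0 \oplus A_{\lambda - \frac{1}{2}}$ purely in terms of the algebra structure, using the fusion law together with the explicit bases in Table~\ref{tab:esp}. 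The natural invariant to use is multiplication: from Table~\ref{tab:small}, both $0 \star 0 \subseteq \{0\}$ and $(\lambda - \tfrac12) \star (\lambda - \tfrac12) \subseteq \{1, \lambda - \tfrac12\}$, and $0 \star (\lambda - \tfrac12) = \emptyset$, so the subalgebra generated by $A_0 \oplus A_{\lambda-\frac12}$ together with $e_+$ stays inside $A_1 \oplus A_0 \oplus A_{\lambda-\frac12}$; in particular $(A_0 \oplus A_{\lambda-\frac12})^2 \subseteq \langle e_+ \rangle \oplus A_0 \oplus A_{\lambda - \frac12}$. By contrast, the $p^\epsilon$-parts multiply (via the table entries $X_i$ and $N(p,q)$) into parts $q^{\pm}$ coming from weight partitions of sums $\beta + \gamma$, and in the projective-code setting these genuinely escape $A_0 \oplus A_{\lambda - \frac12}$. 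So the idea is: the sum of the $0$- and $(\lambda-\tfrac12)$-parts is the unique maximal subspace $U$ of the $0$-eigenspace such that $U \cdot U \subseteq \langle e_+ \rangle + U$ (equivalently, such that $\lla U, e_+ \rra$ meets the $\tfrac12$-eigenspace trivially, or something in this spirit).

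First I would write down, using Table~\ref{tab:esp}, explicit bases: $A_0$ is spanned by $\{t_i : i \notin \supp(\alpha)\} \cup \{e^{\alpha^c}\}$ and $A_{\lambda-\frac12}$ by the single vector $2\mu_\alpha c_\alpha t_\alpha - e^\alpha$; and for each $p^\epsilon$ lying in the $0$-eigenspace the vectors $w^\epsilon_\beta = \theta^\epsilon_\beta e^\beta + e^{\alpha+\beta}$. Then I would compute products: products among $A_0 \oplus A_{\lambda-\frac12}$ basis vectors stay in $\langle e_+ \rangle + A_0 + A_{\lambda-\frac12}$ (a short direct calculation using $t_i t_j = \delta_{ij} t_i$, $e^{\alpha^c} e^{\alpha^c} = \sum c_\alpha t_i = c_\alpha t_{\alpha^c}$, $t_\alpha e^{\alpha^c}=0$, and $e^\alpha e^{\alpha^c}=0$). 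Next I would show that adding any nonzero vector $v$ from the span of the $w^\epsilon_\beta$'s breaks this: compute $v \cdot w$ for a suitable $w$ (e.g. $w = 2\mu_\alpha c_\alpha t_\alpha - e^\alpha$ in $A_{\lambda-\frac12}$, or another $w^{\epsilon'}_{\beta'}$), and verify the product has a nonzero component in some part outside $A_1 \oplus A_0 \oplus A_{\lambda-\frac12}$ — concretely, $e^\alpha \cdot e^\beta = b_{\alpha,\beta} e^{\alpha+\beta}$ is a codeword element for $\alpha+\beta \notin \{\alpha,\alpha^c\}$, which by Table~\ref{tab:esp} lands in a $p(\alpha+\beta)^{\pm} = p(\beta)^{\pm}$ part, and one checks this part is not $A_0$ or $A_{\lambda-\frac12}$. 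Projectivity of $C$ is what guarantees the relevant $b_{\alpha,\beta} \neq 0$ and that there is no accidental collapse; I would lean on \cite[Lemmas 4.6, 4.11]{CM19} (already invoked in the preceding lemma) for the fusion-law facts $0 \star p^\epsilon = p^\epsilon$ and $(\lambda-\tfrac12)\star p^\epsilon = p^\epsilon$ being nonempty, which is exactly what forces the product out of $A_0 \oplus A_{\lambda-\frac12}$.

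Assembling these, $A_0 \oplus A_{\lambda - \frac12}$ is recovered as $\{\, v \in A_0\text{-eigenspace} : v \cdot (A_0\text{-eigenspace}) \subseteq \langle e_+ \rangle + (\text{the subspace we are defining})\,\}$ — more cleanly, as the largest subspace $U$ of the $0$-eigenspace with $U^2 \subseteq \langle e_+ \rangle + U + (\text{$0$-eigenspace components only})$; I would phrase the final characterisation so that it is manifestly intrinsic (does not refer to the basis), for instance: $U$ is the set of $v$ in the $0$-eigenspace such that $e_+ \notin \lla v \rra$ and every product of $v$ with an element of the $0$-eigenspace lies in the span of $e_+$ and the $0$-eigenspace, then show $U = A_0 \oplus A_{\lambda-\frac12}$. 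Combined with the previous lemma, once this subspace is known the $\lambda$-part is distinguished, hence $e_+$ and $e_-$ are correctly paired, completing the proof of Lemma~\ref{1stpair}'s remaining case and thus of Theorem~\ref{recover}.

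The main obstacle I anticipate is verifying that \emph{no} $p^\epsilon$-part with $\nu^\epsilon_p = 0$ accidentally satisfies the same closure property as $A_0 \oplus A_{\lambda - \frac12}$ — i.e. ruling out the possibility that some linear combination of $w^\epsilon_\beta$'s multiplies back into $A_0 \oplus A_{\lambda-\frac12}$. This requires knowing that the $N(p,q)$ fusion products really do produce parts with evaluation $\neq 0$, or at least parts genuinely distinct from $0$ and $\lambda - \tfrac12$; here is where the hypothesis $\1 \in C$, $|\alpha| = n/2$ (so $\alpha^c$ exists and is the unique codeword with $p(\cdot) = (0,|\alpha|)$) and projectivity all need to be used carefully, and it is conceivable the clean statement needs a small extra case distinction depending on whether $(\lambda - \tfrac12)\star p^\epsilon$ hits $A_0$.
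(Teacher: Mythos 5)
Your approach --- characterising $A_0\oplus A_{\lambda-\frac12}$ by a closure property of multiplication \emph{within} the $0$-eigenspace --- is genuinely different from the paper's, and as written it has a gap in the negative half. The positive half is fine: $(A_0\oplus A_{\lambda-\frac12})^2\subseteq A_1\oplus A_0\oplus A_{\lambda-\frac12}\subseteq\la e_+\ra+(0\text{-eigenspace})$. But recall that from the algebra you can only see eigenspaces, not parts; so to disqualify a vector $y$ in a stray $q^\iota$-part with $\nu_q^\iota=0$ you must exhibit a product of $y$ with a $0$-eigenvector having a nonzero projection onto some eigenspace other than the $0$- and $1$-eigenspaces. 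Your suggested test vectors cannot do this: precisely because $\1\in C$ and $a=\frac{1}{|\alpha|}$ in the case at hand, \cite[Lemmas 4.6 and 4.11]{CM19} give $0\star q^\iota=q^\iota$ and $(\lambda-\frac12)\star q^\iota=q^\iota$ as \emph{singletons}, so $y\cdot(A_0\oplus A_{\lambda-\frac12})\subseteq A_{q^\iota}$ lands back inside the $0$-eigenspace --- the $q^{-\iota}$-component you hope to see in $e^\alpha e^\beta=b_{\alpha,\beta}e^{\alpha+\beta}$ cancels in the full product, which is exactly what those fusion rules assert. The only usable products are therefore those of stray parts with each other, e.g.\ $(w_\beta^\iota)^2=(\theta_\beta^\iota)^2c_\beta t_\beta+c_{\alpha+\beta}t_{\alpha+\beta}+2b_{\beta,\alpha+\beta}\theta_\beta^\iota e^\alpha$, and you have not shown these escape: for $q=(0,|\alpha|)$ this expression has no $A_\lambda$-component and lies in $\la e_+\ra+A_0+A_{\lambda-\frac12}$, and for $\xi_\beta=0$ its $A_\lambda$-component vanishes whenever $c_{\alpha+\beta}=(\theta_\beta^\iota)^2c_\beta$. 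These degenerate configurations are the actual content of the lemma, and your criterion does not detect them. (The uniqueness of a ``maximal'' subspace with your closure property is a further unproved claim.)

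The paper sidesteps all of this by multiplying against the $\frac12$-eigenspace instead, which contains the $\lambda$-part: since $0\star\lambda=(\lambda-\frac12)\star\lambda=\emptyset$ while $0\star q^\iota=(\lambda-\frac12)\star q^\iota=q^\iota$, the subspace $A_0\oplus A_{\lambda-\frac12}$ maps the whole $\frac12$-eigenspace into itself, whereas $q^\iota\star\lambda$ generically contains $q^{-\iota}$, whose eigenvalue $\nu_q^{-\iota}\neq\nu_q^\iota=0$ is visible. The two degenerate cases where this fails ($q=(0,|\alpha|)$, and $\xi_\beta=0$ with $\nu_q^{-\iota}=\frac12$) are then eliminated by direct computation with the structure constants, using products such as $w_\beta^\iota w_\beta^{-\iota}$ in which the second factor is a $\frac12$-eigenvector --- information your restriction to the $0$-eigenspace forgoes. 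If you wish to keep your route, you would need an analogous elimination of the degenerate squares identified above, which looks to be at least as much work.
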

\begin{proof}
Suppose that there is an additional part in the $0$-eigenspace.  Since this cannot be the $1$- or $\lambda$-part it must be a $q^\iota$-part for some $q \in P_\alpha$, $\iota = \pm$.

Let $x$ be a $0$-eigenvector with decomposition into parts given by $x = x_0 + x_{\lambda-\frac{1}{2}}$, where $x_f \in A_f$ for $f = 0, \lambda-\frac{1}{2}$ and $y \in A_{q^\iota}$.  Similarly, let $z$ be a $\frac{1}{2}$-eigenvector with decomposition $z = z_\lambda +z_{p^\epsilon}$, where $z_f \in A_f$ for $f = \lambda, p^\epsilon$.  We will show that we can distinguish the product $xz$ from $yz$ and hence distinguish the sum of the $0$- and $(\lambda-\frac{1}{2})$-parts.

From the fusion law, we have $0\star \lambda  = \emptyset= (\lambda-\frac{1}{2}) \star \lambda$.  Easy calculations from the previous proof show that $0 \star p^\epsilon = p^\epsilon$ and $(\lambda-\frac{1}{2})\star p^\epsilon = p^\epsilon$ (\cite[Lemmas 4.6 and 4.11]{CM19}).  So, $(x_0+x_{\lambda+\frac{1}{2}})z$ is a $\frac{1}{2}$-eigenvector.

By \cite[Lemma 4.9]{CM19}, $q^\iota \star \lambda$ is exactly equal to $\emptyset$ if $q = (0, |\alpha|)$; to $q^{-\iota}$ if $\xi_\beta = 0$, where $p(\beta)=q$; and to $\{q^\iota, q^{-\iota}\}$ otherwise. Note that, since $\theta_\beta^\pm$ are distinct roots, $\nu_r^+ \neq \nu_r^-$, for any $r \in P_\alpha$.  So, in the last case, $\phi(q^\iota) \neq \phi(q^{-\iota})$ and hence $yz$ has a non-trivial projection onto an eigenspace other than the $\frac{1}{2}$-eigenspace and therefore we can distinguish $xz$ from $yz$.

We now consider the first two cases.  Note that, since $\phi(q^\iota) = 0$, we have $0 = \nu_q^\iota = \frac{1}{4} + \mu_\alpha b_{\alpha, \beta}(\theta_\beta^\iota + \xi_\beta)$, where $p(\beta) = q$.

For the first case, $q = (0, |\alpha|)$ and so $\xi_\beta = \frac{1}{4\mu_\alpha b_{\alpha, \beta}}$.  Substituting this above, we get $0 = \frac{1}{2} + \mu_\alpha b_{\alpha, \beta}\theta_\beta^\iota$ and so $\theta_\beta^\iota = -\frac{1}{2\mu_\alpha b_{\alpha, \beta}}$.  However, $\theta_\beta^\iota$ is a root of $x^2 + 2\xi_\beta +1 = 0$, which is a contradiction. 

For the second case, we get a contradiction unless $\phi(q^{-\iota}) = \frac{1}{2}$.  So, we assume that $\phi(q^{-\iota}) = \frac{1}{2}$ and $\xi_\beta=0$.  In particular, $w_\beta^\iota := \theta_\beta^\iota e^\beta + e^{\alpha+\beta}$ is a $0$-eigenvector and $w_\beta^{-\iota}$ is a $\frac{1}{2}$-eigenvector, where $p(\beta) = q$.  An easy calculation (\cite[Lemma 4.12]{CM19}) shows that
\[
w_\beta^\iota w_\beta^\kappa = (\theta_\beta^\iota \theta_\beta^\kappa) c_\beta t_\beta + c_{\alpha+\beta}t_{\alpha+\beta} + b_{\beta, \alpha+\beta}(\theta_\beta^\iota +  \theta_\beta^\kappa)e^\alpha
\]
for $\kappa = \pm$.  Since $\xi_\beta = 0$, we have $w_\beta^\iota w_\beta^{-\iota} = c_\beta t_\beta + c_{\alpha+\beta}t_{\alpha+\beta}$.  So for $y = w_\beta^\iota$ not to be distinguished, $c_\beta t_\beta + c_{\alpha+\beta}t_{\alpha+\beta}$ must be a $\frac{1}{2}$-eigenvector.  In particular, from Table \ref{tab:esp}, we see that it must be in the $\lambda$-part and hence $c_{\alpha+\beta} = -c_\beta$ and $|\beta| = \frac{|\alpha|}{2}$.  However, $y^2 = (w_\beta^\iota)^2 = (\theta_\beta^\iota)^2 c_\beta t_\beta + c_{\alpha+\beta}t_{\alpha+\beta} + 2 b_{\beta, \alpha+\beta}\theta_\beta^\iota e^\alpha$ which must be a $0$-eigenvector.  In particular, it does not contain any $\lambda$-part and so each $t_i$ for $i \in \supp(\alpha)$ has the same coefficient.  Since $\xi_\beta = 0$, we have $\theta_\beta^\iota = \pm 1$.  Also, since $|\beta| = \frac{|\alpha|}{2}$, both $t_\beta$ and $t_{\alpha+\beta}$ intersect $t_\alpha$ and hence $c_{\alpha+\beta} = (\theta_\beta^\iota)^2c_\beta = c_\beta$.  Since $c_\beta \neq 0$, this is a contradiction.
\end{proof}

Therefore, given any $e_+$ we can always pair it with $e_-$.  So, we have the following.

\begin{corollary}
We can partition $X$ into pairs $\{\lambda t_\alpha \pm \mu_\alpha e^\alpha \}$.
\end{corollary}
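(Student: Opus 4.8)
The plan is to assemble the Corollary directly from Lemma~\ref{1stpair} together with the two lemmas immediately above it; there is essentially no new content, only a case split, so I will describe how the pieces fit and where the difficulty actually sits. Fix $e_+ = \lambda t_\alpha + \mu_\alpha e^\alpha \in X$ and try to single out its intended partner $e_- = \lambda t_\alpha - \mu_\alpha e^\alpha$ using only the algebra. First I would invoke the computations in the proof of Lemma~\ref{1stpair}: for $\beta \in S \setminus \{\alpha,\alpha^c\}$ the product $e_+ \cdot e_{\beta,\pm}$ has a non-zero $e^{\alpha+\beta}$-component, so $\lla e_+, e_{\beta,\pm} \rra$ has dimension at least three, and therefore any $f \in X$ with $\lla e_+, f \rra$ two-dimensional must lie in $\{e_-,\, e^c_+,\, e^c_-\}$, where $e^c_\pm := \lambda t_{\alpha^c} \pm \mu_{\alpha^c} e^{\alpha^c}$; the latter two occur in $X$ only when $\1 \in C$ and $\alpha^c \in S$, which forces $|\alpha| = |\alpha^c| = n/2$. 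Since $e_+ \cdot e_- = \lambda(2\lambda-1) t_\alpha$ is non-zero precisely when $\lambda \neq \tfrac12$ while $e_+ \cdot e^c_\pm = 0$ always, in the case $\lambda \neq \tfrac12$ (equivalently $a \neq \tfrac1{|\alpha|}$) I would pick $e_-$ out as the unique $f \in X$ with $\lla e_+, f \rra$ two-dimensional and $e_+ \cdot f \neq 0$.

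The remaining case is the exceptional configuration $\1 \in C$, $|\alpha| = n/2$, $a = \tfrac1{|\alpha|}$, $\lambda = \tfrac12$, $e^c_\pm \in X$, where I must separate $e_-$ from $e^c_+$ and $e^c_-$. Here I would use the $\lambda$-part: by Table~\ref{tab:esp}, the $\lambda$-part of $e_+$ is $\la t_j - t_k : j,k \in \supp(\alpha)\ra$, it coincides with the $\lambda$-part of $e_-$, it is disjoint from those of $e^c_\pm$ (which are built from $\supp(\alpha^c)$), and it is non-zero because $|\alpha| = n/2 \ge 2$ in this configuration; so it suffices to recover the $\lambda$-part of every axis in $X$ from the algebra and then pair $e_+$ with the unique member of $X$ sharing its $\lambda$-part. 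To recover the $\lambda$-part I would note that $\lambda = \tfrac12$ gives $\phi(0) = \phi(\lambda - \tfrac12) = 0$ and $\phi(\lambda) = \tfrac12$, so the $0$-eigenspace of $e_+$ is $A_0 \oplus A_{\lambda - \frac{1}{2}}$ possibly enlarged by some $A_{q^\iota}$, and the $\tfrac12$-eigenspace is $A_\lambda$ possibly enlarged by some $A_{p^\epsilon}$; then the second of the two lemmas above splits off $A_0 \oplus A_{\lambda - \frac{1}{2}}$ inside the $0$-eigenspace, and the first splits off $A_\lambda$ inside the $\tfrac12$-eigenspace, using $0 \star \lambda = (\lambda - \tfrac12)\star\lambda = \emptyset$ against $0 \star p^\epsilon = (\lambda - \tfrac12)\star p^\epsilon = p^\epsilon$ in this configuration. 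This pins down $A_\lambda$, and since $X$ is pair-closed the resulting pairing exhausts $X$.

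I expect the only real obstacle to be internal to the exceptional configuration, and in fact internal to the last of the two lemmas above: once $\phi$ fails to be injective on the eigenvalue $0$, one must rule out that a stray part $A_{q^\iota}$ with $\nu_q^\iota = 0$ is masquerading inside the $0$-eigenspace, which needs the explicit eigenvector products of \cite{CM19} and the distinctness $\nu_r^+ \ne \nu_r^-$ to dispose of the delicate sub-cases $q = (0,|\alpha|)$ and $\xi_\beta = 0$. Everything else in the Corollary is bookkeeping layered on top of Lemma~\ref{1stpair}.
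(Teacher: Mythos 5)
Your proposal is correct and follows the paper's own route essentially verbatim: Lemma~\ref{1stpair} (using the dimension of $\lla e_+, f\rra$ and the non-vanishing of $e_+\cdot e_-$ when $\lambda\neq\tfrac12$) disposes of all but the exceptional configuration, and there the two subsequent lemmas recover the $\lambda$-part of each axis, which $e_+$ shares with $e_-$ but not with $e^c_\pm$. You also correctly locate the only genuinely delicate point, namely ruling out a stray $q^\iota$-part with $\nu_q^\iota=0$ hiding in the $0$-eigenspace, which is exactly the content of the paper's final lemma before this corollary.
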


This completes the proof of Theorem \ref{recover}.


We now turn our attention to the $c$ structure constants in the algebra.

\begin{theorem}\label{csame}
Suppose $C$ is a projective code and $A_C$ is a $\mathbb{Z}_2$-graded code algebra satisfying the Axis Hypothesis for $S$. If $C \neq \mathbb{F}_2^2$, then, $c := c_{\alpha}$ for all $\alpha \in S$.
\end{theorem}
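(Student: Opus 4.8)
The plan is to reduce the whole statement to showing that $\mu_\alpha^2$ is independent of $\alpha\in S$. Indeed the Axis Hypothesis forces $\lambda=\tfrac{1}{2a|\alpha|}\neq 0,1$ and non-degeneracy gives $c_\alpha\neq 0$, so the relation $\mu_\alpha^2=\frac{\lambda-\lambda^2}{c_\alpha}$ inverts to $c_\alpha=\frac{\lambda-\lambda^2}{\mu_\alpha^2}$, and hence $\mu_\alpha^2$ constant yields $c_\alpha$ constant at once. If $|S|\le 1$ there is nothing to prove, so assume $|S|\ge 2$. A key point is that, $A_C$ being a $\mathbb{Z}_2$-graded code algebra, all the evaluation maps of the axes in $X$ coincide with a single map $\phi$ (this is built into the hypotheses of Theorem~\ref{Z2grading}); hence for every weight partition $p\in P_\alpha$ --- and $P_\alpha=P_{\alpha'}$ for all $\alpha'\in S$ by intersection regularity --- the unordered pair $\{\nu^+_p,\nu^-_p\}$ of values $\phi$ takes on the parts $p^{\pm}$, and in particular the product $\nu^+_p\nu^-_p$, is an invariant of $p$ alone, not depending on which axis $e_{\alpha,+}$ we use to compute it.

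Assuming for the moment that $P_\alpha\neq\emptyset$, fix $p\in P_\alpha$ and for each $\alpha\in S$ pick $\beta=\beta(\alpha)\in C_\alpha(p)$. By $S$-intersection regularity and non-degeneracy $b_{\alpha,\beta}$ has a common nonzero value $b_p$, and $|\alpha\cap\beta|$ equals the first entry of $p$, so $|\alpha|-2|\alpha\cap\beta|$ is a constant $d_p$; thus $\mu_\alpha b_{\alpha,\beta}\,\xi_\beta=\tfrac{\lambda a}{2}(|\alpha|-2|\alpha\cap\beta|)=\tfrac{\lambda a}{2}d_p$ does not depend on $\alpha$. Now using the relations $\theta^+_\beta+\theta^-_\beta=-2\xi_\beta$ and $\theta^+_\beta\theta^-_\beta=-1$ for the roots of $x^2+2\xi_\beta x-1$, a short expansion of $\nu^{\pm}_p=\tfrac{1}{4}+\mu_\alpha b_{\alpha,\beta}(\theta^{\pm}_\beta+\xi_\beta)$ collapses (the cross term vanishes) to
\[
\nu^+_p\nu^-_p \;=\; \tfrac{1}{16}-(\mu_\alpha b_{\alpha,\beta})^2-(\mu_\alpha b_{\alpha,\beta}\,\xi_\beta)^2 \;=\; \tfrac{1}{16}-(\mu_\alpha b_p)^2-\bigl(\tfrac{\lambda a}{2}d_p\bigr)^2.
\]
Rearranging, $(\mu_\alpha b_p)^2=\tfrac{1}{16}-\bigl(\tfrac{\lambda a}{2}d_p\bigr)^2-\nu^+_p\nu^-_p$, whose right-hand side depends only on $p$ and on $\phi$, not on $\alpha$. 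Since $b_p\neq 0$, $\mu_\alpha^2$ is therefore the same for all $\alpha\in S$, and the theorem follows in this case. I deliberately use the symmetric combination $\nu^+_p\nu^-_p$ rather than, say, $\nu^+_p-\nu^-_p$, so that the argument is unaffected by any ambiguity over which part of $e_{\alpha,+}$ is labelled $p^+$ and which $p^-$.

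It remains to rule out $P_\alpha=\emptyset$. This occurs exactly when $C^*\setminus\{\alpha,\alpha^c\}=\emptyset$, that is $|C^*|\le 2$; with $|S|\ge 2$ and $S\subseteq C^*$ generating $C$ this forces $S=C^*=\{\alpha,\alpha^c\}$, so $\1\in C$ and $C=\{\0,\alpha,\alpha^c,\1\}$ has dimension $2$. If $|\alpha|\ge 2$ then any weight-$2$ vector $v$ with $\supp(v)\subseteq\supp(\alpha)$ is orthogonal to each of $\0,\alpha,\alpha^c,\1$, so $v\in C^\perp$ has weight $2$, contradicting projectivity; hence $|\alpha|=1$, and by the same argument $|\alpha^c|=1$, so $n=|\alpha|+|\alpha^c|=2$ and $C=\mathbb{F}_2^2$, which is excluded. (Alternatively, one could simply invoke the classification Theorem~\ref{Z2grading} and observe that $P_\alpha=\emptyset$ can only happen in case~(1)(a).) Thus $P_\alpha\neq\emptyset$, completing the proof.

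The arithmetic leading to the displayed identity is routine; the genuinely delicate point is the edge case $P_\alpha=\emptyset$, which is truly exceptional: when $|C^*|\le 2$ the evaluation map records only the $c$-independent eigenvalues $1,0,\lambda,\lambda-\tfrac12$, so $c_\alpha$ is simply invisible to $\phi$, and this degenerate situation is precisely the code $\mathbb{F}_2^2$ that has to be removed from the statement. A secondary thing to get right is to check that $\mu_\alpha b_{\alpha,\beta}\xi_\beta$ is a genuine invariant of $p$ (independent of $\alpha$ and of the $c$'s), since this is exactly what makes comparing the evaluation maps across different axes say something about the $\mu_\alpha$.
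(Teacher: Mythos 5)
Your proof is correct, but it takes a genuinely different route from the paper's. The paper first reduces to comparing two codewords $\alpha,\beta\in S$ with $\beta\neq\alpha^c$, using connectivity of the graph on $S$ whose only non-edges join complementary pairs (connectivity fails precisely in the $S=\{\alpha,\alpha^c\}$, $C=\mathbb{F}_2^2$ configuration that you isolate as $P_\alpha=\emptyset$, ruled out by the same projectivity argument you give). It then exploits the symmetry $\beta\in C_\alpha(p)\Leftrightarrow\alpha\in C_\beta(p)$: equating $\phi(p^\epsilon(\alpha))$ with $\phi(p^\epsilon(\beta))$ produces a relation mixing $\theta^\epsilon_\beta(\alpha)$ and $\theta^\epsilon_\alpha(\beta)$, which is resolved by a two-case analysis ($\xi=0$ versus $\xi\neq 0$, the latter by comparing the two quadratics satisfied by $\omega\,\theta^\pm_\alpha(\beta)$ to force $\omega^2=1$). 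You instead compare two arbitrary $\alpha,\alpha'\in S$ through the symmetric function $\nu^+_p\nu^-_p$ of the common evaluation map, using Vieta's relations for $x^2+2\xi_\beta x-1$ to eliminate the $\theta$'s altogether; the resulting identity $(\mu_\alpha b_p)^2=\tfrac1{16}-\bigl(\tfrac{\lambda a}{2}d_p\bigr)^2-\nu^+_p\nu^-_p$ checks out, and it reads off $\mu_\alpha^2$ with no case split, no connectivity argument, and built-in immunity to the ambiguity in labelling $p^+$ versus $p^-$. The trade-off is that the paper's argument yields the slightly sharper conclusion $\mu_\alpha=\mu_\beta$ (with compatible sign choices), which it invokes in the remark following the theorem, whereas you obtain $\mu_\alpha^2=\mu_{\alpha'}^2$; since $c_\alpha=\frac{\lambda-\lambda^2}{\mu_\alpha^2}$ with $\lambda\neq 0,1$ guaranteed by the Axis Hypothesis, that is all the theorem as stated requires.
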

\begin{proof}
Consider the graph on $S$ with edges $\alpha \sim \beta$ if $\beta \neq \alpha^c$.  Since $C$ is projective and $C \neq \mathbb{F}_2^2$, this graph is connected.  Hence, it suffices to show that $c_\alpha = c_\beta$ for $\alpha, \beta \in S$ with $\beta \neq \alpha^c$.

We fix notation.  Let $p^\epsilon(\alpha)$ denote the $p^\epsilon$-part with respect to the small idempotent associated to $\alpha$.  Similarly, for the coefficients $\xi_\beta(\alpha)$ and $\theta^\epsilon_\beta(\alpha)$.

By Theorem \ref{recover}, the parts of $\lambda t_\alpha + \mu_\alpha e^\alpha$ and $\lambda t_\beta + \mu_\beta e^\beta$ are known.  Let $p = p(\beta)$ be the weight partition of $\beta$ with respect to $\alpha$.  In other words, $\beta \in C_\alpha(p)$.  Then, $\alpha \in C_\beta(p)$ for the same weight partition $p$.  So $p^\epsilon(\alpha)$ and $p^\epsilon(\beta)$ both have the same eigenvalue.
\begin{align}
0 &= \phi(p^\epsilon(\alpha)) - \phi(p^\epsilon(\beta))\nonumber\\
&= b_{\alpha, \beta}\left( \mu_\alpha \theta^\epsilon_\beta(\alpha) - \mu_\beta \theta^\epsilon_\alpha(\beta) + \mu_\alpha \xi_\beta(\alpha)  - \mu_\beta \xi_\alpha(\beta)  \right)\label{eqn:eigeq}
\end{align}
Note that $b_{\alpha, \beta} \neq 0$.  We split into two cases: either $|\alpha| - 2|\alpha \cap \beta| = 0$ in $\mathbb{F}$, or not.

Suppose that $|\alpha| - 2|\alpha \cap \beta| = 0$.  By \cite[Lemma 3.3]{CM19}, this holds if and only if $\xi_\beta(\alpha) = 0$ which is equivalent to $\theta^\epsilon_\beta(\alpha) = \pm 1$.  In particular, we may choose our labelling so that $\theta^+_\beta(\alpha) = 1$.  Note that $|\beta| - 2|\alpha \cap \beta| = 0$ too and so we have the analogous result with respect to $\beta$.  In this case, Equation \ref{eqn:eigeq} yields $\mu_\alpha = \mu_\beta$ and hence $c_\alpha = c_\beta$.

Now, suppose that $|\alpha| - 2|\alpha \cap \beta| = |\beta| - 2|\alpha \cap \beta| \neq 0$.  By rearranging the equations for $\xi_\beta(\alpha)$ and $\xi_\alpha(\beta)$ and combining, we obtain
\[
\xi_\beta(\alpha) = \tfrac{\mu_\beta}{\mu_\alpha} \xi_\alpha(\beta) 
\]
So Equation \ref{eqn:eigeq} yields $\theta^\epsilon_\beta(\alpha) = \frac{\mu_\beta}{\mu_\alpha} \theta^\epsilon_\alpha(\beta)$.  We write $\omega := \frac{\mu_\beta}{\mu_\alpha}$.  By definition, $\theta^\pm_\alpha(\beta)$ are the two solutions to
\[
x^2 + 2\xi_\alpha(\beta) x -1 =0
\]
So, $\theta^\pm_\beta(\alpha) = \omega \theta^\pm_\alpha (\beta)$ are the two solutions to
\[
x^2 + 2\omega \xi_\alpha(\beta) x -\omega^2 = x^2 + 2 \xi_\beta(\alpha) x -\omega^2 = 0
\]
However, $\theta^\pm_\beta(\alpha)$ are already the two solutions to $x^2 + 2 \xi_\beta(\alpha) x -1 = 0$ and hence $\omega^2=1$.  That is, $\mu_\alpha = \mu_\beta$ (again using choices of sign for roots) and hence $c_\alpha = c_\beta$.
\end{proof}

In particular, except for case (1a) where $C = \mathbb{F}_2^2$ and $a = -1$, all the $\mathbb{Z}_2$-graded algebras in Theorem \ref{Z2grading} have $c_\alpha := c_{\alpha_i} = c_{\alpha_j}$ for $\alpha_i, \alpha_j \in S$ and hence $\mu := \mu_{\alpha_i} = \mu_{\alpha_j}$ also.

\section{Automorphisms}\label{sec:auts}

In this section, we calculate the automorphism groups for each of the $\mathbb{Z}_2$-graded algebras in Theorem \ref{Z2grading}.  Throughout this section, let $S$ be a set of codewords and $X$ be the corresponding pair-closed set of small-idempotents.  First note that, in all but case (1a), the $c$ structure constant is the same for all $\alpha \in S$ and we write $c = c_\alpha$ for this.  We also write $\mu := \mu_\alpha$ when $C \neq \mathbb{F}_2^2$.

For cases (2) and (3) in Theorem \ref{Z2grading}, the code $D = \proj_\alpha(C)$ has a codimension one subcode $D_+$  which is the sum of weight sets and $\1 \in D_+$.  Let $D_- := D \setminus D_+$.  Then in both these cases, the negatively graded part is given by
\[
A_- = \bigoplus_{m \in wt(D_-)} A_{(m, |\alpha|-m)^\pm}
\]
Note that since $1 \in D_+$, $m \in wt(D_\pm)$ if and only if $|\alpha|-m \in wt(D_\pm)$, so the indexing is indeed over parts $p^\epsilon$.  We will abuse notation and just write $p(\beta) \in wt(D_\pm)$ for $m \in wt(D_\pm)$, where $p(\beta) = (m, |\alpha|-m)$.  We also observe that the grading of the part $p^\epsilon$ depends only on the partition $p$, not $\epsilon$. We call this the \emph{standard case}.

As noted, in case (2), provided we choose the structure constants in a `nice' way, the $\mathbb{Z}_2$-grading extends to a $\mathbb{Z}_2 \times \mathbb{Z}_2$-grading.  In this situation, we also describe the extra Miyamoto automorphisms obtained in addition to those in the standard case.

The graded parts in cases (1a) and (1b) in Theorem \ref{Z2grading} are different from the other two cases. Here the negative part is not described by a codimension one code, so we must deal with these cases separately.

\subsection{The standard case}

In this case, $|\alpha| \ge 2$ and let $e = e_{\alpha, \pm}$.

\begin{proposition}\label{tauaction}
The action of $\tau_e$ on $A$ is given by
\begin{align*}
t_i &\mapsto t_i \\
e^\beta &\mapsto \begin{cases}
e^\beta & \mbox{if } p(\beta) \in  wt(D_+) \\
-e^\beta& \mbox{if } p(\beta) \in  wt(D_-)
\end{cases}
\end{align*}
\end{proposition}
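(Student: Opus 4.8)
The plan is to compute the Miyamoto involution $\tau_e$ directly from its definition as the linear map which is the identity on $A_+$ and negation on $A_-$, and then re-express the resulting action in terms of the special basis $\{t_i\} \cup \{e^\beta\}$. By the $\mathbb{Z}_2$-grading description in Theorem \ref{Z2grading} (the standard case, cases (2) and (3)), we have
\[
A_+ = A_1 \oplus A_0 \oplus A_\lambda \oplus A_{\lambda - \frac{1}{2}} \oplus \bigoplus_{p \in wt(D_+)} A_{p^\pm}, \qquad A_- = \bigoplus_{p \in wt(D_-)} A_{p^\pm}.
\]
So $\tau_e$ fixes every vector lying in the parts $1$, $0$, $\lambda$, $\lambda - \tfrac{1}{2}$, and in $A_{p^\pm}$ for $p \in wt(D_+)$, and negates every vector in $A_{p^\pm}$ for $p \in wt(D_-)$.

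First I would handle the toral elements. Using the explicit bases in Table \ref{tab:esp}: the vector $e = \lambda t_\alpha + \mu_\alpha e^\alpha$ spans $A_1$; the vectors $t_i$ for $i \notin \supp(\alpha)$ lie in $A_0$; the vectors $t_j - t_k$ for $j,k \in \supp(\alpha)$ span $A_\lambda$; and $2\mu_\alpha c_\alpha t_\alpha - e^\alpha$ spans $A_{\lambda - \frac12}$. All of these lie in $A_+$, so $\tau_e$ fixes each of them. For $i \notin \supp(\alpha)$ this immediately gives $\tau_e(t_i) = t_i$. For $i \in \supp(\alpha)$, fixing all the differences $t_j - t_k$ shows $\tau_e$ acts as a scalar on $\langle t_i : i \in \supp(\alpha)\rangle$ modulo constants; combined with fixing $\lambda t_\alpha + \mu_\alpha e^\alpha$ and $2\mu_\alpha c_\alpha t_\alpha - e^\alpha$, a short linear-algebra argument (two independent fixed combinations of $t_\alpha$ and $e^\alpha$ force $\tau_e$ to fix both $t_\alpha$ and $e^\alpha$, provided the $2\times 2$ coefficient determinant $-\lambda - 2\mu_\alpha^2 c_\alpha = -(\lambda + 2(\lambda-\lambda^2)) = -(3\lambda - 2\lambda^2)$ is nonzero, which holds by the Axis Hypothesis $a \neq \frac{1}{2|\alpha|}, \frac{1}{3|\alpha|}$) gives $\tau_e(t_i) = t_i$ for all $i$ and also $\tau_e(e^\alpha) = e^\alpha$. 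Note $p(\alpha)$ is not defined, but $e^\alpha$ is treated via its appearance in the $1$- and $(\lambda-\tfrac12)$-parts; alternatively one records $e^\alpha$ and $e^{\alpha^c}$ separately since $e^{\alpha^c} \in A_0 \subseteq A_+$.

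Next, for $\beta \in C^* \setminus \{\alpha, \alpha^c\}$, I would use the part basis $w^\pm_\beta = \theta^\pm_\beta e^\beta + e^{\alpha+\beta}$ spanning $A_{p(\beta)^\pm}$ together with $w^\pm_{\alpha+\beta} = \theta^\pm_{\alpha+\beta} e^{\alpha+\beta} + e^\beta$, noting $p(\alpha+\beta) = p(\beta)$. Since $\theta^+_\beta \neq \theta^-_\beta$ (Field Hypothesis), the four vectors $w^+_\beta, w^-_\beta, w^+_{\alpha+\beta}, w^-_{\alpha+\beta}$ span $\langle e^\beta, e^{\alpha+\beta}\rangle$, and all four lie in the same graded component $A_{\epsilon}$ where $\epsilon = +$ if $p(\beta) \in wt(D_+)$ and $\epsilon = -$ if $p(\beta) \in wt(D_-)$. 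Hence $\tau_e$ acts as the scalar $\epsilon$ on $\langle e^\beta, e^{\alpha+\beta}\rangle$, giving $\tau_e(e^\beta) = e^\beta$ if $p(\beta) \in wt(D_+)$ and $\tau_e(e^\beta) = -e^\beta$ if $p(\beta) \in wt(D_-)$. This covers all codeword elements. The main obstacle — really the only subtlety — is the bookkeeping at the "boundary" basis elements $t_i$ with $i \in \supp(\alpha)$ and $e^\alpha$: their contributions are spread across several parts ($A_1$, $A_\lambda$, $A_{\lambda-\frac12}$) rather than sitting inside a single part, so one must verify that the fixed vectors from these parts span enough of $\langle t_i : i\in\supp(\alpha)\rangle \oplus \langle e^\alpha\rangle$ to pin the action down, which is exactly where the nondegeneracy conditions from the Axis Hypothesis are used. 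Assembling these computations yields the stated formula for $\tau_e$.
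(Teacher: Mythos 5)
Your proof is correct and follows essentially the same route as the paper: $\tau_e$ fixes the span of the positively graded parts (which contains all $t_i$, $e^\alpha$ and $e^{\alpha^c}$) and acts on $e^\beta$ via the relation $(\theta^+_\beta-\theta^-_\beta)e^\beta = w^+_\beta - w^-_\beta$ together with the fact that the grading of $p^\pm$ depends only on $p$. The only difference is that you spell out the $2\times 2$ determinant computation showing $t_\alpha$ and $e^\alpha$ lie in the span of the $1$- and $(\lambda-\tfrac12)$-parts, a detail the paper leaves implicit.
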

\begin{proof}
By definition $1,0, \lambda - \frac{1}{2}$ and $\lambda$ (where it exists) are in the positive part of the grading.  Since $t_i$, $e^\alpha$ and $e^{\alpha^c}$ are all in the span of $A_+$, they are all fixed by $\tau_e$.  Now consider $\beta \in C^* \setminus \{ \alpha, \alpha^c\}$.  We may write
\[
(\theta^+_\beta - \theta^-_\beta) e^\beta = (\theta^+_\beta e^\beta + e^{\alpha+\beta})  - ( \theta^-_\beta e^\beta + e^{\alpha + \beta}) = w^+_\beta - w^-_\beta
\]
Recall that by the Field Hypothesis, $\theta^+_\beta \neq \theta^-_\beta$.  Since the grading of $p^\pm$ depends only on the partition $p$ and not on $\pm$ and $\tau_e$ negates the $w^\pm_\beta$ for $\beta$ such that $p(\beta)$ is graded negatively, the result follows.
\end{proof}

\begin{corollary}
The automorphism $\tau_e$ does not depend on the value of the structure constants.
\end{corollary}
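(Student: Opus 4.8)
The plan is to read off the claim directly from the explicit formula for $\tau_e$ given in Proposition \ref{tauaction}. The key observation is that the description of the action on the basis $\{t_i\} \cup \{e^\beta\}$ does not mention any of the structure parameters $a_{i,\alpha}$, $b_{\alpha,\beta}$, or $c_{i,\alpha}$: the toral elements are fixed, and each $e^\beta$ is sent to $\pm e^\beta$ according only to whether the weight partition $p(\beta)$ lies in $wt(D_+)$ or $wt(D_-)$. Since $D = \proj_\alpha(C)$, the codimension-one subcode $D_+$, and hence the partition of weight partitions into the two graded parts, are determined entirely by the code $C$ and the choice of $\alpha \in S$ — these are combinatorial data independent of $\Lambda$.

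So the proof is essentially a one-line appeal to Proposition \ref{tauaction}: for a fixed code $C = \la S \ra$ and fixed $\alpha$, two code algebras $A_C(\Lambda)$ and $A_C(\Lambda')$ (both satisfying the Axis Hypothesis, both yielding a $\mathbb{Z}_2$-graded fusion law, so that Proposition \ref{tauaction} applies to each) have the same special basis as an abstract indexing set, and $\tau_e$ acts on this basis by the same rule in both cases. Hence $\tau_e$ is literally the same linear map. I would phrase this as: "By Proposition \ref{tauaction}, the matrix of $\tau_e$ with respect to the special basis is the diagonal matrix with entries $1$ on the $t_i$ and on $e^\beta$ with $p(\beta) \in wt(D_+)$, and $-1$ on $e^\beta$ with $p(\beta) \in wt(D_-)$. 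None of these entries, nor the partition of the basis they refer to, involves $\Lambda$."

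There is one subtlety worth addressing explicitly, which I expect to be the only mild obstacle: the \emph{special basis itself} — in particular the scaling of the codeword elements $e^\beta$ — is part of the data of $A_C(\Lambda)$, and a priori one might worry that changing $\Lambda$ forces a rescaling of the $e^\beta$ that could interact with $\tau_e$. But this is harmless: $\tau_e$ acts as a scalar ($+1$ or $-1$) on each $e^\beta$, and scalars commute with rescaling, so the action is unchanged under any diagonal change of the $e^\beta$. (Equivalently, one can observe that $\tau_e$ preserves each one-dimensional span $\la e^\beta \ra$ and acts on it by a fixed sign determined only by $p(\beta)$.) I would include a sentence to this effect so the corollary is not vacuously or ambiguously stated. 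Beyond that, the proof is complete, and no computation is required.
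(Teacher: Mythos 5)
Your proposal is correct and matches the paper's (implicit) argument: the corollary is stated without proof precisely because it is read off from Proposition \ref{tauaction}, whose formula for $\tau_e$ involves only the combinatorial data $wt(D_\pm)$ and none of the parameters in $\Lambda$. Your extra remark that the sign action on each $\la e^\beta \ra$ is insensitive to rescaling of the basis is a reasonable clarification but not a substantive departure.
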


Recall that for a given $\alpha \in C$, we have two different idempotents $e_\pm := \lambda t_\alpha \pm \mu e^\alpha$.

\begin{corollary}\label{plusminussame}
$\tau_{e_+} = \tau_{e_-}$.
\end{corollary}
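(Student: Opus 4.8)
The plan is to invoke Proposition \ref{tauaction} for both idempotents $e_+ = \lambda t_\alpha + \mu e^\alpha$ and $e_- = \lambda t_\alpha - \mu e^\alpha$ and observe that the explicit formula it produces is literally the same. The key point is that the $\mathbb{Z}_2$-grading data entering the formula — namely the partition of the parts into $A_+$ and $A_-$, which is governed by whether $p(\beta) \in wt(D_+)$ or $p(\beta) \in wt(D_-)$ — depends only on the code $C$ and the distinguished codeword $\alpha$, and in particular is independent of the sign choice $\pm$ in $e_{\alpha,\pm}$. This independence is exactly what is recorded in the remarks following Theorem \ref{codeisaxialalg} (the fusion law and grading are the same for $e_{\alpha,+}$ and $e_{\alpha,-}$, only the parts in Table \ref{tab:esp} change via $\mu \mapsto -\mu$).

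The steps I would carry out are: (1) Apply Proposition \ref{tauaction} to $e = e_+$ to get $\tau_{e_+}$ fixing every $t_i$, fixing $e^\beta$ when $p(\beta) \in wt(D_+)$, and negating $e^\beta$ when $p(\beta) \in wt(D_-)$. (2) Apply the same proposition to $e = e_-$, noting that the proof of Proposition \ref{tauaction} goes through verbatim with $\mu$ replaced by $-\mu$ and $\theta_\beta^\pm$ replaced by $-\theta_\beta^\pm$; since the grading of $p^\pm$ still depends only on $p$, and since $(-\theta^+_\beta) - (-\theta^-_\beta) = -(\theta^+_\beta - \theta^-_\beta) \neq 0$, one again recovers $e^\beta$ as a scalar multiple of $w^+_\beta - w^-_\beta$ and concludes $\tau_{e_-}$ acts by the identical rule. (3) Since $\tau_{e_+}$ and $\tau_{e_-}$ agree on the spanning set $\{t_i\} \cup \{e^\beta : \beta \in C^*\}$, they are equal as linear maps, hence as automorphisms.

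There is no real obstacle here — it is essentially a bookkeeping corollary of Proposition \ref{tauaction}. The only thing to be slightly careful about is to confirm that the sign-flip $\mu \mapsto -\mu$ does not disturb which parts lie in $A_+$ versus $A_-$; but Theorem \ref{Z2grading} exhibits $A_\pm$ purely in terms of $wt(D_\pm)$, with no reference to $\mu$, so this is immediate. One could alternatively phrase the whole argument as: the Corollary \ref{tauaction}-successor already tells us $\tau_e$ does not depend on the structure constants, and $\mu$ is (up to sign) such a structure-constant-dependent quantity, so the same formula must hold for both members of the pair; I would include the explicit verification via Proposition \ref{tauaction} anyway for completeness.
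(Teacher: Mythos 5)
Your proposal is correct and follows essentially the same route as the paper: both apply Proposition \ref{tauaction} and observe that whether $e^\beta$ lies in the positive or negative part is determined solely by the weight partition $p(\beta)$ (i.e.\ by $|\alpha\cap\beta|$), which does not involve $\mu$, so the explicit action on the basis $\{t_i\}\cup\{e^\beta\}$ is identical for $e_+$ and $e_-$. Your extra check that the derivation of Proposition \ref{tauaction} survives the substitution $\mu\mapsto-\mu$, $\theta_\beta^\pm\mapsto-\theta_\beta^\pm$ is a harmless elaboration of what the paper leaves implicit.
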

\begin{proof}
By Proposition \ref{tauaction}, we need only consider the $e^\beta$.  In particular, whether or not $e^\beta$ is in the positive or negative part depends only on its intersection with $\alpha$, not the value of $\mu$.
\end{proof}

In particular, $A$ is an axial decomposition algebra where the $\tau$-map, which is the map on the axes given by $a \mapsto \tau_a$, is not a bijection. It is, in general, a $2$-fold cover.

 In light of the above result, we will write $\tau_\alpha$ for $\tau_{e_\pm}$, where $e = \lambda t_\alpha \pm \mu e^\alpha$.

We may now consider the Miyamoto group in the case where the grading is a $\mathbb{Z}_2$-grading, but not a $\mathbb{Z}_2 \times \mathbb{Z}_2$-grading.

\begin{corollary}\label{Miymod}
In the standard case, the Miyamoto group is an elementary abelian $2$-group of order at most $2^{|S|}$.
\end{corollary}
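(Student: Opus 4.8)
\textbf{Proof proposal for Corollary \ref{Miymod}.}

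The plan is to read off the structure of the Miyamoto group directly from Proposition \ref{tauaction} and its corollaries. By Corollary \ref{plusminussame}, for each $\alpha \in S$ the two small idempotents $e_{\alpha,+}$ and $e_{\alpha,-}$ give the same Miyamoto involution $\tau_\alpha$, so the axial subgroup $T_{e_{\alpha,\pm}}$ is $\langle \tau_\alpha \rangle$, and hence $G(X) = \langle \tau_\alpha : \alpha \in S \rangle$ is generated by at most $|S|$ elements. Each $\tau_\alpha$ is an involution, so it remains to show that these generators commute; then $G(X)$ is a quotient of $(\mathbb{Z}_2)^{|S|}$, which is precisely the claimed statement (elementary abelian of order at most $2^{|S|}$).

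The key step is to verify that $\tau_\alpha \tau_\beta = \tau_\beta \tau_\alpha$ for all $\alpha, \beta \in S$. Here I would use the explicit description in Proposition \ref{tauaction}: each $\tau_\gamma$ fixes every toral element $t_i$ and acts on each codeword element $e^\delta$ by multiplication by a sign $\epsilon_\gamma(\delta) \in \{\pm 1\}$ depending only on the weight partition of $\delta$ with respect to $\gamma$. Since the $\{t_i\} \cup \{e^\delta\}$ form a basis of $A$ and both $\tau_\alpha$ and $\tau_\beta$ are diagonal in this basis, they commute: on $t_i$ both composites are the identity, and on $e^\delta$ both composites act by the scalar $\epsilon_\alpha(\delta)\epsilon_\beta(\delta) = \epsilon_\beta(\delta)\epsilon_\alpha(\delta)$. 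This is the only real point to check, and it is immediate once one has Proposition \ref{tauaction} in hand; the obstacle, such as it is, is simply making sure one is entitled to apply that proposition to every $\alpha \in S$ simultaneously, which is fine because the special basis $\{t_i\} \cup \{e^\delta : \delta \in C^*\}$ is fixed independently of which $\alpha$ we choose and the standard-case hypotheses hold for each $\alpha \in S$.

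Finally, I would conclude: $G(X)$ is generated by $|S|$ pairwise-commuting involutions, hence is an elementary abelian $2$-group of order at most $2^{|S|}$, as claimed. No further computation of the precise order is needed here — that refinement is exactly what the later cases (1), (2), (3) of the main Miyamoto-group theorem address, using the specific codes involved to determine which products of the $\tau_\alpha$ act trivially.
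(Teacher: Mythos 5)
Your proof is correct and takes essentially the same approach as the paper: both arguments use Proposition \ref{tauaction} to observe that every Miyamoto involution acts diagonally with entries $\pm 1$ on the fixed special basis $\{t_i\}\cup\{e^\beta\}$, hence any two commute and each has order at most two, and both invoke Corollary \ref{plusminussame} to cut the number of generators down to $|S|$. Nothing further is needed.
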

\begin{proof}
Consider the decompositions into positive and negative parts for each Miyamoto involution $\tau_e$, $e \in X$.  By the above proposition, we see that the standard basis of $t_i$, $i = 1, \dots, n$ and $e^\alpha$, $\alpha \in C^*$ is a refinement of the intersection of all the decompositions given by the gradings for each Miaymoto involution $\tau_e$.  In particular, since each $\tau_e$ acts as $\pm 1$ on each basis element, it is clear that any two Miyamoto involutions commute and hence $G$ is abelian.  Since all the $\tau_e$ have order two, $G$ is an elementary abelian $2$-group.  By Corollary \ref{plusminussame}, it is clear that there are at most $|S|$ generators, hence the order is at most $2^{|S|}$.
\end{proof}

In the case where $|\alpha|=2$, we can say more.  In this case, $C = \bigoplus_{i=1}^r C_i$ is the direct sum of even weight codes $C_i$ all of length $m$.

\begin{corollary}\label{Nmod}
Let $|\alpha|=2$ and so $C = \bigoplus_{i=1}^r C_i$, where $C_i$ has length $m$.  Then the Miyamoto group is
\[
\la \tau_\alpha : \alpha \in S \ra \cong
\begin{cases}
2^{r(m-1)} & \mbox{if $m$ is odd}\\
2^{r(m-2)} & \mbox{if $m$ is even}
\end{cases}
\]
\end{corollary}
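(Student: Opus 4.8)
The plan is to use Corollary \ref{Miymod}, which already tells us that the Miyamoto group $G = \la \tau_\alpha : \alpha \in S\ra$ is elementary abelian of order at most $2^{|S|}$, and then compute the rank of $G$ exactly by understanding the relations among the generators $\tau_\alpha$. Since $C = \bigoplus_{i=1}^r C_i$ with each $C_i$ an even-weight code of length $m$, and since $|\alpha| = 2$, every $\alpha \in S$ has support a pair $\{j,k\}$ of coordinates lying in a single block $i$; moreover $S$ generates $C$, so the pairs $\alpha \in S$ must span each even-weight code $C_i$. By Proposition \ref{tauaction}, $\tau_\alpha$ fixes every $t_i$ and acts on each codeword element $e^\beta$ by $\pm 1$, where the sign is $+$ exactly when $p(\beta) \in wt(D_+)$; in case (2) Theorem \ref{Z2grading} tells us $D = D(\alpha) = \mathbb{F}_2^2$ and $D_+ = \{\0,\1\}$, so $wt(D_+) = \{0,2\}$ and $wt(D_-) = \{1\}$. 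Hence $\tau_\alpha(e^\beta) = -e^\beta$ precisely when $|\alpha \cap \beta| = 1$, i.e.\ when $\supp(\alpha)$ meets $\supp(\beta)$ in exactly one coordinate; equivalently $\tau_\alpha$ acts on $e^\beta$ by $(-1)^{(\alpha,\beta)}$, where $(\cdot,\cdot)$ is the standard bilinear form on $\mathbb{F}_2^n$ (note $|\alpha \cap \beta| \bmod 2 = (\alpha,\beta)$ since $|\alpha|=2$).

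The key step is therefore to identify $G$ with a quotient of the $\mathbb{F}_2$-span of $S$ inside $\mathbb{F}_2^n$, via the homomorphism $\alpha \mapsto \tau_\alpha$. First I would observe that since $\tau_\alpha$ is determined by the linear functional $\beta \mapsto (\alpha,\beta)$ on $C^* = C \setminus \{\0,\1\}$, and since products $e^{\alpha_1}\cdots e^{\alpha_k}$ span the $e^\beta$, the map $\alpha \mapsto \tau_\alpha$ extends to a group homomorphism from $(C,+)$ (or at least from $\la S\ra = C$) to $G$. Its kernel consists of those $v \in C$ such that $(v,\beta) = 0$ for all $\beta \in C^* = C\setminus\{\0,\1\}$. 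Since $C$ is projective, $C^\perp$ has minimum weight $\ge 3$, so no weight-one or weight-two vector lies in $C^\perp$; in particular the radical of the restricted form is small. I would argue that the kernel is exactly $C \cap C^\perp$ together with possibly $\1$ (if $\1 \in C$): a vector $v$ pairs trivially with all of $C$ iff $v \in C^\perp$, and pairing trivially with $C \setminus\{\0,\1\}$ but nontrivially with $\1$ forces $v$ to be, on each block, either $\0$ or $\1$. Working block by block, $C_i$ is the full even-weight code of length $m$ (this needs to be pulled from the classification — the relevant statement is that case (2) forces each $C_i$ to be the even-weight code), whose dual is $\{\0, \1_m\}$; so $C_i^\perp \cap C_i = \{\0\}$ when $m$ is odd and $= \{\0,\1_m\}$ when $m$ is even. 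Hence $C = \bigoplus C_i$ has $\dim C = r(m-1)$, and the kernel of $\alpha \mapsto \tau_\alpha$ has dimension $0$ when $m$ is odd and dimension $r$ (spanned by the block-wise all-ones vectors) when $m$ is even, giving $|G| = 2^{r(m-1)}$ or $2^{r(m-2)}$ respectively.

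I expect the main obstacle to be making the identification $G \cong C / (\text{kernel})$ fully rigorous: one must check that $\alpha \mapsto \tau_\alpha$ really is multiplicative as a function of $\alpha \in C$ — i.e.\ that $\tau_{\alpha+\alpha'} = \tau_\alpha \tau_{\alpha'}$ even when $\alpha + \alpha'$ is not in $S$ or has weight $\neq 2$ — and that the action of $G$ on the whole algebra (not just on the $e^\beta$, but consistently, e.g.\ on products like $e^\beta e^\gamma = b_{\beta,\gamma} e^{\beta+\gamma}$ and on the toral parts of $e^\beta e^{\beta^c}$) is governed purely by the bilinear pairing. The cleanest route is: show $\tau_\alpha$ acts on the basis $\{t_i\} \cup \{e^\beta\}$ by $t_i \mapsto t_i$, $e^\beta \mapsto (-1)^{(\alpha,\beta)} e^\beta$ (extending the formula of Proposition \ref{tauaction} to all $\beta$, including $\beta = \alpha, \alpha^c$ where the sign is $+1 = (-1)^{(\alpha,\alpha)}$ since $|\alpha|$ is even, and $(-1)^{(\alpha,\alpha^c)}$ since $|\alpha \cap \alpha^c| = 0$), so that the map $C \to \Aut(A)$, $v \mapsto \big(t_i\mapsto t_i,\ e^\beta \mapsto (-1)^{(v,\beta)}e^\beta\big)$ is a well-defined homomorphism whose image contains all $\tau_\alpha$ and whose restriction to $\la S\ra = C$ has image exactly $G$. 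Then computing $\dim(\ker)$ block-by-block as above finishes the argument. A secondary point requiring care is citing from \cite{CM19} the precise statement that, in case (2), each $C_i$ is the \emph{full} even-weight code of length $m$ (rather than merely some even-weight code), which is what pins down $\dim C = r(m-1)$ and the dual $\{\0,\1_m\}$.
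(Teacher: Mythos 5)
Your proposal is correct and takes essentially the same route as the paper: both identify $\tau_\alpha$ as acting trivially on the $t_i$ and by $(-1)^{(\alpha,\beta)}$ on $e^\beta$, turn $\alpha\mapsto\tau_\alpha$ into a homomorphism from $(C,+)$, and compute the kernel as the radical $C\cap C^\perp$ block by block (the paper phrases this as parities of intersections and reduces to an irreducible block first, but it is the same argument). Your explicit diagonal formula even makes the well-definedness of the extension from $S$ to $C$ slightly more transparent than the paper's comparison of two decompositions.
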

\begin{proof}
First, by Corollary \ref{Miymod}, $\tau_\alpha$ and $\tau_\beta$ commute, where $\alpha, \beta \in S$ and $N$ is an elementary $2$-group of order at most $2^{|S|}$.  We consider the case where $C$ is an irreducible code; the general case follows immediately from this.

Define a map $\psi \colon C \to N$ by $\alpha \mapsto \tau_\alpha$ for $\alpha \in S$ and extend linearly.

We first show that $\psi$ is a well-defined homomorphism.  Let $\alpha \in C$ and decompose $\alpha$ in two different ways as $\alpha = \alpha_1 + \dots + \alpha_k$ and $\alpha = \beta_1 + \dots + \beta_l$, where $\alpha_i, \beta_j \in S$.  We compare the actions of $\tau_{\alpha_1} \dots \tau_{\alpha_k}$ and $\tau_{\beta_1} \dots \tau_{\beta_l}$.  Both fix all the $t_i$.  The codeword element $e^\gamma$ is inverted by some $\tau_\delta$, $\delta \in S$, if and only if $|\gamma \cap \delta| = 1$.  Now, if $|\gamma \cap \alpha|$ equals $0$, respectively $1$, then $\gamma$ intersects an even, respectively an odd, number of $\alpha_i$.  However, the same is true for the $\beta_j$.  Hence $\tau_{\alpha_1} \dots \tau_{\alpha_k} = \tau_{\beta_1} \dots \tau_{\beta_l}$ and $\psi$ is a well-defined homomorphism.

Since $\psi$ is a homomorphism, $N$ has order at most $2^{\dim(C)}$.  Suppose that $\tau_\alpha = 1$ for some $\alpha \in C \setminus \0$.  Then every $\beta$ must intersect $\alpha$ in a set of size $0$.  Observe that the parity of the intersection of two codewords is equal to their inner product.  It is clear that the only possible vector which has inner product $0$ with every codeword in an irreducible even weight code is $\1$.  However, $\1 \in C$ if and only if $m$ is even.  So, when $m$ is even, the kernel of $\psi$ is $\la \1 \ra$, otherwise it is trivial.
\end{proof}

\begin{remark}
Note that the above proof does not hold when $|\alpha| \neq 2$.  When $|\alpha| \neq 2$, the map $\phi$ may not be well-defined.
\end{remark}

\begin{lemma}\label{stdclosed}
The orbits of $G(X)$ on $X$ are the pairs $\{ e_{\alpha,+}, e_{\alpha,-}\}$ for $\alpha \in S$.  In particular, the set of axes $X = \{ e_{\alpha, \pm} : \alpha \in S\}$ is closed.
\end{lemma}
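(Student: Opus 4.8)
The plan is to show that $G(X)$ cannot merge two distinct pairs, so each pair $\{e_{\alpha,+},e_{\alpha,-}\}$ is a $G(X)$-orbit, and then closure is immediate since $X$ is the union of these orbits. The key point is that a Miyamoto involution $\tau_\beta$, acting by Proposition \ref{tauaction}, fixes every toral element $t_i$ and sends each codeword element $e^\gamma$ to $\pm e^\gamma$; hence every element of $G(X)$ acts \emph{diagonally} on the special basis $\{t_i\}\cup\{e^\gamma : \gamma \in C^*\}$, with the $t_i$ always fixed. Therefore, for any $g \in G(X)$, we have $g(\lambda t_\alpha + \mu e^\alpha) = \lambda t_\alpha \pm \mu e^\alpha \in \{e_{\alpha,+}, e_{\alpha,-}\}$. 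This already shows that the $G(X)$-orbit of $e_{\alpha,+}$ is contained in the pair $\{e_{\alpha,+},e_{\alpha,-}\}$, and in particular $X$ is closed.

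First I would record the structural fact from Proposition \ref{tauaction}: any $\tau_\gamma$ with $\gamma \in S$ fixes all $t_i$ and multiplies each $e^\beta$ by $+1$ or $-1$ according to whether $p(\beta)$ (the weight partition of $\beta$ with respect to $\gamma$, equivalently $|\gamma \cap \beta|$) lies in $wt(D_+)$ or $wt(D_-)$. Since $G(X)$ is generated by the $\tau_\gamma$ ($\gamma \in S$) — using Corollary \ref{plusminussame} that $\tau_{e_{\alpha,+}} = \tau_{e_{\alpha,-}} =: \tau_\alpha$ — every $g \in G(X)$ is a composition of such diagonal maps, hence itself diagonal on the special basis, fixing all $t_i$. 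Applying this to $e_{\alpha,+} = \lambda t_\alpha + \mu e^\alpha$: we get $g(e_{\alpha,+}) = \lambda t_\alpha + \varepsilon \mu e^\alpha$ for some $\varepsilon \in \{+1,-1\}$, which is exactly $e_{\alpha,\varepsilon}$. Thus $e_{\alpha,+}^{G(X)} \subseteq \{e_{\alpha,+},e_{\alpha,-}\}$.

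It remains to check the orbit is the \emph{whole} pair, i.e.\ that some element of $G(X)$ actually swaps $e_{\alpha,+}$ and $e_{\alpha,-}$; equivalently that $\tau_\alpha$ itself does. But $\tau_\alpha = \tau_{e_{\alpha,+}}$ negates exactly the codeword elements $e^\beta$ with $p(\beta) \in wt(D_-)$, and $e^\alpha$ has weight partition $(|\alpha|,0)$, which lies in $D_+$ (indeed $\1 \in D_+$ forces $(|\alpha|,0) \in wt(D_+)$), so $\tau_\alpha$ fixes $e^\alpha$ — that does \emph{not} swap the pair. So instead I would produce a $\beta \in S$ with $|\alpha \cap \beta|$ odd: then $p(\beta) \notin wt(D_+)$ (here one uses that $wt(D_+)$, being a union of weight sets of the codimension-one subcode $D_+ \ni \1$, consists of partitions with $|\gamma \cap \alpha|$ of a fixed parity determined by membership in $D_+$, and $e^\alpha$'s partition has even part $0$), so $\tau_\beta(e^\alpha) = -e^\alpha$ while $\tau_\beta(t_\alpha) = t_\alpha$, whence $\tau_\beta$ swaps $e_{\alpha,+}$ and $e_{\alpha,-}$. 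Such a $\beta$ exists because $C = \la S \ra$ is projective: if every $\beta \in S$ had $|\alpha \cap \beta|$ even, then $\alpha$ would be orthogonal to all of $S$ and hence to $C$, forcing $\alpha \in C^\perp$; but $\alpha \neq \0$ and $C$ projective means $C^\perp$ has minimum weight $\geq 3$, which is compatible only in degenerate small cases one checks directly (in cases (2) and (3), $|\alpha| = 2$ or $|\alpha| > 2$ with $D(\alpha)$ containing odd-weight codewords, so a suitable $\beta$ is readily found).

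The main obstacle is the last step: ruling out the possibility that $\tau_\alpha$ acts trivially on the pair for \emph{every} relevant $\alpha$, i.e.\ verifying in each of cases (2) and (3) of Theorem \ref{Z2grading} that there is always some $\beta \in S$ meeting $\alpha$ in an odd-size set. In case (2), $|\alpha| = 2$ and $\supp(\alpha) = \{i,j\}$ within a single even-weight component $C_i$ of length $m \geq 3$; one picks $\beta \in S$ supported on $\{i,k\}$ with $k \neq j$ (such exists since the $C_i$ are generated by weight-two codewords forming a connected graph), giving $|\alpha \cap \beta| = 1$. In case (3), $D = D(\alpha)$ is projective with $\1 \in D$ and $D_+$ a proper subcode, so $D_- = D \setminus D_+$ is nonempty, meaning there is a codeword of $C$ whose restriction to $\supp(\alpha)$ has weight in $wt(D_-)$; unpacking the $s$-map relationship between such codewords and elements of $S$ produces the required $\beta$. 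I would present this case analysis compactly, citing the descriptions of cases (2) and (3) from Theorem \ref{Z2grading} rather than re-deriving them.
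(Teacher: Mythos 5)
Your core argument is the one the paper uses: by Proposition \ref{tauaction} every generator $\tau_\beta$ of $G(X)$ fixes all the $t_i$ and scales each $e^\gamma$ by $\pm 1$, so every $g\in G(X)$ sends $\lambda t_\alpha+\mu e^\alpha$ to $\lambda t_\alpha\pm\mu e^\alpha$; hence each orbit lies inside a pair and $X$ is closed. The paper's proof is exactly this one-line dichotomy ($\tau_\beta$ fixes both of $e_{\alpha,\pm}$ when $p(\alpha)\in wt(D_+)$ and swaps them when $p(\alpha)\in wt(D_-)$) and, like you, it does not actually exhibit a swapping $\beta$ for every $\alpha$. Where you go beyond the paper --- trying to show the orbit is the \emph{whole} pair --- your argument has a genuine gap in case (3). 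You reduce ``some $\tau_\beta$ negates $e^\alpha$'' to ``some $\beta\in S$ meets $\alpha$ in an odd set,'' which tacitly assumes $wt(D_-(\beta))$ is exactly the set of odd weights. That holds in case (2), where $D=\mathbb{F}_2^2$ and $D_+=\{\0,\1\}$, but in case (3) $D_+$ is an arbitrary codimension-one subcode of $D(\beta)$ that is a union of weight sets; membership of $\proj_\beta(\alpha)$ in $D_-$ is determined by that subcode, not by the parity of $|\alpha\cap\beta|$. Moreover your fallback via projectivity fails precisely there: if $|\alpha|>2$, then $\alpha\in C^\perp$ is perfectly consistent with $C^\perp$ having minimum weight at least three, so no contradiction arises, and the concluding sentence about ``unpacking the $s$-map relationship'' does not produce the required $\beta$. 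None of this affects the closure assertion (the ``in particular''), which your proof, like the paper's, establishes correctly; but the claim that the orbits are \emph{equal to} the pairs is not proved by your case (3) argument, and the paper leaves that direction implicit as well.
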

\begin{proof}
Let $\alpha, \beta \in S$.  By Lemma \ref{tauaction}, $ \tau_{\beta}$ fixes $e_{\alpha, +}$ and $e_{\alpha, -}$ if $p(\alpha) \in D_+$ and swaps $e_{\alpha, +}$ and $e_{\alpha, -}$ if $p(\alpha) \in D_-$.
\end{proof}

\subsection{$|\alpha|=1$}

We now deal with the case where $|\alpha|=1$ as this behaves differently to the standard case. There are two subcases here where $A$ is $\mathbb{Z}_2$-graded.  Either $n=3$, or $n=2$ and $a=-1$.

\subsubsection{$n=2$ and $a=-1$}

Note that, as $C \cong \mathbb{F}_2^2$, $A$ decomposes as:
\[
A = A_1 \oplus A_2 = \langle t_1, e^{\alpha_1} \rangle \oplus \langle t_2, e^{\alpha_2} \rangle
\]
In particular, there are four small idempotents
\[
e_{i, \pm} := \lambda t_i \pm \mu_i e^{\alpha_i} \in A_i
\]
where $\lambda = \frac{1}{2a|\alpha_i|} = -\frac{1}{2}$ and $\mu_i^2 = \frac{\lambda - \lambda^2}{c_{\alpha_i}} = -\frac{3}{4 c_{\alpha_i}}$ for $i = 1,2$.  However, since this is the exception to Theorem \ref{csame}, $c_{\alpha_1}$ and $c_{\alpha_2}$ may differ, and so $\mu_1$ and $\mu_2$ need not be the same either. In fact, this does produce a valid $\mathbb{Z}_2$-graded algebra with a non-trivial Miyamoto group.

The fusion law has three parts $1$, $0$ and $\lambda - \frac{1}{2}$, where $\lambda - \frac{1}{2}$ is in the negatively graded part and $1$ and $0$ are in the positive part.  Since $\lambda = \frac{1}{2}$, $\phi(\lambda - \frac{1}{2}) = -1$ and all the parts are distinct.  Note that the $(\lambda - \frac{1}{2})$-part is $1$-dimensional and is spanned by $2\mu_i c_{\alpha_i} t_i - e^{\alpha_i}$.  We write $\tau_{i, \pm}$ for $\tau_{e_{i, \pm}}$.

\begin{lemma}\label{C2act}
Let $i,j = 1,2$ with $i \neq j$.  The automorphism $\tau_{i, \pm}$ acts trivially on $A_j$ and acts on $A_i = \langle t_i, e^{\alpha_i} \rangle$ as
\[
\begin{pmatrix}
-\frac{1}{2} & \mp \mu_i \\
\mp \frac{3}{4 \mu_i} & \frac{1}{2}
\end{pmatrix}
\]
\end{lemma}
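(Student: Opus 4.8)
The plan is to compute directly the action of $\tau_{i,\pm} = \tau_{e_{i,\pm}}(\chi_{-1})$ on the two canonical summands $A_i = \langle t_i, e^{\alpha_i}\rangle$ and $A_j = \langle t_j, e^{\alpha_j}\rangle$. Since $e_{i,\pm} \in A_i$ and the product of anything in $A_i$ with anything in $A_j$ is zero (because $t_i t_j = 0$, $t_i e^{\alpha_j} = 0$ since $(\alpha_j)_i = 0$, and $e^{\alpha_i} e^{\alpha_j} = 0$ since $\alpha_j = \alpha_i^c$), the adjoint action of $e_{i,\pm}$ kills $A_j$ entirely. Hence $A_j$ lies in the $0$-eigenspace, which is part of the positively graded piece, so $\tau_{i,\pm}$ fixes $A_j$ pointwise. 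This disposes of the first assertion.

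For the action on $A_i$, first I would identify the eigenvectors of $e_{i,\pm}$ inside $A_i$ using Table~\ref{tab:esp} specialised to $|\alpha_i| = 1$: the $1$-eigenvector is $e = \lambda t_i \pm \mu_i e^{\alpha_i} = e_{i,\pm}$ itself (with $\lambda = -\tfrac12$), and the $(\lambda - \tfrac12)$-part is spanned by $2\mu_i c_{\alpha_i} t_i \mp e^{\alpha_i}$ (with the sign flip coming from replacing $\mu$ by $-\mu$ for $e_{i,-}$, as flagged after the theorem quoting \cite[Proposition 3.5, Theorem 4.1]{CM19}). Since $\lambda - \tfrac12 = -1$ is negatively graded and $1$ is positively graded, $\tau_{i,\pm}$ fixes $e_{i,\pm}$ and negates $2\mu_i c_{\alpha_i} t_i \mp e^{\alpha_i}$. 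I would then solve the resulting linear system: writing $\tau_{i,\pm}$ as a $2\times 2$ matrix $M$ in the ordered basis $(t_i, e^{\alpha_i})$, the two conditions $M(\lambda t_i \pm \mu_i e^{\alpha_i}) = \lambda t_i \pm \mu_i e^{\alpha_i}$ and $M(2\mu_i c_{\alpha_i} t_i \mp e^{\alpha_i}) = -(2\mu_i c_{\alpha_i} t_i \mp e^{\alpha_i})$ determine all four entries. Substituting $\lambda = -\tfrac12$ and $\mu_i^2 = \tfrac{\lambda - \lambda^2}{c_{\alpha_i}} = -\tfrac{3}{4c_{\alpha_i}}$, i.e. $\mu_i c_{\alpha_i} = -\tfrac{3}{4\mu_i}$, should collapse the entries to exactly the claimed matrix $\begin{pmatrix} -\tfrac12 & \mp \mu_i \\ \mp \tfrac{3}{4\mu_i} & \tfrac12 \end{pmatrix}$.

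There is essentially no obstacle here; the only point requiring a little care is bookkeeping of signs. The matrix entry $\mp\tfrac{3}{4\mu_i}$ is where one must correctly track the effect of $\mu \mapsto -\mu$ when passing from $e_{i,+}$ to $e_{i,-}$, together with the relation $\mu_i c_{\alpha_i} = -\tfrac{3}{4\mu_i}$, and it is worth sanity-checking that the resulting matrix is an involution: indeed $M^2 = \begin{pmatrix} \tfrac14 + \tfrac34 & 0 \\ 0 & \tfrac34 + \tfrac14\end{pmatrix} = I$, confirming consistency with $\tau_{i,\pm}$ having order dividing $2$. I would also note that $\det M = -\tfrac14 - \tfrac34 = -1$ and $\operatorname{tr} M = 0$, exactly as expected for a reflection with one $+1$ and one $-1$ eigenvalue, and that the $+1$-eigenvector $\lambda t_i \pm \mu_i e^{\alpha_i}$ recovers $e_{i,\pm}$ as it should.
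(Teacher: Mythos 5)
Your proposal is correct and follows essentially the same route as the paper: identify the $1$-eigenvector $e_{i,\pm}$ and the $(\lambda-\tfrac12)$-eigenvector $2\mu_i c_{\alpha_i}t_i \mp e^{\alpha_i}$ spanning $A_i$, observe that $A_j$ lies in the positively graded part, and read off the matrix from the action on these two vectors using $\mu_i c_{\alpha_i} = -\tfrac{3}{4\mu_i}$. (Your matrix comes out as the transpose of the one stated, which is just the row-vector versus column-vector convention.)
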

\begin{proof}
We have $\langle \lambda t_i + \mu_i e^{\alpha_i}, 2\mu_i c_{\alpha_i} t_i - e^{\alpha_i} \rangle = \langle t_i, e^{\alpha_i} \rangle = A_i$ and so $A_i$ is spanned by the $1$- and $-1$-eigenspaces.  In particular, $\tau_{i, \pm}$ must act trivially on $A_j$.  One can check that $\tau_{i, \pm}$ acts on $A_i$ as given by the matrix above by checking the action on the two vectors $\lambda t_i + \mu_i e^{\alpha_i}$ and $2\mu_i c_{\alpha_i} t_i - e^{\alpha_i}$.
\end{proof}

Note that in this case, unlike the Miyamoto involution in the standard case, $\tau_{i,\pm}$ does depend on $\mu_i$ and hence on the structure constant $c_{\alpha_i}$.  Also, $\tau_{i,+} \neq \tau_{i,-}$.

\begin{lemma}
The subgroup $G_i := \langle \tau_{i,+}, \tau_{i,-} \rangle \cong S_3$.
\end{lemma}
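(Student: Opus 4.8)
The plan is to show that $G_i := \langle \tau_{i,+}, \tau_{i,-} \rangle$ acts faithfully on the $2$-dimensional subspace $A_i = \langle t_i, e^{\alpha_i} \rangle$ (it acts trivially on $A_j$ by Lemma \ref{C2act}), identify this action with a group of $2\times 2$ matrices, and recognise that group as $S_3$. Since $|S_3| = 6$ and $S_3$ is generated by two involutions, and $\tau_{i,+}, \tau_{i,-}$ are distinct involutions (noted just before the statement), it suffices to show that the product $\tau_{i,+}\tau_{i,-}$ has order $3$: a group generated by two distinct involutions whose product has order $n$ is dihedral of order $2n$, so this forces $G_i \cong D_6 \cong S_3$.

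First I would write down the matrices for $\tau_{i,+}$ and $\tau_{i,-}$ on $A_i$ from Lemma \ref{C2act}: with respect to the basis $\{t_i, e^{\alpha_i}\}$ these are
\[
M_+ = \begin{pmatrix} -\tfrac{1}{2} & -\mu_i \\ -\tfrac{3}{4\mu_i} & \tfrac{1}{2} \end{pmatrix}, \qquad
M_- = \begin{pmatrix} -\tfrac{1}{2} & \mu_i \\ \tfrac{3}{4\mu_i} & \tfrac{1}{2} \end{pmatrix}.
\]
Then I would compute the product $M_+ M_-$ and check directly that it has trace $-1$ and determinant $1$ (one can verify $M_\pm$ each have determinant $-1$, consistent with being reflections, so the product has determinant $1$). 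A $2\times 2$ matrix with determinant $1$ and trace $-1$ satisfies $x^2 + x + 1 = 0$, hence has order $3$ (its eigenvalues are the primitive cube roots of unity, and since char$(\mathbb{F}) \neq 2, 3$ — which follows from the Axis Hypothesis ruling out $a = \frac{1}{3|\alpha|}$, and here $|\alpha| = 1$, $a = -1$, so $3 \neq 0$ — it is diagonalisable and genuinely of order $3$). A small subtlety to flag: one must confirm $\mu_i \neq 0$ so the matrices are well-defined, which holds since the code algebra is non-degenerate so $c_{\alpha_i} \neq 0$ and hence $\mu_i^2 = -\tfrac{3}{4c_{\alpha_i}} \neq 0$.

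The main obstacle — really the only point requiring care — is verifying that $\tau_{i,+}$ and $\tau_{i,-}$ are distinct and that neither product collapses: the $\mu_i$-dependence must cancel in $M_+M_-$ (which it does, since the off-diagonal $\mu_i$ factors multiply to give $\mu_i \cdot \tfrac{3}{4\mu_i}$-type terms), and one needs char$(\mathbb{F}) \neq 3$ for order exactly $3$ rather than $1$. Once $M_+ M_-$ is seen to have order $3$, I would conclude: $G_i$ is a quotient of the infinite dihedral group $\langle r, s \mid r^2 = s^2 = 1\rangle$ by the relation $(rs)^3 = 1$, i.e. a quotient of $D_6$; since $\tau_{i,+} \neq \tau_{i,-}$, the quotient is not the Klein four group or smaller, and comparing orders gives $G_i \cong D_6 \cong S_3$. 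Finally I would note the action on $A_i$ is faithful — any element acting trivially on $A_i$ and $A_j$ acts trivially on all of $A = A_i \oplus A_j$ — completing the identification $G_i \cong S_3$.
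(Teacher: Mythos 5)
Your proof is correct and follows essentially the same route as the paper: both identify $G_i$ as a dihedral group generated by the two distinct involutions and verify that the product $\tau_{i,+}\tau_{i,-}$, computed as a $2\times 2$ matrix on $A_i$, has order three. One small remark: your justification that $\mathrm{char}(\mathbb{F})\neq 3$ via the Axis Hypothesis does not actually follow (with $|\alpha|=1$ and $a=-1$ the condition $a\neq\tfrac{1}{3|\alpha|}$ only excludes characteristic $2$), but this is immaterial since the characteristic polynomial $x^2+x+1$ divides $x^3-1$ and the matrix is visibly not the identity (its $(1,2)$ entry is $-\mu_i\neq 0$), so the product has order three in any case.
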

\begin{proof}
The automorphisms $\tau_{i,+}$ and $\tau_{i,-}$ are involutions so they generate a dihedral group.  Calculating the action of $\tau_{i,+} \tau_{i,-}$ on $A_i$, we have
\[
\begin{pmatrix}
-\frac{1}{2} & - \mu_i \\
- \frac{3}{4 \mu_i} & \frac{1}{2}
\end{pmatrix}
\begin{pmatrix}
-\frac{1}{2} & \mu_i \\
\frac{3}{4 \mu_i} & \frac{1}{2}
\end{pmatrix}
 =
 \begin{pmatrix}
-\frac{1}{2} & - \mu_i \\
\frac{3}{4 \mu_i} & -\frac{1}{2}
\end{pmatrix}
\]
It is straightforward to check that this has order three.
\end{proof}

Note that although the action of $G_i$ on the space $A_i$ does depend on $\mu_i$ and so on the structure constant $c_{\alpha_i}$, the isomorphism type of the group $G_i$ does not.

\begin{corollary}
The Miyamoto group is isomorphic to $S_3 \times S_3$.
\end{corollary}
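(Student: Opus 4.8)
The goal is to show that the Miyamoto group in the case $|\alpha|=1$, $n=2$, $a=-1$ is isomorphic to $S_3 \times S_3$. The plan is to assemble this from the structural facts already established: the algebra splits as $A = A_1 \oplus A_2$ with $A_i = \langle t_i, e^{\alpha_i}\rangle$, each $A_i$ carries the four small idempotents $e_{i,\pm}$, and we have just shown that $G_i := \langle \tau_{i,+}, \tau_{i,-}\rangle \cong S_3$.

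First I would recall that the full Miyamoto group is $G = G(X) = \langle \tau_a : a \in X\rangle$ where $X = \{e_{1,+}, e_{1,-}, e_{2,+}, e_{2,-}\}$, so $G = \langle \tau_{1,+}, \tau_{1,-}, \tau_{2,+}, \tau_{2,-}\rangle = \langle G_1, G_2\rangle$. The key observation, supplied by Lemma \ref{C2act}, is that each generator $\tau_{i,\pm}$ acts trivially on $A_j$ for $j \neq i$ and acts as an invertible $2\times 2$ matrix on $A_i$. Consequently, every element of $G_i$ fixes $A_j$ pointwise and stabilises $A_i$, while every element of $G_j$ fixes $A_i$ pointwise. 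In particular, any $g \in G_1$ and any $h \in G_2$ commute: on $A_1$, $h$ acts trivially so $gh$ and $hg$ both act as $g$; on $A_2$, $g$ acts trivially so both act as $h$; since $A = A_1 \oplus A_2$ this shows $gh = hg$. Hence $G = G_1 G_2$ with $G_1, G_2$ commuting, so $G$ is a quotient of $G_1 \times G_2$.

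It remains to check that the natural surjection $G_1 \times G_2 \to G$ is injective, i.e. $G_1 \cap G_2 = 1$. This is immediate from the action: a nontrivial element of $G_1$ acts nontrivially on $A_1$ (since $G_1 \cong S_3$ acts faithfully there, as the matrices exhibited in the previous lemma are pairwise distinct and nontrivial), whereas every element of $G_2$ acts trivially on $A_1$; so the only element in the intersection is the identity. Therefore $G \cong G_1 \times G_2 \cong S_3 \times S_3$.

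I do not anticipate a genuine obstacle here — the result is essentially bookkeeping once the two lemmas are in hand. The one point requiring a small amount of care is the faithfulness of the $G_i$-action on $A_i$ (needed for $G_1 \cap G_2 = 1$): one should note that $S_3$ has no nontrivial normal subgroup acting trivially, or simply observe directly from the matrix computations in the proof of $G_i \cong S_3$ that no nonidentity word in $\tau_{i,+}, \tau_{i,-}$ is the identity matrix on $A_i$. Everything else follows formally from the direct-sum decomposition and the localisation of each axial subgroup's action to a single summand.
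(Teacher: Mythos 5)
Your argument is correct and is exactly the (implicit) argument the paper intends: the corollary is stated without proof because, given Lemma \ref{C2act} and $G_i \cong S_3$, the groups $G_1$ and $G_2$ act on complementary summands of $A = A_1 \oplus A_2$, hence commute and intersect trivially. Your extra remark on faithfulness is sound — indeed it is automatic, since $G_i \leq \Aut(A)$ acts trivially on $A_j$, so any element acting trivially on $A_i$ as well is the identity automorphism.
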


\begin{lemma}
The orbit of $e_{i, \pm}$ under $G_i$ is $\{ t_i, e_{i,+}, e_{i,-} \}$.  Hence the closure of $\{ e_{i,\pm} : i = 1,2\}$ is $\{t_1, t_2, \} \cup \{ e_{i,\pm} : i = 1,2\}$.
\end{lemma}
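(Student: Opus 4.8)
The plan is to compute the $G_i$-orbit of $e_{i,+}$ (equivalently $e_{i,-}$) directly from the matrix description in Lemma \ref{C2act}, and then assemble the closure of $\{e_{i,\pm} : i = 1,2\}$ from these two orbits. First I would record that $G_i$ acts trivially on $A_j$ for $j \neq i$ (by Lemma \ref{C2act}), so the entire orbit of $e_{i,\pm}$, which lies in $A_i$, stays inside $A_i$; in particular the two orbits (for $i=1$ and $i=2$) are disjoint and the closure is the disjoint union of them together with whatever extra fixed points turn up. Then within $A_i = \langle t_i, e^{\alpha_i}\rangle$ I would apply $\tau_{i,+}$ and $\tau_{i,-}$ to $e_{i,+} = \lambda t_i + \mu_i e^{\alpha_i} = -\tfrac12 t_i + \mu_i e^{\alpha_i}$. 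Since $e_{i,+}$ is the $1$-eigenvector of $\tau_{i,+}$, it is fixed by $\tau_{i,+}$, so the only nontrivial image at the first step is $\tau_{i,-}(e_{i,+})$; using the matrix for $\tau_{i,-}$ one checks this equals $t_i$. Applying $\tau_{i,+}$ to $t_i$ then gives $e_{i,-}$ (and symmetrically $\tau_{i,-}$ fixes $e_{i,-}$, $\tau_{i,+}(e_{i,-}) = t_i$), so the orbit closes up as $\{t_i, e_{i,+}, e_{i,-}\}$ — a $3$-element set permuted by $G_i \cong S_3$ exactly as the natural degree-$3$ action, which is consistent with $|G_i| = 6$.

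The computational core is the two matrix-vector products, which are routine: writing $e_{i,+}$ and $e_{i,-}$ and $t_i$ in the basis $(t_i, e^{\alpha_i})$ as $(-\tfrac12, \mu_i)$, $(-\tfrac12, -\mu_i)$, $(1,0)$ respectively, one multiplies by $\begin{pmatrix} -\tfrac12 & \mp\mu_i \\ \mp\tfrac{3}{4\mu_i} & \tfrac12\end{pmatrix}$ and reads off the result, using $\mu_i^2 = -\tfrac{3}{4c_{\alpha_i}}$ only to simplify (in fact the relation needed is just that the $(2,1)$ entry times $\mu_i$ equals $-\tfrac34$, which is built into the matrix). I would present just the statement of the two resulting identities $\tau_{i,-}(e_{i,+}) = t_i$ and $\tau_{i,+}(t_i) = e_{i,-}$ and note that the other cases follow by the obvious $+\leftrightarrow-$ symmetry.

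For the global closure: $G(X) = G_1 \times G_2$ acts on $X = \{e_{1,\pm}, e_{2,\pm}\}$, and since $G_j$ fixes $A_i$ pointwise for $j \neq i$, the orbit of $e_{i,\pm}$ under the full group equals its orbit under $G_i$, namely $\{t_i, e_{i,+}, e_{i,-}\}$. Taking the union over $i = 1, 2$ gives $\overline{X} = \{t_1, t_2\} \cup \{e_{i,\pm} : i = 1,2\}$. I do not anticipate a genuine obstacle here; the only point requiring a little care is making sure that $t_i$ really does lie in the orbit (i.e. that $\tau_{i,-}(e_{i,+})$ is a scalar multiple of $t_i$ and that the scalar is exactly $1$, not merely proportional), which is immediate from the matrix but worth stating explicitly so the reader sees why the orbit has size $3$ and not more.
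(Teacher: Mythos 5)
Your proposal is correct and follows essentially the same route as the paper: both arguments read off from Lemma \ref{C2act} that $\tau_{i,\mp}$ sends $t_i$ to $e_{i,\pm}$ (equivalently $\tau_{i,-}(e_{i,+})=t_i$ and $\tau_{i,+}(t_i)=e_{i,-}$), note that $e_{i,\pm}$ is fixed by its own involution, and conclude the orbit is the three-element set $\{t_i,e_{i,+},e_{i,-}\}$, with the closure obtained as the union over $i=1,2$ since $G_j$ acts trivially on $A_i$ for $j\neq i$. The only cosmetic quibble is that the closure $\bar X=X^{G(X)}$ is exactly the union of the orbits of the elements of $X$, so there is no need to worry about ``extra fixed points turning up.''
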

\begin{proof}
By Lemma \ref{C2act}, it is easy to see that the involution $\tau_{i,\pm}$ maps $t_i$ to $e_{i, \mp}$.  Since by definition it fixes $e_{i,\pm}$ and $G_i \cong S_3$, the orbit of $e_{i, \pm}$ is $\{ t_i, e_{i,+}, e_{i,-} \}$.
\end{proof}

\subsubsection{$n=3$}

In this case, $e$ has five parts, $1$, $0$, $\lambda - \frac{1}{2}$ and $p^\pm$, where $p = (0,1)$ is the only partition.  Here $p^\pm$ are in the negative part and $1$, $0$, $\lambda - \frac{1}{2}$ are in the positive part.

\begin{lemma}
We have $\tau_{e_+} = \tau_{e, -}$ and the Miyamoto group is the Klein four group $2^2$.
\end{lemma}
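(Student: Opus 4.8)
The plan is to write the four small idempotents down explicitly, compute the action of each Miyamoto involution on the standard basis by imitating the proof of Proposition~\ref{tauaction}, and then read off the group from these actions.

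First I would fix the combinatorial setup. Since $C = \mathbb{F}_2^3$ and every $\alpha\in S$ has $|\alpha|=1$ while $\la S\ra = C$, necessarily $S = \{\alpha_1,\alpha_2,\alpha_3\}$ with $\supp(\alpha_\ell)=\{\ell\}$, so that $X = \{ e_{\alpha_\ell,\pm} = \lambda t_\ell \pm \mu e^{\alpha_\ell} : \ell = 1,2,3\}$, where by Theorem~\ref{csame} the constant $\mu$ is the same for all three. Fixing $i$ and writing $\{j,k\}=\{1,2,3\}\setminus\{i\}$, we have $C^*\setminus\{\alpha_i,\alpha_i^c\} = \{\alpha_j,\alpha_k,\alpha_j^c,\alpha_k^c\}$, every member of which has weight partition $p=(0,1)$ with respect to $\alpha_i$; hence $P_{\alpha_i}=\{p\}$ and, as already recorded in the text, $e:=e_{\alpha_i,+}$ has the five parts $1,0,\lambda-\tfrac12,p^+,p^-$ with $p^\pm$ negatively graded and $1,0,\lambda-\tfrac12$ positively graded (there is no $\lambda$-part since $\supp(\alpha_i)$ is a single point).

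Next I would determine the action of $\tau_e$. Exactly as in Proposition~\ref{tauaction}, $\tau_e$ fixes $A_1\oplus A_0\oplus A_{\lambda-1/2}$ pointwise and negates $A_{p^+}\oplus A_{p^-}$. From Table~\ref{tab:esp}, $\tau_e$ fixes $t_j,t_k,e^{\alpha_i^c}$ (in $A_0$), $e=\lambda t_i+\mu e^{\alpha_i}$ (in $A_1$) and $v:=2\mu c\, t_i - e^{\alpha_i}$ (in $A_{\lambda-1/2}$); since $e+\mu v = \lambda(3-2\lambda)\, t_i$ and $\lambda\neq 0,\tfrac32$ by the Axis Hypothesis (equivalently $a\neq \tfrac13$), it follows that $t_i$, and then $e^{\alpha_i}=\mu^{-1}(e-\lambda t_i)$, are fixed as well. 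For each remaining $\beta\in C^*\setminus\{\alpha_i,\alpha_i^c\}$ we have $(\theta_\beta^+-\theta_\beta^-)\, e^\beta = w_\beta^+ - w_\beta^-$ with $w_\beta^\pm\in A_{p^\pm}$ and $\theta_\beta^+\neq\theta_\beta^-$ by the Field Hypothesis, so $e^\beta$ is negated. Thus $\tau_e$ fixes $t_1,t_2,t_3,e^{\alpha_i},e^{\alpha_i^c}$ and negates the other four codeword elements; this description makes no reference to the sign of $\mu$ or of the $\theta$'s, so it holds verbatim for $e_{\alpha_i,-}$, giving $\tau_{e_+}=\tau_{e_-}$. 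Write $\tau_{\alpha_i}$ for this common involution.

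Finally I would compute $G = \la\tau_{\alpha_1},\tau_{\alpha_2},\tau_{\alpha_3}\ra$. The three involutions are simultaneously diagonal in the standard basis, hence commute, so $G$ is an elementary abelian $2$-group. The key observation is the relation $\tau_{\alpha_1}\tau_{\alpha_2}\tau_{\alpha_3}=\id$: on each $e^{\alpha_\ell}$ or $e^{\alpha_\ell^c}$ exactly one of the three factors acts as $+1$ and the other two as $-1$, so the product fixes every codeword element and fixes every $t_\ell$ trivially. Hence $G=\la\tau_{\alpha_1},\tau_{\alpha_2}\ra$ has order at most $4$; and since $\tau_{\alpha_1}$, $\tau_{\alpha_2}$ and $\tau_{\alpha_1}\tau_{\alpha_2}=\tau_{\alpha_3}$ are pairwise distinct and nontrivial (compare their signs on $e^{\alpha_1}$ and on $e^{\alpha_3}$), we conclude $|G|=4$ and $G\cong 2^2$. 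There is no genuinely hard step: the content is the bookkeeping that the negatively graded part of $e_{\alpha_i,\pm}$ is spanned by precisely $e^{\alpha_j},e^{\alpha_k},e^{\alpha_j^c},e^{\alpha_k^c}$ (immediate from Table~\ref{tab:esp} once one notes $P_{\alpha_i}$ is a single partition) together with spotting the relation $\tau_{\alpha_1}\tau_{\alpha_2}\tau_{\alpha_3}=\id$; the one point needing care is invoking the Axis Hypothesis so that $t_i$ is genuinely recovered from the fixed vectors $e$ and $v$.
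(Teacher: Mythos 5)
Your proposal is correct and follows essentially the same route as the paper: compute the sign of $\tau_{e_{\alpha_i,\pm}}$ on each standard basis vector (fixing $t_1,t_2,t_3,e^{\alpha_i},e^{\alpha_i^c}$ and negating the other four codeword elements), note the independence from the sign of $\mu$, and deduce $\tau_{\alpha_i}\tau_{\alpha_j}=\tau_{\alpha_k}$. You merely spell out a detail the paper leaves implicit, namely that $t_i$ and $e^{\alpha_i}$ lie in the span of the positively graded part because $\lambda(3-2\lambda)\neq 0$ under the Axis Hypothesis.
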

\begin{proof}
In a similar way to Proposition \ref{tauaction}, we see that $\tau_{\alpha, \pm}$ acts trivially on the $t_i$ and on $e^\alpha$ and $e^{\alpha^c}$ and inverts all the $e^\beta$ where $\beta \in C^* -\{ \alpha, \alpha^c\}$.  Hence we have $\tau_{\alpha, +} = \tau_{\alpha, -}$.  

Let $\alpha_i$ be the codeword with a $1$ in the $i$\textsuperscript{th} position and $0$s elsewhere.  So, $S = \{ \alpha_i : i = 1,2,3 \}$.  Note that $C^* = S \cup S^c$, where $S^c = \{ \alpha^c : \alpha \in S \}$.  Let $\{i,j,k\} = \{1,2,3\}$.  Now, $\tau_{\alpha_i}$ inverts all $e^\beta$ where $\beta \in C^* -\{ \alpha_i, \alpha_i^c\}$.  In particular, nothing is fixed by both $\tau_{\alpha_i}$ and $\tau_{\alpha_j}$ and only $e^{\alpha_k}$ and $e^{\alpha_k^c}$ are inverted by both.  In other words, $\tau_{\alpha_i}\tau_{\alpha_j}$ inverts all $e^\beta$ where $\beta \in C^* -\{ \alpha_k, \alpha_k^c\}$.  Hence $\tau_{\alpha_i}\tau_{\alpha_j} = \tau_{\alpha_k}$ and the Miyamoto group is a Klein four group.
\end{proof}

\begin{lemma}
The orbits of $G(X)$ on $X$ are the pairs $\{ e_{\alpha,+}, e_{\alpha,-}\}$ for $\alpha \in S$.  In particular, the set of axes $X = \{ e_{\alpha, \pm} :  \alpha \in S\}$ is closed.
\end{lemma}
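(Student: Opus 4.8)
The plan is a direct computation of the action of the Miyamoto group on $X$, using the description of $\tau_\alpha$ obtained in the previous lemma. Recall from there that $\tau_{e_+} = \tau_{e_-} =: \tau_\alpha$ acts trivially on every toral element, fixes $e^\alpha$ and $e^{\alpha^c}$, and inverts $e^\beta$ for all $\beta \in C^* \setminus \{\alpha, \alpha^c\}$; and that $S = \{\alpha_1, \alpha_2, \alpha_3\}$, where $\alpha_i$ has a single $1$ in position $i$, so that $e_{\alpha_i, \pm} = \lambda t_i \pm \mu e^{\alpha_i}$ (using $t_{\alpha_i} = t_i$ since $|\alpha_i| = 1$).

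First I would fix $i$ and evaluate $\tau_{\alpha_j}(e_{\alpha_i,\pm})$ for each $j$. When $j = i$ this is immediate: $\tau_{\alpha_i}$ fixes both $t_i$ and $e^{\alpha_i}$, hence fixes $e_{\alpha_i,\pm}$. When $j \neq i$ I would observe that $\alpha_i \notin \{\alpha_j, \alpha_j^c\}$ — the first because $i \neq j$, the second because $|\alpha_j^c| = 2 \neq 1$ — so $\tau_{\alpha_j}$ inverts $e^{\alpha_i}$ while fixing $t_i$, giving $\tau_{\alpha_j}(e_{\alpha_i,\pm}) = \lambda t_i \mp \mu e^{\alpha_i} = e_{\alpha_i,\mp}$. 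Thus every generator of $G(X)$ permutes $X$ within the pairs $\{e_{\alpha_i,+}, e_{\alpha_i,-}\}$, and for $j \neq i$ the generator $\tau_{\alpha_j}$ genuinely interchanges the two members of the $i$-th pair (note $\mu \neq 0$ by the Axis Hypothesis, so these are distinct axes). Hence the $G(X)$-orbit of each axis is exactly its pair, which is the first assertion.

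The second assertion is then immediate: $X$ is the disjoint union of these three orbits, so $X^{G(X)} = X$, i.e.\ $X$ is closed. There is no real obstacle here; the only points needing a moment's care are the weight bookkeeping that places $\alpha_i$ outside $\{\alpha_j,\alpha_j^c\}$ and the reduction $t_{\alpha_i} = t_i$ when $|\alpha_i|=1$, both of which are elementary.
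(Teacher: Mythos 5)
Your proof is correct and follows essentially the same route as the paper, which simply reads off from the explicit action of $\tau_{\alpha_j}$ on the basis (trivial on all $t_i$, inverting $e^{\alpha_i}$ exactly when $\alpha_i \neq \alpha_j, \alpha_j^c$) that each generator fixes or swaps the members of each pair $\{e_{\alpha_i,+}, e_{\alpha_i,-}\}$. The small checks you flag ($|\alpha_j^c| = 2 \neq 1$ and $t_{\alpha_i} = t_i$) are exactly the right ones, and the existence of some $j \neq i$ guarantees each pair is a full orbit rather than two singletons.
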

\begin{proof}
The proof is analogous to the proof of Lemma \ref{stdclosed}.
\end{proof}


\subsection{The $\mathbb{Z}_2 \times \mathbb{Z}_2$-graded case}\label{sec:Z2}

In this case, $|\alpha| = 2$ and $C := \bigoplus_{i=1}^r C_i$, where the $C_i$ all have length $m$.  From the previous paper we have:

\begin{proposition}\textup{\cite[Proposition 5.10]{CM19}}\label{Z2xZ2}
Let $C =\bigoplus_{i=1}^r C_i$ be the direct sum of even weight codes $C_i$ all of length $m$, $n = m^r$ such that $n \geq 5$ and $m \geq 3$.  Assume $|\alpha|= 2$ for $\alpha \in S$ and let $A_C$ be a code algebra satisfying the Axis Hypothesis with respect to $S$.  Assume further that $b_{\beta, \gamma} = b_{\alpha+\beta, \gamma}$ and $c_\beta = c_{\alpha+\beta}$ for all $\alpha \in S$, $\beta \in C_{\alpha}(1)$ and $\gamma \in C^*\setminus \{\alpha, \alpha^c\}$.  Then, the axial decomposition algebra $A_C$ has a fusion law given by Table $\ref{tab:al2special}$ and has a $\mathbb{Z}_2 \times \mathbb{Z}_2$-grading given by
\begin{align*}
A_{(0,0)} &= A_1 \oplus A_0 \oplus A_{\lambda - \frac{1}{2}} \oplus A_{p_0^+} \oplus A_{p_0^-}  \\
A_{(1,0)} &= A_{p_1^+}  \\
A_{(0,1)} &= A_{p_1^-}  \\
A_{(1,1)} &= A_{\lambda}
\end{align*}

\begin{table}[!htb]
\setlength{\tabcolsep}{7pt}
\renewcommand{\arraystretch}{1.5}
\centering
\begin{tabular}{c|ccccccccc}
 & $1$ & $0$ & $\lambda$ & $\lambda-\frac{1}{2}$ & ${p_1^+}$ & ${p_1^-}$ & ${p_0^+}$ & ${p_0^-}$ \\ \hline
$1$ & $1$ &  & $\lambda$ & $\lambda-\frac{1}{2}$ & ${p_1^+}$ & ${p_1^-}$ & ${p_0^+}$ & ${p_0^-}$  \\
$0$ &  & $0$  & &    & ${p_1^+}$ & ${p_1^-}$ & ${p_0^+}$ & ${p_0^-}$ \\
$\lambda$ & $\lambda$ & & $1, \lambda - \frac{1}{2}$ & & ${p_1^-}$  & ${p_1^+}$   \\ 
$\lambda-\frac{1}{2}$ & $\lambda-\frac{1}{2}$& &   & $1, \lambda-\frac{1}{2}$ & ${p_1^+}$ & ${p_1^-}$ & ${p_0^+}, {p_0^-}$ &  ${p_0^+}, {p_0^-}$ \\
${p_1^+}$ & ${p_1^+}$ & ${p_1^+}$  & ${p_1^-}$  &  ${p_1^+}$ & $X$ & $\lambda$ &  ${p_1^+} $ &  ${p_1^+}$ \\
${p_1^-}$ & ${p_1^-}$ & ${p_1^-}$  &  ${p_1^+}$ &  ${p_1^-}$  & $\lambda$  & $X$ &  ${p_1^-} $ &  ${p_1^-} $ \\
${p_0^+}$ & ${p_0^+}$ & ${p_0^+}$  & & ${p_0^+}, {p_0^-}$ & $ {p_1^+}$ & $ {p_1^-} $ & $X$ & $X$ \\
${p_0^-}$ & ${p_0^-}$ & ${p_0^-}$  &  & ${p_0^+}, {p_0^-}$ & $ {p_1^+}$ & ${p_1^-} $  & $X$ & $X$
\end{tabular}
\vspace{5pt}\\
where $X = 1,0, \lambda - \frac{1}{2}, {p_0^+}, {p_0^-}$
\caption{Fusion law for $\vert \alpha \vert = 2$}
\label{tab:al2special}
\end{table}

\end{proposition}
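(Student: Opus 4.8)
The plan is to obtain Table \ref{tab:al2special} as a specialisation of the general fusion law of Table \ref{tab:small} under the two extra structure-constant hypotheses, and then to verify the $\mathbb{Z}_2 \times \mathbb{Z}_2$-grading by a finite inspection of the resulting table.

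First I would pin down the set $P_\alpha$ of weight partitions. Since each $C_i$ is an even-weight code, a weight-$2$ codeword $\alpha$ is supported inside a single block $C_i$; hence, for any $\beta \in C^* \setminus \{\alpha, \alpha^c\}$ we have $|\alpha \cap \beta| \in \{0, 1, 2\}$, and modulo the unordered convention $p(\beta) = p(\beta^c)$ the only partitions are $p_0 := (0,2)$ and $p_1 := (1,1)$, so $P_\alpha = \{p_0, p_1\}$ generically. Two facts then come for free from this combinatorics, before invoking the extra hypotheses: (i) $\xi_\beta = 0$ whenever $p(\beta) = p_1$, because $\xi_\beta$ is proportional to $1 - \tfrac{2|\alpha\cap\beta|}{|\alpha|} = 0$, so $\theta_\beta^\pm = \pm 1$; and (ii) if $p(\beta) = p(\gamma) = p_1$ then $p(\beta + \gamma) = p_0$ (distinguish the two cases for how $\supp(\beta)$ and $\supp(\gamma)$ meet the two points of $\supp(\alpha)$), so the new-partition sets $N(p_1, p_1)$ appearing in Table \ref{tab:small} already lie in $\{p_0^+, p_0^-\}$.

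Next I would recompute, entry by entry, the products in the general fusion law for these two partitions, using $S$-intersection regularity together with the hypotheses $b_{\beta,\gamma} = b_{\alpha+\beta,\gamma}$ and $c_\beta = c_{\alpha+\beta}$ for $\beta \in C_\alpha(p_1)$. The decisive computations are the products of the part eigenvectors $w_\beta^\pm = \theta_\beta^\pm e^\beta + e^{\alpha+\beta}$ of Table \ref{tab:esp}: one expands $w_\beta^\iota w_\gamma^\kappa$ via Definition \ref{CodeAlgebra}, collects the $t_i$-, $e^\alpha$- and $e^\delta$-terms, and reads off which parts occur by comparison with Table \ref{tab:esp} (quoting the analogues of \cite[Lemmas 4.6, 4.9, 4.11, 4.12]{CM19} for the entries that do not change). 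For example, when $p(\beta) = p_1$ one has $\theta_\beta^+\theta_\beta^- = -1$, so, using $\xi_\beta = 0$ and $c_\beta = c_{\alpha+\beta}$,
\[
w_\beta^+ w_\beta^- = -c_\beta\,t_\beta + c_{\alpha+\beta}\,t_{\alpha+\beta} - 2\xi_\beta b_{\beta,\alpha+\beta}\,e^\alpha = c_\beta\,(t_{\alpha+\beta} - t_\beta);
\]
since $\supp(\beta)$ and $\supp(\alpha+\beta)$ differ exactly inside $\supp(\alpha)$, the vector $t_{\alpha+\beta} - t_\beta$ lies in the $\lambda$-part, which gives $p_1^+ \star p_1^- = \{\lambda\}$. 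Analogous computations handle the splitting of the block $X_1$ of Table \ref{tab:small} into $p_1^+ \star p_1^+ = p_1^- \star p_1^- = X := \{1, 0, \lambda - \tfrac12, p_0^+, p_0^-\}$ and $p_1^+ \star p_1^- = \{\lambda\}$, the removal of the $\lambda$-summand from $\lambda \star \lambda$, the collapse of $\lambda \star p_i^\epsilon$ to the singleton $\{p_i^{-\epsilon}\}$ (and to $\emptyset$ when $i = 0$), and $p_0^\epsilon \star (\lambda - \tfrac12) = \{p_0^+, p_0^-\}$; in each case the role of the two hypotheses is to force the contributions that are symmetric in the two points of $\supp(\alpha)$ (those coming from $t_\beta + t_{\alpha+\beta}$, from $e^\alpha$, or from $e^{\beta+\gamma}$ carried with the opposite $\theta$-labelling) to cancel, so that what survives sits in a single part.

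With Table \ref{tab:al2special} established, the grading is a routine finite check: put $1, 0, \lambda - \tfrac12, p_0^+, p_0^-$ in $\mathcal{F}_{(0,0)}$, $p_1^+$ in $\mathcal{F}_{(1,0)}$, $p_1^-$ in $\mathcal{F}_{(0,1)}$ and $\lambda$ in $\mathcal{F}_{(1,1)}$, and confirm $\mathcal{F}_s \star \mathcal{F}_t \subseteq \mathcal{F}_{st}$ for every pair $s, t \in \mathbb{Z}_2 \times \mathbb{Z}_2$ by inspecting the table --- for instance $p_1^+ \star p_1^- = \{\lambda\}$ matches $(1,0)(0,1) = (1,1)$, $\lambda \star \lambda = \{1, \lambda - \tfrac12\}$ matches $(1,1)^2 = (0,0)$, and $\lambda \star p_1^+ = \{p_1^-\}$ matches $(1,1)(1,0) = (0,1)$. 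Reading off $A_{(s,t)} = \bigoplus_{x \in \mathcal{F}_{(s,t)}} A_x$ then gives the displayed decomposition. The main obstacle I anticipate is bookkeeping rather than ideas: one must be scrupulous about the unordered-pair convention for partitions, about which of $e^\beta, e^{\alpha+\beta}$ carries $\theta_\beta^+$ versus $\theta_\beta^-$ (a consistent choice has to be fixed, and the entry $\lambda \star p_i^\epsilon = \{p_i^{-\epsilon}\}$ is sensitive to it), and about enumerating the ways two codewords of a given partition can overlap inside and outside $\supp(\alpha)$; getting the cancellations forced by $b_{\beta,\gamma} = b_{\alpha+\beta,\gamma}$ and $c_\beta = c_{\alpha+\beta}$ to land exactly on the entries that must shrink --- so that the $\mathbb{Z}_2$-grading of Theorem \ref{Z2grading} refines to a $\mathbb{Z}_2 \times \mathbb{Z}_2$-grading --- is the delicate point.
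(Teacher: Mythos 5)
Your proposal is correct. The paper itself gives no proof of this proposition --- it is imported verbatim from \cite[Proposition 5.10]{CM19} --- and your derivation (specialise the general fusion law of Table \ref{tab:small} to $P_\alpha = \{p_0, p_1\}$, use $\xi_\beta = 0$ and $\theta_\beta^\pm = \pm 1$ for $p_1$, and let the hypotheses $b_{\beta,\gamma} = b_{\alpha+\beta,\gamma}$, $c_\beta = c_{\alpha+\beta}$ force the cancellations that shrink $p_1^+ \star p_1^-$ to $\{\lambda\}$ and collapse $\lambda \star p_1^\epsilon$ to $\{p_1^{-\epsilon}\}$, then verify the grading by inspection) is exactly the computation that the cited reference carries out; your sample calculations, e.g.\ $w_\beta^+ w_\beta^- = c_\beta(t_{\alpha+\beta} - t_\beta) \in A_\lambda$ and the vanishing of the cross terms $w_\beta^+ w_\gamma^-$, check out.
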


\begin{remark}
We note that although there are three parts which are non-trivially graded, one of them is distinguishable from the rest.  Indeed, $A_\lambda$ is $1$-dimensional, whereas both $A_{p_1^+}$ and $A_{p_1^-}$ have dimension $\frac{|C|}{4}  \gneqq 1$.
\end{remark}

If $|\alpha \cap \beta| = 1$, then $\xi_\beta = 0$ and $\theta^\pm_\beta = \pm 1$.  So, $A_{p_1^+}$ is spanned by $w^+_\beta = e^\beta + e^{\alpha+\beta}$ and $A_{p_1^-}$ is spanned by $w^-_\beta = -e^\beta + e^{\alpha + \beta}$.

We write $T := \mathbb{Z}_2 \times \mathbb{Z}_2$.  Recall that we have a map $\tau \colon X \times T^* \to \Aut(A)$ where $\tau_a(\chi)$ acts by
\[
v \mapsto \chi(t) v
\]
for $v \in A_t$.  The non-trivial characters can be labelled by non-trivial elements of $T$ and they are given by
\[
\chi_t : s \mapsto \begin{cases} 1 & \mbox{if } s = 1_T, t \\
-1 & \mbox{otherwise} \end{cases}
\]

In light of the above Proposition \ref{Z2xZ2}, we abuse notation and label our non-trivial characters by $\chi_+$, $\chi_-$ and $\chi_\lambda$ corresponding to the grading of those parts.

\begin{lemma}
Let $e_\pm := \lambda t_\alpha \pm \mu e^\alpha$.  Then, for all $\chi \in T^*$
\[
\tau_{ e_+}(\chi) = \tau_{ e_-}(\chi)
\]
\end{lemma}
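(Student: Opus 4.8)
The plan is to show that the $\mathbb{Z}_2\times\mathbb{Z}_2$-graded decomposition $A = A_{(0,0)}\oplus A_{(1,0)}\oplus A_{(0,1)}\oplus A_{(1,1)}$ of Proposition~\ref{Z2xZ2} is unchanged when $\mu$ is replaced by $-\mu$; since $\tau_{e_\pm}(\chi)$ is by definition the linear map acting on $A_t$ as the scalar $\chi(t)$, the equality $\tau_{e_+}(\chi)=\tau_{e_-}(\chi)$ for every $\chi\in T^*$ is then immediate. As a warm-up I would note that Corollary~\ref{plusminussame} already gives $\tau_{e_+}(\chi_\lambda)=\tau_{e_-}(\chi_\lambda)$ (this is the standard Miyamoto involution $\tau_\alpha$), and since $\chi_+\chi_-=\chi_\lambda$ and $\chi\mapsto\tau_e(\chi)$ is a homomorphism, it is enough to treat the single character $\chi_+$ (the $\chi_-$ case then follows on multiplying by $\tau_\alpha$, and $\chi=1$ is trivial).

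First I would dispose of the graded pieces not involving $\mu$. The degree-$(1,1)$ part $A_{(1,1)}=A_\lambda=\langle t_j-t_k : j,k\in\supp(\alpha)\rangle$ and the summand $A_0=\langle t_i : i\notin\supp(\alpha)\rangle\oplus\langle e^{\alpha^c}\rangle$ of $A_{(0,0)}$ are visibly independent of the sign of $\mu$. For the rest of $A_{(0,0)}$: the two-dimensional subspace $A_1\oplus A_{\lambda-\frac12}$ equals $\langle t_\alpha, e^\alpha\rangle$ for both $e_+$ and $e_-$, because the relevant $2\times2$ transition matrix has determinant $\lambda(2\lambda-3)\neq0$ by the Axis Hypothesis ($a\neq\tfrac{1}{3|\alpha|}$) and this determinant is unchanged under $\mu\mapsto-\mu$; and $A_{p_0^+}\oplus A_{p_0^-}=\langle e^\gamma : \gamma\in C^*\setminus\{\alpha,\alpha^c\},\ |\alpha\cap\gamma|\in\{0,2\}\rangle$ for both signs, since on each plane $\langle e^\gamma, e^{\alpha+\gamma}\rangle$ the idempotent $e_\pm$ has the two distinct eigenvalues $\nu^+_{p_0}\neq\nu^-_{p_0}$ (Field Hypothesis), so the two corresponding one-dimensional parts together span the plane regardless of the sign. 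Hence $A_{(0,0)}$ is the same for $e_+$ and $e_-$, and so is its complement $A_{(1,0)}\oplus A_{(0,1)}=\langle e^\beta : |\alpha\cap\beta|=1\rangle$.

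It remains to see that the splitting of $W:=\langle e^\beta : |\alpha\cap\beta|=1\rangle$ into its degree-$(1,0)$ and degree-$(0,1)$ parts is the same for $e_+$ and $e_-$. When $|\alpha\cap\beta|=1$ we have $\xi_\beta=0$ and $\theta^\pm_\beta=\pm1$, so on each plane $\langle e^\beta, e^{\alpha+\beta}\rangle$ the two parts are the lines $\langle e^\beta+e^{\alpha+\beta}\rangle$ and $\langle -e^\beta+e^{\alpha+\beta}\rangle$, the first placed in degree $(1,0)$ and the second in degree $(0,1)$ (as in the paragraph preceding the statement). Replacing $\mu$ by $-\mu$ multiplies each $\theta^\pm_\beta$ by $-1$, which merely interchanges the labels $p_1^+$ and $p_1^-$ of these two lines; it changes neither the lines themselves nor which one sits in degree $(1,0)$. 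Concretely, for $\chi_+$ one checks that both $\tau_{e_+}(\chi_+)$ and $\tau_{e_-}(\chi_+)$ fix $\langle e^\beta+e^{\alpha+\beta}\rangle$ and negate $\langle -e^\beta+e^{\alpha+\beta}\rangle$ and $A_\lambda$, hence both act on $A$ by swapping $t_j\leftrightarrow t_k$ on $\supp(\alpha)$ and $e^\beta\leftrightarrow e^{\alpha+\beta}$ whenever $|\alpha\cap\beta|=1$, and fixing every other standard basis vector. Thus all four graded components agree for $e_+$ and $e_-$, and $\tau_{e_+}(\chi)=\tau_{e_-}(\chi)$ for all $\chi$. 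The one step I expect to need the most care is precisely this last point: the bookkeeping of the $p_1^+/p_1^-$ labels under $\mu\mapsto-\mu$, checking that the relabelling of the two lines is exactly cancelled by the assignment of $\mathbb{Z}_2\times\mathbb{Z}_2$-degrees, so that the induced decomposition of $A$ is genuinely unchanged.
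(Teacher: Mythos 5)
Your proof is correct and follows essentially the same route as the paper's: the paper's (much shorter) argument simply observes that the non-trivially graded parts $A_\lambda = \langle t_i - t_j \rangle$ and $A_{p_1^\pm} = \langle \pm e^\beta + e^{\alpha+\beta} \rangle$ do not depend on the sign of $\mu$, so the induced $\mathbb{Z}_2\times\mathbb{Z}_2$-decomposition, and hence every $\tau_{e_\pm}(\chi)$, is the same for both axes. The delicate point you flag --- that negating $\mu$ swaps the formal labels $\theta^\pm_\beta$ but not the two lines, and that one fixes the convention so that $\langle e^\beta + e^{\alpha+\beta}\rangle$ is the $p_1^+$-part for both axes --- is resolved in the paper exactly as you resolve it, via the explicit identification of $A_{p_1^\pm}$ in the paragraph preceding the lemma.
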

\begin{proof}
Note that $e_{\alpha, +}$ and $e_{\alpha, -}$ differ just by the parity of their coefficient $\mu$ of $e^\alpha$.  Observe that $A_\lambda = \langle t_i - t_j \rangle$, where $\supp(\alpha) = \{ i,j\}$, does not depend on $\mu$.  Similarly, $A_{p_1^\pm} = \langle \pm e^\beta + e^{\alpha + \beta} \rangle$ which do not depend on $\mu$.
\end{proof}

In particular, just as in the standard case, the $\tau$-map given by $a \mapsto \tau_a(\chi)$ for any $\chi \in T^*$ is not a bijection.  Similarly to the standard case, we will write $\tau_\alpha(\chi)$ for $\tau_{e_\pm}(\chi)$, where $e_\pm = \lambda t_\alpha \pm \mu e^\alpha$.

\begin{lemma}\label{Z2extraact}
Let $\supp(\alpha) = \{ i,j\}$.  The action of $\tau_{\alpha}( \chi_\pm)$ is given by
\begin{align*}
\tau_{\alpha}(\chi_\pm) : t_i &\mapsto t_j \\
t_j & \mapsto t_i \\
t_k& \mapsto t_k \qquad \mbox{for } k \notin \supp(\alpha) \\
e^\beta &\mapsto \begin{cases}
e^\beta & \mbox{if } \beta \in C_{\alpha}(0) \\
\pm e^{\alpha+\beta} & \mbox{if } \beta \in C_{\alpha}(1)
\end{cases}
\end{align*}
\end{lemma}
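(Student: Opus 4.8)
The plan is to obtain the action of $\tau_\alpha(\chi_\pm)$ one part at a time from the $\mathbb{Z}_2\times\mathbb{Z}_2$-grading of Proposition \ref{Z2xZ2}, and then translate this into an action on the special basis by re-expressing each $t_i$ and $e^\beta$ in terms of the part-homogeneous vectors listed in Table \ref{tab:esp}. By the preceding lemma, $\tau_\alpha(\chi)$ is independent of the choice between $e_{\alpha,+}$ and $e_{\alpha,-}$, so throughout we may compute with $e = \lambda t_\alpha + \mu e^\alpha$. Recall from Proposition \ref{Z2xZ2} that $A_{p_1^+}$, $A_{p_1^-}$ and $A_\lambda$ are the non-trivially graded parts, sitting in grades $(1,0)$, $(0,1)$ and $(1,1)$ respectively, while every other part sits in grade $(0,0)$; hence $\tau_\alpha(\chi_+)$ fixes everything except $A_{p_1^-}$ and $A_\lambda$, which it negates, and $\tau_\alpha(\chi_-)$ fixes everything except $A_{p_1^+}$ and $A_\lambda$, which it negates.

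For the toral elements, write $\supp(\alpha) = \{i,j\}$. From Table \ref{tab:esp}, $t_\alpha = t_i + t_j$ lies in $A_1 \oplus A_{\lambda-\frac{1}{2}}$ and so is fixed by both automorphisms, whereas $t_i - t_j$ spans $A_\lambda$ and so is negated by both; adding and subtracting these two relations gives $t_i \leftrightarrow t_j$. For $k \notin \supp(\alpha)$ we have $t_k \in A_0$, which is fixed. Similarly $e^\alpha \in A_1$ and, if $\alpha^c \in C^*$, $e^{\alpha^c}\in A_0$ are fixed. If $\beta \in C_\alpha(0)$, then $w^+_\beta$ and $w^-_\beta$ span the same plane as $e^\beta$ and $e^{\alpha+\beta}$ (the transition matrix is invertible since $\theta^+_\beta \neq \theta^-_\beta$ by the Field Hypothesis), and both $w^\pm_\beta$ lie in $A_{p_0^\pm}$, a grade-$(0,0)$ part; hence $e^\beta$ and $e^{\alpha+\beta}$ are fixed.

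The remaining case $\beta \in C_\alpha(1)$ carries the only real content. Since $|\alpha| = 2$, the condition $|\alpha\cap\beta| = 1$ forces $\xi_\beta = 0$ (directly from the formula for $\xi_\beta$), so $\theta^\pm_\beta = \pm 1$, giving $w^+_\beta = e^\beta + e^{\alpha+\beta} \in A_{p_1^+}$ and $w^-_\beta = -e^\beta + e^{\alpha+\beta} \in A_{p_1^-}$. Applying $\tau_\alpha(\chi_+)$ fixes $w^+_\beta$ and negates $w^-_\beta$; applying $\tau_\alpha(\chi_-)$ negates $w^+_\beta$ and fixes $w^-_\beta$. Using $e^\beta = \tfrac{1}{2}(w^+_\beta - w^-_\beta)$ and $e^{\alpha+\beta} = \tfrac{1}{2}(w^+_\beta + w^-_\beta)$, one reads off $\tau_\alpha(\chi_\pm)\colon e^\beta \mapsto \pm e^{\alpha+\beta}$ and $e^{\alpha+\beta}\mapsto \pm e^\beta$, as required. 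I expect no genuine obstacle here: once Proposition \ref{Z2xZ2} has pinned down which parts receive which grade, everything reduces to $2\times 2$ linear algebra, and the only thing to watch is the sign bookkeeping distinguishing $\chi_+$ from $\chi_-$.
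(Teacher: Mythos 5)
Your proposal is correct and follows essentially the same route as the paper: read off from the $\mathbb{Z}_2\times\mathbb{Z}_2$-grading of Proposition \ref{Z2xZ2} which parts $\tau_\alpha(\chi_\pm)$ negates, then convert to the special basis via $t_i\pm t_j$ and $e^\beta = \tfrac{1}{2}(w^+_\beta - w^-_\beta)$; you merely spell out the grade-$(0,0)$ cases that the paper leaves implicit. The only blemish is the claim ``$e^\alpha \in A_1$'' --- in fact $e^\alpha$ lies in $A_1 \oplus A_{\lambda-\frac{1}{2}}$, not in $A_1$ alone --- but since both parts sit in grade $(0,0)$ the conclusion that $e^\alpha$ is fixed is unaffected.
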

\begin{proof}
Note that $\tau_{\alpha}( \chi_\pm)$ inverts the $\lambda$-part, which is spanned by $t_i - t_j$, and the $p_1^{\mp}$-part and fixes all the other parts.  For $\beta \in C_\alpha(1)$, by writing $e^\beta = \frac{1}{2}\left( (e^\beta + e^{\alpha + \beta}) - (-e^\beta + e^{\alpha+\beta})\right)  = \frac{1}{2} ( w^+_\beta - w^-_\beta)$ and applying $\tau_{\alpha}( \chi_\pm)$, we see the result follows.
\end{proof}

 It is easy to see from Lemma \ref{Z2extraact} that $\tau_\alpha( \chi_+)$ acts as the involution $(i,j)$ in the natural action, where $\supp(\alpha) = \{ i,j\}$.  Hence $\langle \tau_\alpha(\chi_+) : \alpha \in S \rangle \cong \Aut(C)$.  From \cite[Lemma $3.6$]{codealgebras}, $\Aut(C)$ acts on $A$ in the natural action if and only the structure constants are regular.  That is, $a_{i,\alpha} = a_{i^g, \alpha^g}$, $b_{\alpha, \beta} = b_{\alpha^g, \beta^g}$ and $c_{i,\alpha} = c_{i^g, \alpha^g}$ for all $g \in \Aut(C)$, $i \in \supp(\alpha)$, $\alpha, \beta \in C^*$, $\beta \neq \alpha, \alpha^c$.

\begin{corollary}
 The group $\langle \tau_\alpha(\chi_+) : \alpha \in S \rangle = \Aut(C)$ and the structure constants in $A$ are in fact regular.
\end{corollary}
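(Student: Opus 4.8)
The plan is to combine the preceding lemma computing the action of $\tau_\alpha(\chi_+)$ with the characterisation of regular structure constants from \cite[Lemma 3.6]{codealgebras}. First I would observe that Lemma \ref{Z2extraact} shows that on the special basis $\tau_\alpha(\chi_+)$ acts exactly by permuting $t_i \leftrightarrow t_j$, fixing the other $t_k$, and sending $e^\beta$ to $e^{(i,j)\beta}$ (noting that $e^\beta \mapsto e^\beta$ when $\beta \in C_\alpha(0)$, which is precisely the case $\beta$ is fixed by $(i,j)$, and $e^\beta \mapsto e^{\alpha+\beta} = e^{(i,j)\beta}$ when $\beta \in C_\alpha(1)$). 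Here $\supp(\alpha)=\{i,j\}$, so $\alpha + \beta$ is the image of $\beta$ under the transposition $(i,j) \in S_n$. Thus $\tau_\alpha(\chi_+)$ coincides with the natural action of the transposition $(i,j)$, which is automatically an element of $\Aut(C)$ since $\alpha \in S \subseteq C$ forces $(i,j)$ to preserve $C$ (more carefully: because $\tau_\alpha(\chi_+)$ is an algebra automorphism, it must permute the toral idempotents, and since it fixes the special basis up to the permutation $(i,j)$, that permutation preserves the multiplicative — hence additive — structure of $\{e^\beta : \beta \in C\}$, so $(i,j) \in \Aut(C)$).

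Next I would argue that the transpositions $(i,j)$ for $\{i,j\} = \supp(\alpha)$, $\alpha \in S$, generate all of $\Aut(C)$. Since $|\alpha| = 2$ and $C = \bigoplus_{i=1}^r C_i$ with each $C_i$ an even weight code of length $m \geq 3$, and since $C = \langle S \rangle$, the supports $\supp(\alpha)$ for $\alpha \in S$ must, within each block $C_i$, connect all $m$ coordinates; more precisely, the graph on the coordinates with an edge for each $\supp(\alpha)$ is connected on each block. The full even weight code of length $m$ has automorphism group $S_m$, and an even weight code inside it has automorphism group a subgroup of $S_m$ containing all the transpositions $(i,j)$ with $\{i,j\} = \supp(\alpha)$, $\alpha \in S$; since $C = \langle S\rangle$, every automorphism of $C$ is a product of such transpositions (one can see this directly: $\Aut(C)$ is generated by the transpositions it contains when $C$ is spanned by weight-$2$ words). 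Hence $\langle \tau_\alpha(\chi_+) : \alpha \in S\rangle = \Aut(C)$ acting naturally on $A$.

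Finally, having shown $\Aut(C)$ acts on $A$ in the natural action via the $\tau_\alpha(\chi_+)$, I would invoke \cite[Lemma 3.6]{codealgebras}: the natural action of $\Aut(C)$ on $A$ consists of algebra automorphisms if and only if the structure constants are regular, i.e.\ $a_{i,\alpha} = a_{i^g, \alpha^g}$, $b_{\alpha,\beta} = b_{\alpha^g, \beta^g}$, $c_{i,\alpha} = c_{i^g, \alpha^g}$ for all $g \in \Aut(C)$. Since each $\tau_\alpha(\chi_+)$ is a genuine automorphism of $A$ (being a Miyamoto automorphism of the $\mathbb{Z}_2 \times \mathbb{Z}_2$-grading) and they generate $\Aut(C)$ acting naturally, the ``if and only if'' forces the structure constants to be regular.

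The main obstacle I anticipate is the group-generation step: verifying carefully that the transpositions $(i,j)$ with $\{i,j\} \in \{\supp(\alpha) : \alpha \in S\}$ really do generate all of $\Aut(C)$, and not merely a proper subgroup. This needs the hypothesis $C = \langle S \rangle$ together with the structure of $C$ as a direct sum of even weight codes: one must check that $S$ connects the coordinates within each block (so the transpositions generate at least $S_m$ on each block), and that no automorphism of $C$ mixes blocks or does anything outside the group generated — the latter following because $\Aut(C)$ of a code spanned by weight-$2$ codewords is itself generated by transpositions within the ``support graph'' components. Everything else is a direct appeal to Lemma \ref{Z2extraact} and \cite[Lemma 3.6]{codealgebras}.
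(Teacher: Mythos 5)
Your proposal is correct and follows essentially the same route as the paper: Lemma \ref{Z2extraact} identifies $\tau_\alpha(\chi_+)$ with the natural action of the transposition on $\supp(\alpha)$, these transpositions generate $\Aut(C)$, and \cite[Lemma 3.6]{codealgebras} then forces the structure constants to be regular because each $\tau_\alpha(\chi_+)$ is a genuine algebra automorphism acting naturally. The one caveat --- which the paper's own one-line justification shares --- is that for $r \geq 2$ the within-block transpositions generate only $\prod_{i=1}^r S_m$ while $\Aut(C)$ also contains block-swapping permutations, so the generation claim is literally exact only in the irreducible case to which the paper immediately specialises.
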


We begin by considering the case where $C$ is an irreducible code. So, $\Aut(C) = \langle \tau_\alpha( \chi_+) : \alpha \in S \rangle \cong S_n$.   Note that $\tau_\alpha(\chi_\lambda)$ are just the Miyamoto automorphisms from the standard case.  Hence, by Corollary \ref{Nmod},
\[
N := \la \tau_\alpha(\chi_\lambda) : \alpha \in S \ra \cong
\begin{cases}
2^{n-1} & \mbox{if $n$ is odd}\\
2^{n-2} & \mbox{if $n$ is even}
\end{cases}
\]

\begin{lemma}
For axes $a,b \in X$ and $t \in T$, we have
\[
\tau_a(\lambda)^{\tau_b( \chi_t)} = \tau_{a^{\tau_b( \chi_t)}}( \chi_\lambda)
\]
\end{lemma}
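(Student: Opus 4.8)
The plan is to obtain this as an instance of the general equivariance of Miyamoto automorphisms under the automorphism group: for \emph{any} $g \in \Aut(A)$, any $(\mathcal{F},\phi)$-axis $a$ whose fusion law is $T$-graded, and any $\chi \in T^*$, one has $\tau_a(\chi)^g = \tau_{a^g}(\chi)$. Granting this, I would simply take $g = \tau_b(\chi_t)$, which lies in $\Aut(A)$ by the construction of Miyamoto automorphisms in Section \ref{sec:axial}, and $\chi = \chi_\lambda$ (the character of $T$ attached to the $\lambda$-part, which on the left is written $\tau_a(\lambda)$); since $a^g$ is again an $(\mathcal{F},\phi)$-axis, being the image of one under an automorphism, $\tau_{a^g}(\chi_\lambda)$ is defined and the claimed identity is exactly the specialisation.

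To prove the general statement, I would first record how $g$ transports the relevant decomposition. Because $g$ is an algebra automorphism, $a^g u = \big(a\,u^{g^{-1}}\big)^g$, so $g$ carries each part of $a$ to the corresponding part of $a^g$, i.e.\ $A_x(a^g) = A_x(a)^g$ for every $x \in \mathcal{F}$; taking unions over $\mathcal{F}_s$ gives $A_s(a^g) = A_s(a)^g$ for each $s \in T$. I would also note here that the grading partition $\{\mathcal{F}_s : s \in T\}$ and hence the character $\chi_\lambda \in T^*$ are intrinsic to the fusion law $\mathcal{F}$ (Table \ref{tab:al2special}), so $\chi_\lambda$ is the same whether computed with respect to $a$ or with respect to $a^g$.

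Next I would verify the operator identity on each graded summand. Fix $s \in T$ and $u \in A_s(a^g)$, and write $u = w^g$ with $w \in A_s(a)$. Then
\[
u^{\tau_a(\chi_\lambda)^g} = \big((u^{g^{-1}})^{\tau_a(\chi_\lambda)}\big)^g = \big(\chi_\lambda(s)\, w\big)^g = \chi_\lambda(s)\, u = u^{\tau_{a^g}(\chi_\lambda)}.
\]
Since $A = \bigoplus_{s \in T} A_s(a^g)$, the two automorphisms agree on all of $A$, giving $\tau_a(\chi_\lambda)^g = \tau_{a^g}(\chi_\lambda)$ and therefore the lemma.

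I do not expect a genuine obstacle here; the content is routine, and the only points needing care are bookkeeping ones: that $g = \tau_b(\chi_t)$ really is an element of $\Aut(A)$ (so that $a^g$ is an axis and the decompositions transport cleanly), and that the label $\lambda$ — i.e.\ the character $\chi_\lambda$ — depends only on the fixed $T$-graded fusion law and not on the chosen axis. Both are immediate from the set-up of Sections \ref{sec:axial} and \ref{sec:Z2}. It is worth remarking that the identity holds for arbitrary $g \in \Aut(A)$; the special shape $g = \tau_b(\chi_t)$ is recorded precisely because it is what lets one recognise the Miyamoto group in the $\mathbb{Z}_2 \times \mathbb{Z}_2$-graded case as built from $N$ and $\Aut(C)$ via a semidirect-product structure.
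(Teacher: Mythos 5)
Your argument is correct and is precisely the ``straightforward calculation'' the paper leaves to the reader: the general equivariance $\tau_a(\chi)^g = \tau_{a^g}(\chi)$ for any $g \in \Aut(A)$, proved by transporting the graded decomposition via $A_s(a^g) = A_s(a)^g$, specialised to $g = \tau_b(\chi_t)$. Your added remarks --- that $\chi_\lambda$ is intrinsic to the fusion law rather than to the axis, and that $a^g$ carries a transported decomposition making $\tau_{a^g}(\chi_\lambda)$ well-defined even when $a^g \notin X$ --- are exactly the bookkeeping points that make the calculation go through.
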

\begin{proof}
This is a straightforward calculation.
\end{proof}

\begin{corollary}
$N$ is a normal subgroup of the Miyamoto group $G(X)$.
\end{corollary}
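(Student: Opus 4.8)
The plan is to exploit the decomposition $G(X)=\langle\Aut(C),N\rangle$ and reduce the whole statement to showing that $\Aut(C)$ normalises $N$. Continuing to assume $C$ is irreducible (the reducible case is recovered blockwise at the end), the Miyamoto group $G(X)$ is generated by the automorphisms $\tau_b(\chi)$ with $b\in X$ and $\chi\in T^*$; since $\tau_{e_{\alpha,+}}(\chi)=\tau_{e_{\alpha,-}}(\chi)=\tau_\alpha(\chi)$ and $\tau_\alpha(\chi_-)=\tau_\alpha(\chi_+)\,\tau_\alpha(\chi_\lambda)$ (because $\chi\mapsto\tau_\alpha(\chi)$ is a homomorphism and $\chi_-=\chi_+\chi_\lambda$ in $T^*$), the group $G(X)$ is already generated by the $\tau_\alpha(\chi_+)$ and the $\tau_\alpha(\chi_\lambda)$, $\alpha\in S$, that is, by $\Aut(C)=\langle\tau_\alpha(\chi_+):\alpha\in S\rangle$ and $N=\langle\tau_\alpha(\chi_\lambda):\alpha\in S\rangle$. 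As every subgroup normalises itself, it therefore suffices to check that each $\sigma\in\Aut(C)$ normalises $N$.

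This is where the preceding lemma does the work. By the corollary identifying $\langle\tau_\alpha(\chi_+):\alpha\in S\rangle$ with $\Aut(C)$ and forcing the structure constants to be regular, $\Aut(C)$ acts on $A$ in the natural action, so that $(e_{\alpha,\pm})^{\sigma}=e_{\alpha^\sigma,\pm}$ for every $\sigma\in\Aut(C)$ and $\alpha\in S$; moreover $S$, being the set of weight-two codewords of $C$, is $\Aut(C)$-stable, so $\alpha^\sigma\in S$. The preceding lemma, together with $(e_{\alpha,\pm})^{\sigma}=e_{\alpha^\sigma,\pm}$ and the fact that $\tau_{e_{\alpha^\sigma,+}}(\chi_\lambda)=\tau_{e_{\alpha^\sigma,-}}(\chi_\lambda)$, then yields
\[
\tau_\alpha(\chi_\lambda)^{\sigma}=\tau_{(e_{\alpha,\pm})^{\sigma}}(\chi_\lambda)=\tau_{\alpha^\sigma}(\chi_\lambda)\in N.
\]
Since the $\tau_\alpha(\chi_\lambda)$, $\alpha\in S$, generate $N$, this shows $\sigma$ normalises $N$; as $G(X)=\langle\Aut(C),N\rangle$, we conclude that $N$ is normal in $G(X)$.

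\textbf{Main obstacle.} There is essentially no obstacle here: the preceding lemma is tailored to produce exactly this conclusion, and the corollary is little more than its formal consequence. The only points needing care are the bookkeeping — that $G(X)$ is generated by $\Aut(C)$ together with $N$ (so that conjugating by elements of $\Aut(C)$ is enough), and that the $\Aut(C)$-action on $A$ in play is the natural one, so that $\sigma$ sends $e_{\alpha,\pm}$ to $e_{\alpha^\sigma,\pm}$ with $\alpha^\sigma$ again in $S$. For the general (reducible) code $C=\bigoplus_{i=1}^r C_i$, one assembles everything blockwise, exactly as in the proof of Corollary~\ref{Nmod}.
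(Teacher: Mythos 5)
Your proof is correct and takes essentially the same route as the paper: the corollary is an immediate consequence of the preceding conjugation lemma $\tau_a(\chi_\lambda)^{\tau_b(\chi_t)}=\tau_{a^{\tau_b(\chi_t)}}(\chi_\lambda)$, and your reduction to conjugation by the $\tau_\alpha(\chi_+)$ alone (via $\tau_\alpha(\chi_-)=\tau_\alpha(\chi_+)\tau_\alpha(\chi_\lambda)$) only streamlines the bookkeeping. One small caveat: the paper assumes only that $S$ is a generating set of weight-two codewords, not all of them, so $\alpha^\sigma$ need not lie in $S$; the conclusion $\tau_{\alpha^\sigma}(\chi_\lambda)\in N$ still holds because the linear-extension homomorphism $\psi$ in the proof of Corollary \ref{Nmod} shows $\tau_\gamma(\chi_\lambda)\in N$ for every weight-two $\gamma\in C$.
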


\begin{theorem}\label{Ceven_irred}
Let $C$ be an irreducible code, then $G(X) \cong 2^{n-1} : S_n$ if $n$ is odd and $G(X) \cong 2^{n-2} : S_n$ if $n$ is even.
\end{theorem}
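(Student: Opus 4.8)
The plan is to realise $G(X)$ as a semidirect product by exhibiting $N$ as a normal subgroup with complement the image of $\langle \tau_\alpha(\chi_+) : \alpha \in S\rangle \cong S_n$, and then checking the intersection is trivial. Concretely, $G(X)$ is generated by all $\tau_\alpha(\chi)$ for $\alpha \in S$ and $\chi \in T^*$; since $T^*$ has three nontrivial characters $\chi_+,\chi_-,\chi_\lambda$ and $\chi_- = \chi_+\chi_\lambda$ as characters, each $\tau_\alpha(\chi_-)$ lies in $\langle \tau_\alpha(\chi_+),\tau_\alpha(\chi_\lambda)\rangle$, so $G(X) = \langle \tau_\alpha(\chi_+),\tau_\alpha(\chi_\lambda) : \alpha\in S\rangle$. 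Write $H := \langle \tau_\alpha(\chi_+):\alpha\in S\rangle$; by the corollary preceding the theorem, $H = \Aut(C) \cong S_n$ (using irreducibility, so $C$ is an even weight code of length $n$ with full symmetric automorphism group). By the corollary above, $N = \langle \tau_\alpha(\chi_\lambda):\alpha\in S\rangle$ is normal in $G(X)$, and we computed $N \cong 2^{n-1}$ for $n$ odd, $2^{n-2}$ for $n$ even via Corollary \ref{Nmod}. Since $G(X) = NH$ and $N \trianglelefteq G(X)$, it remains to show $N \cap H = 1$, which gives $G(X) \cong N : H$ with the stated orders.

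The heart of the argument is therefore the triviality of $N \cap H$. For this I would use the action on the standard basis described in Lemma \ref{Z2extraact} and Proposition \ref{tauaction}. Every element of $H$ fixes the span $\langle t_\alpha : \alpha \in S \rangle$ setwise and in fact permutes the toral idempotents $t_1,\dots,t_n$ according to the corresponding permutation in $S_n$; an element of $N$, being a product of $\tau_\alpha(\chi_\lambda)$'s, fixes every $t_i$ (by Proposition \ref{tauaction}). Hence an element of $N\cap H$ must act trivially on all the $t_i$, so the corresponding permutation in $S_n$ is the identity, i.e.\ the element of $H$ is trivial (as $H \cong S_n$ acts faithfully by permuting the $t_i$). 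Thus $N \cap H = 1$. One small point to nail down: $H \cong S_n$ really does act faithfully on $\{t_1,\dots,t_n\}$, which follows since the natural action of $\Aut(C)$ on $A$ permutes the $t_i$ exactly as $\Aut(C)$ permutes coordinates, and this coordinate action is faithful.

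Finally, I would assemble: $G(X) = NH$, $N \trianglelefteq G(X)$, $N\cap H = 1$, so $G(X) = N \rtimes H \cong N : S_n$, and substituting the order of $N$ from Corollary \ref{Nmod} gives $2^{n-1}:S_n$ for $n$ odd and $2^{n-2}:S_n$ for $n$ even, as claimed. The main obstacle I anticipate is not any of these structural steps individually but making sure that $G(X)$ is genuinely generated by just the $\tau_\alpha(\chi_+)$ and $\tau_\alpha(\chi_\lambda)$ — i.e.\ that the third family $\tau_\alpha(\chi_-)$ contributes nothing new — which requires the identity $\tau_a(\chi\psi) = \tau_a(\chi)\tau_a(\psi)$ coming from the fact that $\chi \mapsto \tau_a(\chi)$ is a homomorphism $T^* \to \Aut(A)$, noted in Section \ref{sec:axial}; with $\chi_-\chi_+ = \chi_\lambda$ in $T^*$ this does the job. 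Everything else is bookkeeping with the explicit actions already established.
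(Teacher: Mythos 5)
Your proof is correct and follows essentially the same route as the paper: write $G(X)=NH$ with $H=\langle\tau_\alpha(\chi_+)\rangle\cong\Aut(C)\cong S_n$ using $\tau_e(\chi_-)=\tau_e(\chi_\lambda)\tau_e(\chi_+)$, invoke normality of $N$ and Corollary \ref{Nmod} for its order, and check $N\cap H=1$. The only difference is that you spell out the triviality of $N\cap H$ (elements of $N$ fix every $t_i$ while $H$ acts faithfully by permuting them), which the paper dismisses as clear.
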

\begin{proof}
From the definition of the $\tau$ map, $\tau_e( \chi_-) = \tau_e( \chi_\lambda) \tau_e( \chi_+)$ and so $G(X)$ is generated by just $\tau_e( \chi_\lambda)$ and $\tau_e( \chi_+)$, for $e \in X$.  It is clear that $\langle \tau_e( \chi_+) : e \in X \rangle = \Aut(C) \cong S_n$ intersects $N$ trivially, hence the result follows.
\end{proof}

We may now consider the general case, where $C$ is not irreducible.  

\begin{corollary}
Let $C = \bigoplus_{i =1}^r C_i$ be a sum of even weight codes $C_i$, all of length $m \geq 3$.  Then,
\[
G(X) = \begin{cases}
\prod_{i = 1}^r 2^{m-1}:S_m & \mbox{if $m$ is odd}\\
\prod_{i = 1}^r 2^{m-2}:S_m & \mbox{if $m$ is even}
\end{cases}
\]
\end{corollary}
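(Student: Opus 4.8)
The plan is to reduce everything to the irreducible case already settled in Theorem~\ref{Ceven_irred}, exploiting the block structure of $C = \bigoplus_{i=1}^r C_i$. First I would observe that, since every $\alpha\in S$ has $|\alpha|=2$ and each $C_i$ is an even weight code, any weight-two codeword of $C$ has its support inside a single block; hence $S$ partitions as $S=\bigsqcup_{i=1}^r S_i$ with $S_i:=S\cap C_i$, and $\langle S_i\rangle = C_i$. Writing $X_i$ for the pair-closed set of small idempotents attached to $S_i$ and $G(X_i):=\langle \tau_\alpha(\chi): \alpha\in S_i,\ \chi\in T^*\rangle$, the goal becomes to prove $G(X)\cong\prod_{i=1}^r G(X_i)$ and then quote Theorem~\ref{Ceven_irred} for each factor. (If some $C_i$ is not irreducible one refines the decomposition further, exactly as in the proof of Corollary~\ref{Nmod}; I only need the $C_i$ irreducible, of length $m$, for the final identification.)

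Next I would use the explicit descriptions in Proposition~\ref{tauaction} and Lemma~\ref{Z2extraact}: a generator $\tau_\alpha(\chi)$ with $\alpha\in S_i$ fixes every toral element $t_k$ with $k$ outside the $i$th block, and acts on a codeword element $e^\beta$ only through the $i$th block-component of $\beta$ (either fixing $e^\beta$, or sending $\beta^{(i)}\mapsto\beta^{(i)}+\alpha$ up to a sign according to $|\alpha\cap\beta|$). Consequently generators from distinct blocks modify disjoint pieces of data and commute, so $G(X)=G(X_1)\cdots G(X_r)$; moreover every element of $\langle G(X_j):j\neq i\rangle$ fixes all block-$i$ data while every element of $G(X_i)$ fixes all data outside block $i$, so an element of the intersection fixes a generating set of $A_C$ and is trivial. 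This yields the direct product $G(X)\cong\prod_{i=1}^r G(X_i)$. Restricting $G(X_i)$ to the span of the block-$i$ data then recovers precisely the Miyamoto group action of the irreducible length-$m$ code algebra $A_{C_i}$, so Theorem~\ref{Ceven_irred} gives $G(X_i)\cong 2^{m-1}:S_m$ for $m$ odd and $2^{m-2}:S_m$ for $m$ even, and the stated formula follows.

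The step I expect to be the main obstacle is the last piece of bookkeeping: checking that $G(X_i)$, as a group of automorphisms of the big algebra $A_C$, is faithfully and compatibly identified with the Miyamoto group of $A_{C_i}$. Two small points need care. First, the multiplication in $A_C$ restricted to block-$i$ data is not literally that of the standalone code algebra $A_{C_i}$, since the annihilation rule $e^\beta e^{\beta^c}=0$ refers to $\1\in C$ rather than to $\1_{C_i}$; so I would argue directly from the action formulas, which depend only on the intersection numbers $|\alpha\cap\beta|$ and on permutations of supports, that the natural map $G(X_i)\to\Aut(A_{C_i})$ is a well-defined isomorphism onto the Miyamoto group. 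Second, I must confirm that $G(X_i)$ already acts faithfully on the block-$i$ data inside $A_C$: this holds because $\tau_\alpha(\chi)$ with $\alpha\in S_i$ alters $e^\beta$ only in its $i$th component, so any element of $G(X_i)$ fixing all pure block-$i$ elements fixes every $e^\beta$. With these checks in place, the reduction to Theorem~\ref{Ceven_irred} is complete.
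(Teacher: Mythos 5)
Your proposal is correct and follows essentially the same route as the paper: split $S$ into blocks, check via the explicit action formulas that Miyamoto generators from distinct blocks commute, and reduce each factor to Theorem~\ref{Ceven_irred}. The only (minor) divergence is in upgrading the central product to a direct product: you argue directly that each $G(X_i)$ meets the product of the others trivially because their common elements fix a generating set of $A_C$, whereas the paper simply observes that the factors $2^{m-1}{:}S_m$ and $2^{m-2}{:}S_m$ are centre-free; both arguments are valid.
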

\begin{proof}
Since $C = \bigoplus_{i =1}^r C_i$, each $\alpha \in S$ is contained in some $C_i$. Define $X_i$ as the set of axes in $X$ which come from codewords in $C_i$.  We claim that $G(X)$ is a central product of $G(X_i)$.

For $i \neq j$, let $\tau_i(\chi)$ and $\tau_j(\psi)$ be the Miyamoto automorphisms associated to an axis in $X_i$ and in $X_j$ coming from codewords $\alpha_i$ and $\alpha_j$ respectively.  Since $i \neq j$, it is clear that their actions commute on the toral elements and codewords elements $e^\beta$ such that $\beta \in C_{\alpha_i}(0) \cap C_{\alpha_j}(0)$.

Since $\alpha_i$ and $\alpha_j$ are disjoint, $\beta \in C_{\alpha_j}(1)$ if and only if $\alpha_i + \beta \in C_{\alpha_j}(1)$.  Since $e^\beta \mapsto -e^\beta$ commutes with $e^\beta \mapsto \pm e^{\alpha_i + \beta}$ and addition of codewords is commutative, we see that $\tau_i(\chi)$ and $\tau_j(\psi)$ commute and hence the claim follows.

Since each $C_i$ is irreducible, by Theorem \ref{Ceven_irred}, $G(X_i)$ is isomorphic to either $2^{m-1}:S_m$, or $2^{m-2}:S_m$.  Since these are centre-free, the result follows.
\end{proof}

\end{document}